\newcommand{\E}{{\mathbb E}}
\newcommand{\F}{{\mathbb F}}
\renewcommand{\P}{{\mathbb P}}
\newcommand{\Q}{{\mathbb Q}}
\newcommand{\R}{{\mathbb R}}
\renewcommand{\S}{{\mathbb S}}
\newcommand{\N}{{\mathbb N}}
\newcommand{\Acal}{{\mathcal A}}
\newcommand{\Mid}{{\ \Big|\ }}
\newcommand{\vertiii}[1]{{\left\vert\kern-0.25ex\left\vert\kern-0.25ex\left\vert #1 
    \right\vert\kern-0.25ex\right\vert\kern-0.25ex\right\vert}}
\newcommand{\V}{\mathrm{Var}}
\newcommand{\Fc}{{\mathcal F}}
\newcommand{\id}{{\rm Id}}
\DeclareMathOperator{\tr}{tr}
\newtheorem{theorem}{Theorem}
\newtheorem{corollary}[theorem]{Corollary}
\newtheorem{definition}[theorem]{Definition}
\newtheorem{lemma}[theorem]{Lemma}
\newtheorem{remark}[theorem]{Remark}
\theoremstyle{definition}
\newtheorem{example}[theorem]{Example}
\numberwithin{equation}{section}
\numberwithin{theorem}{section}
\definecolor{darkgreen}{rgb}{0,0.7,0}
\DeclareMathOperator{\diag}{diag}
\newcommand{\iii}{{\vert\kern-0.25ex\vert\kern-0.25ex\vert}}
\newcommand{\bec}[1]{\begin{equation} \begin{cases} #1\end{cases} \end{equation}}
\newcommand{\bes}[1]{\begin{equation} \begin{split} #1\end{split} \end{equation}}
\renewcommand{\c}{\alpha}
\newcommand{\T}{\top}
\newcommand{\cmark}{\ding{51}}%
\newcommand{\xmark}{\ding{55}}%
\definecolor{blue0}{RGB}{0,77,153} 
\definecolor{red0}{RGB}{179,0,77} 
\definecolor{green0}{RGB}{134,219,76} 
\definecolor{gray0}{RGB}{84,97,110}
\begin{document}

\title{Markowitz portfolio selection for multivariate affine and quadratic Volterra models}
\author{Eduardo ABI JABER\footnote{Universit\'e Paris 1 Panth\'eon-Sorbonne, Centre d'Economie de la Sorbonne, 106, Boulevard de l'H\^opital, 75013 Paris, \sf  eduardo.abi-jaber at univ-paris1.fr.} \and Enzo MILLER\footnote{Universit\'e de Paris and Sorbonne Universit\'e, Laboratoire de Probabilit\'es, Statistique et Mod\'elisation (LPSM, UMR CNRS 8001), 
Building Sophie Germain, Avenue de France, 75013 Paris,  \sf  enzo.miller at polytechnique.org}  
\and Huy\^en PHAM\footnote{Universit\'e de Paris and Sorbonne Universit\'e, Laboratoire de Probabilit\'es, Statistique et Mod\'elisation (LPSM, UMR CNRS 8001),  
Building Sophie Germain, Avenue de France, 75013 Paris, \sf pham at lpsm.paris 
}
}

\maketitle

\begin{abstract}
This paper concerns portfolio selection with  multiple assets  under rough co\-variance matrix. 
We investigate  the continuous-time Markowitz mean-variance problem for a multivariate class of  affine and quadratic Volterra models.  
In this incomplete non-Markovian and non-semimartingale market framework with unbounded random coe\-fficients, 
the optimal portfolio strategy is expressed by means of  a Riccati backward stochastic differential equation (BSDE).  In the case of affine Volterra models, we derive explicit solutions to this BSDE in terms of  multi-dimensional Riccati-Volterra equations. This framework includes multivariate rough Hes\-ton models and extends the results of \cite{han2019mean}. In the quadratic case, we obtain new   analytic formulae for the the Riccati BSDE and we establish their  link with  infinite dimensional Riccati equations. This covers rough Stein-Stein and Wishart type  co\-variance models.  
Numerical results on a two dimensional rough Stein-Stein model  illustrate the impact of rough volatilities and stochastic correlations on the optimal Markowitz strategy. In particular for positively correlated assets, we find that the optimal strategy in our model is a   `buy rough sell smooth' one. 
\end{abstract}

\vspace{5mm}

\noindent {\bf Keywords:} Mean-variance portfolio theory; rough volatility; correlation matrices; multidimensional Volterra process; Riccati equations; non-Markovian Heston, Stein--Stein and Wishart models.

\vspace{5mm}

\noindent {\bf MSC Classification:} 93E20, 60G22, 60H10.

\newpage


\section{Introduction}

The \cite{mar52} mean-variance portfolio selection problem  is the cornerstone of mo\-dern portfolio allocation theory. Investment decisions rules are made according to a trade-off  between  return and risk, and the use of Markowitz efficient portfolio strategies in the financial industry has become quite popular mainly due to its natural and intuitive formulation. A vast volume of research has been devoted over the last decades to extend Markowitz problem from static to continuous-time setting, first in Black-Scholes and complete markets (\cite{zhouli00}), and then to consider more general frameworks with random coefficients and multiple assets, see e.g. \cite{lim2004quadratic}, \cite{chiu2014mean}, or more recently 
\cite{ismpha19}  for taking into account model uncertainty on the assets correlation.

In the direction of more realistic modeling of asset prices,  it is now well-established  that volatility is rough \citep{gatetal18}, modeled by fractional Brownian motion with small Hurst parameter, which captures  empirical facts of times series of realized volatility and key features of  implied volatility surface, see \citet{alos2007short, fukasawa2011asymptotic}.  Subsequently, an important literature has focused on option pricing and asymptotics  in rough volatility models.  In comparison, the research on portfolio optimization in fractional and rough models is still little developed but has gained an increasing  attention with the recent papers of \cite{fouhu18,bau18,han2019merton}, which consider  fractional Ornstein-Uhlenbeck and Heston stochastic volatility models  for power utility function criterion, and the work by 
\cite{han2019mean}  where the authors study  the Markowitz problem  in a Volterra Heston model, which covers the  rough Heston model of \citet{EER:07}.

Most of the developments in rough volatility literature for asset modeling, option pricing or portfolio selection have been carried out in the mono-asset case. However, investment in multi-assets by taking into account the correlation risk is an importance feature in portfolio choice in financial markets, see \cite{buretal10}. 
Inspired by the recent papers \citet{jaber2019laplace,AJLP17,cuchiero2019markovian,rostho19}  that consider  multivariate versions of rough Volterra volatility models, the basic goal of this paper is to enrich the literature on portfolio selection:
\begin{itemize}
\item[(i)]  by introducing a  class of multivariate Volterra models, which captures stylized facts of financial assets, namely various rough volatility patterns across assets, (possibly random) correlation between stocks, and leverage effects, i.e., correlation between a stock and its volatility. 
\item[(ii)] by keeping  the model tractable for explicit computations of the optimal Markowitz portfolio strategy, which can be a quite challenging task in multivariate non-Markovian settings. 
\end{itemize}

\noindent {\bf Main contributions.} In this paper, we study the continuous-time Markowitz  problem in a multivariate setting with a focus on two classes: (i) affine Volterra models as in 
\cite{AJLP17} that include multivariate rough Heston models, (ii)  quadratic Volterra models, which are new class of Volterra models, and embrace multivariate rough Stein-Stein models, and 
rough Wishart type covariance matrix  models, in the spirit of  \citet{jaber2019laplace,cuchiero2019markovian}. 
We provide:
\begin{itemize}
\item  \textbf{A generic verification result} for the corresponding mean-variance problem, which is formulated in an incomplete non-Markovian and non-semimartingale framework with unbounded random coefficients of the volatility and market price of risk, and under general filtration. This result expresses the solution to the Markowitz problem in terms of a Riccati backward stochastic differential equation (BSDE) by checking in particular the admissibility condition  of the optimal control. We stress that related existing verification results   in the literature  
(see \cite{lim2004quadratic},  \cite{jeamansch12},  \cite{chiu2014mean}, \cite{shen2015mean}) 
cannot be applied directly  to our setting, and we shall 
discuss more in detail this point in Section \ref{secverif}.  
\item \textbf{Explicit solutions}  to the Riccati BSDE in two concrete specifications of multivariate Volterra models exploiting the representation of the solution in terms of a Laplace transform:
\begin{enumerate}
	\item 
\textbf{the affine case}: the optimal Markowitz strategy is expressed in terms of multivariate Riccati-Volterra equations which naturally extends the one obtained in \cite{han2019mean}.   
We point out that the martingale {distortion} arguments used in \cite{han2019mean} for  the univariate Volterra Heston model, do not  apply  in higher dimensions, unless the correlation structure is highly degenerate.
\item  \textbf{the quadratic case:} our major result is to derive analytic expressions for the optimal investment stra\-tegy  by  explicitly solving operator Riccati equations. This gives new explicit formulae for rough Stein-Stein and Wishart type co\-variance models. These analytic expressions can be efficiently implemented: the integral operators can be approximated by  closed form expressions involving finite dimensional matrices  and the underlying processes can be simulated by the celebrated Cholesky decomposition algorithm.
\end{enumerate}
\item \textbf{Numerical simulations} of the optimal Markowitz strategy in a two-asset rough Stein-Stein model to illustrate our results.\footnote{The code of our implementation  can be found at  the following \href{https://colab.research.google.com/drive/1P_SYE3WgFgwUKpOo8uCBDdIC04XyxE2a?usp=sharing}{link}.} {We depict the impact of some parameters onto the optimal investment when one asset is rough, and the other smooth (in the sense of the Hurst index of their volatility),  and show in particular that for positively correlated assets, the optimal strategy is to 
``buy rough, sell smooth", which is consistent with the empirical backtesting in \citet{glasserman2020buy}. }
\end{itemize} 
 
 \noindent {\bf Outline of the paper.} The rest of the paper is organized as follows: Section \ref{secform} formulates the financial market model and the mean-variance problem in a multivariate setting with random covariance matrix and market price of risk, and defines the general correlation structure.  We state in Section \ref{secverif} our generic verification result, which can be seen 
as unifying framework for previous results obtained in related literature. Section \ref{S:affine} is devoted to affine Volterra models {where} we derive {an explicit expression} for the optimal Markowitz strategy. In Section \ref{S:quadratic}, we consider the class of quadratic Volterra models, and we show how to solve the infinite-dimensional Riccati equations that appear in the closed-form  expressions of the optimal portfolio. Numerical illustrations on the behavior of the optimal investment in a two-asset rough Stein-Stein model are given in Section \ref{secnum}.  Finally, the proof of the verification result and other technical lemmas  are postponed to the Appendices.

 \vspace{2mm}

\noindent \textbf{Notations.}
Given a probability space $(\Omega,\Fc,\P)$ and a  filtration $\F$ $=$ $(\Fc_t)_{t \geq 0}$ satisfying the usual conditions, we denote by 
\bes{
     L^{\infty}_{\F}([0,T], \R^d) &= \left\{ Y:\Omega \times [0,T]\mapsto \R^d, \; \F-\text{prog.~measurable and bounded a.s.} \right\} \\
     L^p_{\F}([0,T], \R^d) &= \left\{ Y:\Omega \times [0,T]\mapsto \R^d, \; \F-\text{prog.~measurable s.t.~} \E\Big[ \int_0^T |Y_s|^p ds \Big] < \infty   \right\} \\
     {\S^{\infty}_{\F}([0,T], \R^d)} &= \left\{ Y:\Omega \times [0,T]\mapsto \R^d, \; \F-\text{prog.~measurable s.t.~} \sup_{t\leq T} |Y_t(w)|< \infty \mbox{ a.s.} \right\}. }
Here $|\cdot|$ denotes the Euclidian norm on $\R^d$.  Classically, for $p \in \llbracket1, \infty  \rrbracket$, we define $L^{p, loc}_{\F}([0,T], \R^d)$ as the set of progressive 
processes $Y$ for which there exists a sequence of increasing stopping times 
$\tau_n \uparrow \infty$ such that the stopped processes $Y^{\tau_n}$ are in $L^{p}_{\F}([0,T], \R^d)$ for every $n \geq 1$, and we recall that it consists of all progressive processes $Y$ s.t. 
$ \int_0^T |Y_t|^p dt$ $<$ $\infty$, a.s. To unclutter notation, we write $L^{p, loc}_{\F}([0,T])$  instead of $L^{p, loc}_{\F}([0,T], \R^d)$ when 
the context is clear.

\section{Formulation of the problem} \label{secform} 

Fix $T > 0$, $d,{N}\in \N$. We  consider a financial market on $[0,T]$  on some filtered probability space $(\Omega,\Fc,\F:=(\Fc_t)_{t \geq 0},\P)$ with a non--risky asset  $S^0$ 
\begin{align*}
dS^0_t = S^0_t r(t) dt,
\end{align*}
with a deterministic short rate $r:\R_+ \to \R$, and  $d$ risky assets with dynamics
\begin{align}
    \label{eq:stocks}
    dS_t = \diag(S_t) \big[ \big( r(t) {\bold{1}_d} + \sigma_t \lambda_t  \big)dt + \sigma_t dB_t \big],
\end{align}
driven by a $d$-dimensional Brownian motion $B$, with a $d\times d$-matrix valued  stochastic volatility process $\sigma$ and a $\R^d$-valued continuous stochastic process $\lambda$, 
called {\it market price of risk}.  Here ${\bold{1}_d}$ denotes the vector in $\R^d$ with all components equal to $1$. 
The market is typically incomplete, in the sense that the dynamics of the continuous volatility process $\sigma$ is driven by an $N$-dimensional process $W$ $=$ $(W^1,\ldots,W^N)^\T$ 
defined by:
\begin{align}\label{eq:correlstructuregen}
W^k_t =  C_k^\top  B_t + \sqrt{1-C_k^\T C_k} B^{\perp,k}_t, \quad k=1,\ldots,N,
\end{align}
where $C_k \in \R^{d}$ s.t. $C_k^\T C_k$ $\leq$ $1$,  and $B^{\perp}$ $=$ $(B^{\perp,1},\ldots,B^{\perp,N})^\T$ is an $N$--dimensional Brownian motion independent of $B$. 
Note that $d\langle W^k \rangle_t = dt$  but $W^k$ and $W^j$ can be correlated, hence $W$ is not necessarily a Brownian motion. Observe that 
processes $\lambda$ and $\sigma$ are $\F$-adapted, possibly unbounded,  but not necessarily adapted to the filtration generated by $W$.  We point out that $\F$ may be strictly larger than the augmented filtration generated by $B$ and $B^{\perp}$ as we shall deal with weak solutions to stochastic Volterra equations.

\begin{remark} \label{remlambdasigma} 
In our applications, we will be chiefly interested in the case where $\lambda_t$ is linear in $\sigma_t$, and where the dynamics of the matrix-valued process    
$\sigma$ is governed by a Volterra equation of the form 
\begin{align}
    \label{eq:SIGMA}
    \sigma_t = g_0(t)+ \int_0^t \mu(t,s,\omega) ds +  \int_0^t \chi(t,s,\omega) dW_s. 
\end{align}
The class of models that we shall develop in Sections \ref{S:affine} and \ref{S:quadratic} includes in particular the case of  Volterra Heston model when $d$ $=$ $1$ with 
$\lambda_t$ $=$ $\theta\sigma_t$, for some constant $\theta$, as studied in  \cite{han2019mean},  and the case of Wishart process for the covariance matrix process $V_t$ $=$ 
$\sigma_t\sigma_t^\top$, as studied in \cite{chiu2014mean}. 
The class of models that we will develop in Sections \ref{S:affine} and \ref{S:quadratic} includes  in particular the case of 
\begin{enumerate}
    \item 
    multivariate Volterra Heston models based on Volterra square-root processes, see \citet[Section 6]{AJLP17}, we refer to \citet{rostho19} for a microstuctural foundation. When $d$ $=$ $1$, we recover the results of  \cite{han2019mean}, which cover the case of the rough Heston model of \citet{EER:06}.
    \item
    multivariate Volterra Stein-Stein and Wishart  type in the sense of \citet{jaber2019laplace}, where the {instantaneous} covariance is given by squares of Gaussians. Under the Markovian setting, we recover a similar structure as in the results of  \cite{chiu2014mean}.
\end{enumerate}  
\end{remark}

\vspace{1mm}

\noindent \textbf{Mean-variance optimization problem.} 
Let $\pi_t$ denote the vector of the amounts invested in the risky assets $S$ at time $t$ in a self--financing strategy and set $\alpha = \sigma^\top \pi$. Then, the dynamics of the wealth $X^{\alpha}$ of the portfolio we seek to optimize is given by 
\begin{align}
\label{eq:wealth}
dX^{\alpha}_t &= \big( r(t) X^{\alpha}_t  + \alpha_t^\T \lambda_t \big) dt + \alpha_t^\T dB_t, \quad t \geq 0, \quad X_0^\alpha = x_0 \in \R. 
\end{align} 
By a solution to \eqref{eq:wealth}, we mean an $\F$-adapted continuous process $X^{\alpha}$ satisfying \eqref{eq:wealth} on $[0,T]$ $\P$-a.s. and such that 
\begin{align}
\label{eq:estimateX}
 \E\big[\sup_{t\leq T} |X^{\alpha}_t|^2 \big] &< \;  \infty.
\end{align}
The set of admissible investment strategies is naturally defined by
$$\mathcal A  = \{ \alpha \in {L^{2,loc}_{\F}([0,T], \R^d)} \mbox{ such that \eqref{eq:wealth} has a  solution satisfying } \eqref{eq:estimateX} \}.$$


The Markowitz portfolio selection problem in continuous-time consists in solving the following constrained problem
\begin{align} \label{optimization_problem}
     V(m) &  := \; \inf_{\substack{\c \in \Acal}}\big\{ \V (X_T):   \text{s.t. } \E[X_T] = m \big\}. 
\end{align}
given some expected return value $m$ $\in$ $\R$, where $ \V (X_T)$ $=$ $\E\big[\big( X_T-\E[ X_T ]\big)^2\big]$ stands for the variance. 

\section{A generic verification result} \label{secverif} 

In this section, we establish a generic verification result for the optimization problem \eqref{optimization_problem} given the solution of a certain Riccati BSDE. We stress that 
our mean-variance problem deals with  incomplete markets with unbounded random coefficients $\sigma$ and $\lambda$, so that existing results cannot be applied directly to our setting: 
\citet{lim2004quadratic} presents a general methodology to solve the MV problem for the wealth process \eqref{eq:wealth} in an incomplete market without assuming any particular dynamics on 
 $\sigma$  nor that the excess return is proportional to $\sigma$. However, a nondegeneracy assumption is made on $\sigma\sigma^\T$, see \citet[Assumption (A.1)]{lim2004quadratic}. 
The main verification  result in \citet[Proposition 3.3]{lim2004quadratic}, based on a completion of squares argument, states that if a solution to a certain (nonlinear) Riccati BSDE exists, then the MV is solvable. The difficulty resides in proving the existence of solutions to such nonlinear BSDEs (see also \citet{lim2002mean} for similar results  in  complete markets). \\

Here, we assume that the excess return is proportional to $\sigma$ (instead of the nondegeneracy condition) and state a verification result in terms of solutions of Riccati BSDEs (completion of squares, ie LQ problem with random coefficients).   A verification result depending on the solution of a Riccati BSDE  is also stated in \cite{chiu2014mean}, but the admissibility of the optimal candidate control is not proved. We also mention the paper of \cite{jeamansch12} where the authors adopt a BSDE approach for general semimartingales, but focusing on situations in which the existence of an optimal strategy is assumed.  In our case, the existence of an admissible optimal control is obtained under a suitable exponential integrability assumption involving the market price of risk and the $Z$ components of the BSDE, which extends the condition in \cite{shen2015mean}.

Our main result of this section, Theorem~\ref{T:verif} below, can be seen as unifying framework for the aforementioned results, refer to Table \ref{tablecomparison}. For the sake of presentation, we postpone its proof to Appendix~\ref{A:verifresult}.

{\begin{table}[h!]
	\centering  
	\resizebox{\textwidth}{!}{\begin{tabular}{c c  c c c} 
		\hline
		 &   Random coef. & Unbounded coef. & degenerate $\sigma$ & Incomplete market \\
		\hline \hline                 
\citet{lim2002mean} & \textcolor{blue0}{\cmark} & \textcolor{red0}{\xmark} & \textcolor{red0}{\xmark}   & \textcolor{red0}{\xmark}  \\
	\citet{lim2004quadratic} & \textcolor{blue0}{\cmark} & \textcolor{red0}{\xmark} & \textcolor{red0}{\xmark}   & \textcolor{blue0}{\cmark}  \\
\citet{shen2015mean} 	 & \textcolor{blue0}{\cmark} & \textcolor{blue0}{\cmark} & \textcolor{red0}{\xmark}   & \textcolor{red0}{\xmark}  \\
Theorem~\ref{T:verif} & \textcolor{blue0}{\cmark} & \textcolor{blue0}{\cmark} & \textcolor{blue0}{\cmark}   & \textcolor{blue0}{\cmark}  \\
			\hline 
		\end{tabular}}
	\caption{Comparison to existing verification results for mean-variance problems. }
	\label{tablecomparison} 
\end{table}}

We define $C\in \R^{N\times d}$  by
\bes{C &= \left(C_1, \ldots, C_N \right)^\T, \label{eq:CR}}
where we recall that the vectors $C_i \in \R^d$ come from the correlation structure \eqref{eq:correlstructuregen}.
We will use the matrix norm $|A| = \tr(A^\T A)$  in the subsequent theorem.

\begin{theorem}
\label{T:verif}
Assume that  there exists a solution triplet $ (\Gamma, Z^1, Z^2) \in \S^{\infty}_{\F}([0,T], \R)$   $\times L^{2, loc}_{\F}([0,T], \R^d) \times L^{2, loc}_{\F}([0,T], \R^N) $ to the  Riccati BSDE 
\begin{equation}
    \label{eq:riccati_sto}
 \left\{
 \begin{array}{ccl}   
  d \Gamma_t &=& \Gamma_t \Big[ \big(-2r(t) +   \left|\lambda_t +  Z^1_t + C Z^2_t \right|^2 \big)dt +\left(Z^1_t\right)^\T dB_t + \left(Z^2_t\right)^\T  dW_t \Big], \\
   \Gamma_T &=& 1,
 \end{array}
 \right.
\end{equation}   
such that 
\begin{enumerate}
    \item[(H1)] \label{T:verif:i}
    {$0<\Gamma_0 < e^{2 \int_0^T r(s) ds}$,} and  $\Gamma_t>0$, for all $t\leq T$,
\item[(H2)] \label{T:verif:ii}
  \bes{
    \label{eq:assumption_novikov}
        \E \Big[ \exp\Big( a(p)\int_0^T \big(  |\lambda_s|^2 + \left|Z^1_s\right|^2 + \left|Z^2_s\right|^2 \big)ds \Big) \Big] \;< \;  \infty,
    }
    for some $p > 2$ and a constant $a(p)$ given by 
   {  \bes{a(p)=\max & \Big[p \left(3 + |C| \right),   {3 (8p^2 {- 2p}) \left( 1  + |{C}|^2  \right)}  \Big]. \label{eq:constap}
    }}
      \end{enumerate}
Then, the optimal investment strategy for the Markowitz problem \eqref{optimization_problem} is  given by the admissible control 
\bes{
    \label{eq:optimal_control_final}
   \c^*_t \; = \;  - \big(\lambda_t +  Z^1_t + C Z^2_t\big) \big(X_t^{\alpha^*}  - \xi^* e^{-\int_t^T r(s) ds}\big), 
}
where  
\bes{
\label{eq:eta_star}
\xi^* \; = \;  \frac{ m - \Gamma_0 e^{-\int_0^T r(t) dt} x_0}{1- \Gamma_0 e^{-2 \int_0^T r(t) dt}}. 
} 
Furthermore, the value of \eqref{optimization_problem} for the optimal wealth process $X^*$ $=$ $X^{\alpha^*}$ is 
\bes{
     \label{eq:value_final}
  V(m) \; = \;    \V (X_T^*) \; = \;  \Gamma_0 \frac{\big|x_0 - m e^{-\int_0^T r(t) dt} \big|^2}{1 - \Gamma_0 e^{-2\int_0^T r(t) dt}}.
}
\end{theorem}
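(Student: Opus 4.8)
The plan is to follow the classical completion-of-squares / linear-quadratic approach for mean-variance problems, adapted to the incomplete non-Markovian setting. First I would handle the constraint $\E[X_T]=m$ by Lagrangian relaxation: introduce a multiplier and, for fixed $\xi\in\R$, study the unconstrained auxiliary problem of minimizing $\E\big[(X_T^\alpha-\xi)^2\big]$ over $\alpha\in\Acal$. The link between the two problems is standard — one shows $\V(X_T)=\inf_\xi\E[(X_T-\xi)^2]-(\E[X_T]-\xi)^2$ type manipulations, or more directly uses the well-known reduction that the Markowitz optimizer is obtained by minimizing $\E[(X_T-\xi)^2]$ and then choosing $\xi=\xi^*$ so that the resulting $\E[X_T]=m$; the value $\xi^*$ in \eqref{eq:eta_star} should drop out of this calibration once the auxiliary value function is known.

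Next, for the auxiliary problem, I would guess the value process in the quadratic form $\Gamma_t\big(X_t^\alpha - \xi e^{-\int_t^T r(s)ds}\big)^2$, where $\Gamma$ solves the Riccati BSDE \eqref{eq:riccati_sto}. Writing $Y_t := X_t^\alpha-\xi e^{-\int_t^Tr(s)ds}$, which has dynamics $dY_t=(r(t)Y_t+\alpha_t^\T\lambda_t)dt+\alpha_t^\T dB_t$, I would apply Itô's formula to $\Gamma_t Y_t^2$ using \eqref{eq:riccati_sto}. The drift terms should, after grouping, produce a perfect square in $\alpha_t$: the cross terms $\Gamma_t Z^1_t$ from $d\langle\Gamma,Y^2\rangle$ combine with $\Gamma_t\lambda_t$ and the quadratic penalty $\Gamma_t|\lambda_t+Z^1_t+CZ^2_t|^2 Y_t^2$ so that the local-martingale-plus-nonnegative-drift structure forces
\[
d\big(\Gamma_t Y_t^2\big) = \Gamma_t\,\big|\alpha_t + (\lambda_t+Z^1_t+CZ^2_t)Y_t\big|^2\,dt + d(\text{local martingale}),
\]
using $d\langle W^k,B\rangle_t = C_k\,dt$ from \eqref{eq:correlstructuregen} to handle the $Z^2$-$B$ cross variation. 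This identifies the optimal feedback $\alpha^*_t=-(\lambda_t+Z^1_t+CZ^2_t)Y_t$, i.e. \eqref{eq:optimal_control_final}.

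The main obstacle — and the step I would spend the most care on — is rigor rather than formal computation: showing the local martingale is a true martingale (or handling it via localization and a limiting argument to get the right inequality direction), and, crucially, verifying that the candidate $\alpha^*$ is actually \emph{admissible}, i.e. that $\alpha^*\in L^{2,loc}_\F$, that the closed-loop SDE for $X^{\alpha^*}$ has a solution, and that $\E[\sup_{t\le T}|X^{\alpha^*}_t|^2]<\infty$ as required by \eqref{eq:estimateX}. This is exactly where hypotheses (H1)–(H2) enter: $\Gamma$ bounded and positive controls the quadratic form, while the exponential integrability (H2) with the explicit constant $a(p)$ is what is needed for a Novikov/John–Nirenberg-type argument to get $L^p$ bounds on the stochastic exponential defining $X^{\alpha^*}$ and on $\alpha^*$ — this is the substitute for the usual boundedness assumptions and the reason the paper emphasizes it cannot cite existing verification theorems verbatim. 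Once admissibility is secured, optimality follows: for any admissible $\alpha$, integrating the Itô formula and taking expectations gives $\E[(X_T^\alpha-\xi)^2]\ge \Gamma_0 Y_0^2$ with equality at $\alpha^*$; then plugging $Y_0=x_0-\xi e^{-\int_0^T r}$, optimizing/calibrating over $\xi$ to meet $\E[X_T]=m$ yields $\xi^*$ in \eqref{eq:eta_star} and, after simplification, the value formula \eqref{eq:value_final}. The condition $\Gamma_0 e^{-2\int_0^T r}<1$ in (H1) is precisely what makes the denominator in \eqref{eq:eta_star}–\eqref{eq:value_final} positive and the relaxation non-degenerate.
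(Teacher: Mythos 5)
Your plan matches the paper's own proof essentially step for step: the Lagrangian/max--min reduction to the auxiliary problem $\min_\alpha \E[(X_T^\alpha-\xi)^2]$, the completion-of-squares Itô computation on $\Gamma_t\big(X_t^\alpha-\xi e^{-\int_t^T r(s)ds}\big)^2$ with localization, the use of (H2) with the constant $a(p)$ via Novikov/Doob/H\"older to prove admissibility of the closed-loop candidate, and the final calibration of $\xi^*$ where (H1) guarantees nondegeneracy (the paper phrases this as strict concavity of the outer quadratic in the multiplier $\eta$, which is equivalent to your calibration step). No gaps in the approach; it is the same argument as Lemmas A.1--A.2 and the concluding proof in Appendix A.
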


\begin{proof}
    We refer to Appendix~\ref{A:verifresult}.
\end{proof}

\vspace{1mm}

\begin{remark}
By setting $\tilde Z_t^i$ $=$ $\Gamma_t Z_t^i$, $i$ $=$ $1,2$, the BSDE \eqref{eq:riccati_sto} agrees with the one in  \citet[Theorem 3.1]{chiu2014mean}: 
\begin{align}
 d \Gamma_t &=\;  \Gamma_t \Big[ \Big(-2r(t) +   \big| \lambda_t +  \frac{\tilde Z^1_t  + C \tilde Z^2_t}{\Gamma_t} \big|^2 \Big) \Big] dt 
 + \big(\tilde Z^1_t\big)^\T dB_t + \big(\tilde Z^2_t\big)^\T  dW_t,
\end{align}
and justifies the terminology Riccati BSDE. 
\end{remark}

\vspace{1mm}

In the sequel, we will provide concrete specifications of multivariate stochastic Volterra models for which  the solution to the non-linear Riccati BSDE \eqref{eq:riccati_sto} can be computed in closed and semi-closed forms,  while satisfying conditions (H1) and (H2). 
The  key idea is to observe that, first,  if such solution exists, then, it  admits the following representation as a Laplace transform:
\bes{
    \Gamma_t \; = \;  \E \Big[ \exp\Big(\int_t^T \big(2r(s) -  \left|\lambda_s +  {Z}^1_s + C {Z}^2_s \right|^2 \big)ds \Big) \Mid \mathcal F_t\Big], \quad 0 \leq t \leq T. 
}
In the special case where $\lambda$ is deterministic, then the solution to \eqref{eq:riccati_sto} trivially exists with $Z^1$ $=$ $Z^2$ $=$ $0$, and condition (H1) and (H2) are obviously satisfied when $\lambda$ is nonzero and bounded. In the general case  where $\lambda$ is an (unbounded) stochastic process, the admissibility of the optimal control is obtained under finiteness of a certain exponential moment of the solution triplet $(\Gamma,Z^1,Z^2)$ and the risk premium $\lambda$ as precised in (H2). Such estimate is crucial to deal with the unbounded random coefficients in \eqref{eq:wealth}, see for instance \citet{han2019mean,shen2014mean, shen2015mean} where similar conditions appear.  If the coefficients are bounded, such condition is not needed, see \citet[Lemma 3.1]{lim2004quadratic}.

Our main interest  is to find specific dynamics for  the volatility $\sigma$ and for the market price of risk  $\lambda$ such that the Laplace transform can be computed in (semi)-explicit form.  
We shall consider models as mentioned in Remark \ref{remlambdasigma}, where all the randomness in $\lambda$ comes from the 
process $W$ driving $\sigma$, and for which we naturally expect that $Z^1$ $=$ $0$.  We  solve more specifically  this problem for two classes of models:
\begin{enumerate}
\item
Multivariate affine  Volterra  models of Heston type in Section \ref{S:affine}. This  extends the results of \citet{han2019mean} to the multi dimensional case and provides semi-closed formulas.
    \item
Multivariate quadratic Volterra  models of Stein-Stein and Wishart type in Section \ref{S:quadratic} for which we derive new closed-form solutions.
\end{enumerate}

\section{Multivariate affine Volterra models}\label{S:affine}

 We let $K=\diag(K_1,\ldots,K_d)$ be diagonal with scalar kernels $K_i\in L^{2}([0,T],\R)$ on the diagonal, $\nu=\diag(\nu_1,\ldots,\nu_d)$ and $D \in \R^{d\times d}$ such that 
\begin{equation} \label{sqrt2}
 D_{ij}\geq 0, \quad i\neq j.
\end{equation}

We assume that  $\sigma$ in \eqref{eq:SIGMA} is given by $\sigma = \sqrt{\diag(V)}$, where $V=(V^1,\ldots, V^d)^\top$  is the following $\R^d_+$--valued Volterra square--root process

\begin{equation} 
\label{VolSqrt}
\begin{aligned}
V_t = g_0(t) + \int_0^t K(t-s) D V_s ds  + \int_0^t K(t-s) \nu \sqrt{\diag(V_s)}dW_s.
\end{aligned}
\end{equation}
Here $g_0:\R_+\to \R^{d}_+$, $W$ is a $d$-dimensional Brownian motion and the correlation structure with $B$ is given by
{\begin{align}\label{eq:correstructureheston}
W^i = \rho_i B^i + \sqrt{1-\rho_i^2} B^{\perp,i}, \quad i=1,\ldots,d,
\end{align}
for some $(\rho_1,\ldots,\rho_d)\in[-1,1]^d$.}  This corresponds to a particular case of the correlation structure in \eqref{eq:correlstructuregen} with $N$ $=$ $d$, and $C_i$ $=$ 
$(0,\ldots,\rho_i,\ldots,0)^\T$. 
Furthermore,  the risk premium is assumed to be in the form
{$\lambda$ $=$ $\big(\theta_1\sqrt{V^1},\ldots,\theta_d \sqrt{V^d}\big)^\top$}, for some {$\theta_i \geq 0$},  so that the dynamics  for the stock prices \eqref{eq:stocks} reads
\begin{align}
    \label{eq:hestonS}
    dS^i_t = S^i_t \left( r(t)  + \theta_i V^i_t   \right) dt + S^i_t \sqrt{V^i_t} dB^i_t, \quad i=1,\ldots, d.
\end{align}

We assume that there exists a continuous $\R^{2d}_+$-valued weak solution $(V,S)$ to \eqref{VolSqrt}-\eqref{eq:hestonS} on some filtered probability space $(\Omega,\mathcal F, (\mathcal F)_{t\geq 0}, \mathbb P)$ such that 
\begin{align}\label{eq:moments V1}
    \sup_{t\leq T} \E\left[ |V_t|^p \right] < \infty, \quad p \geq 1.
\end{align}
For instance, weak existence of $V$ such that \eqref{eq:moments V1} holds  is established  under suitable assump\-tions on the kernel $K$ and specifications $g_0$ as shown in the following remark. The existence of $S$ readily follows from that of $V$.

\begin{remark}\label{R:existenceV}
Assume that, for each $i=1,\ldots,d$, $K_i$ is completely monotone on $(0,\infty)$\footnote{A function $f$ is completely monotone on   $(0,\infty)$ if it is infinitely differentiable on $(0,\infty)$ such that $(-1)^n f^n(t)\geq 0$, for all $n\geq 1$ and $t>0$.},and that there exists $\gamma_i\in(0,2]$ and $k_i>0$ such that
 \begin{align}\label{eq:kernelhcond}
      \int_0^h K_i^2(t)dt +  \int_0^T (K_i(t+h)-K_i(t))^2 dt  & \leq \;  k_i   h^{\gamma_i}, \quad h>0.
 \end{align}
 This covers, for instance,  constant non-negative kernels,  fractional kernels  of the form $t^{H-1/2}/\Gamma(H+1/2)$ with $H \in(0,\frac12]$, and exponentially decaying kernels ${\rm e}^{-\beta t}$ with $\beta>0$. Moreover, sums and products of completely monotone functions are completely monotone, refer to \citet{AJLP17} for more details.
\begin{itemize}
 \item
 If $g_0(t)=V_0 + \int_0^t K(t-s)b^0ds$, for some $V_0,b^0 \in \R^d_+$, then \citet[Theorem 6.1]{AJLP17} ensures the existence of $V$ such that \eqref{eq:moments V1} holds,
 \item
 In \citet{AJEE18b}, the existence is obtained  for more general input curves $g_0$ for the case $d=1$, the extension to the multi-dimensional setting is straightforward. 
   \end{itemize}
\end{remark}

 \vspace{1mm}

Exploiting the affine structure of \eqref{VolSqrt}-\eqref{eq:hestonS}, see \citet{AJLP17}, we provide an explicit solution to the Riccati BSDE \eqref{eq:riccati_sto} 
in terms of the Riccati-Volterra equation 
\begin{align}
\psi^i(t)&= \int_0^t K_i(t-s) F_i(\psi(s)) ds,  \label{eq:Riccatipsi1} \\
F_i(\psi) &= -\theta_i^2  - 2 \theta_i \rho_i \nu_i \psi^i + (D^\top \psi)_i + \frac {\nu_i^2} 2  ( 1- 2\rho_i^2)(\psi^i)^2, \quad i=1,\ldots,d, \label{eq:Riccatipsi2}
\end{align} 
and the $\R^d$-valued process 
\begin{align}\label{eq:processg}
 g_t(s)= g_0(s) + \int_0^t K(s-u) D V_u du + \int_0^t K(s-u)\nu \sqrt{\diag(V_u)}dW_u, \quad s\geq t.
\end{align}
One notes that for each, $s\leq T$, $(g_t(s))_{t\leq s}$ is the adjusted forward process 
\begin{align}
g_t(s) &= \; \mathbb E\Big[  V_s - \int_t^s K(s-u)DV_udu \Mid \Fc_t\Big].
\end{align}
 
\vspace{1mm} 

\begin{lemma}
\label{lemma:existence_riccati_sto}
Assume that there exists  a solution $\psi \in C([0,T],\R^d)$ to the Riccati-Volterra equation \eqref{eq:Riccatipsi1}-\eqref{eq:Riccatipsi2}. Let $\left(\Gamma, Z^1, Z^2\right)$ be defined as 
\begin{equation} \label{eq:GammaHeston}
\left\{
\begin{array}{ccl}
    \Gamma_t &=& \exp\Big( 2\int_t^T r(s) ds +  \sum_{i=1}^d\int_t^T  F_i(\psi(T-s)) g^i_t(s) ds \Big), \\
    Z^1_t &=& 0, \\
  Z^{2,i}_t &=&  \psi^i(T-t) \nu_i \sqrt{V^i_t}, \quad i=1,\ldots,d, \quad 0 \leq t \leq T, 
\end{array}  
\right.
\end{equation}
where $g$ $=$ $(g^1,\ldots,g^d)^\T$ is given by \eqref{eq:processg}.  Then, $\left(\Gamma, Z^1, Z^2\right)$  is a   $\S^{\infty}_{\F}([0,T], \R) \times L^2_{\F}([0,T], \R^d) \times L^2_{\F}([0,T], {\R^d})$-valued solution to  \eqref{eq:riccati_sto}.
\end{lemma}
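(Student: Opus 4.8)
The plan is to verify directly that the triplet defined in \eqref{eq:GammaHeston} solves the BSDE \eqref{eq:riccati_sto}, using the affine/Markovian structure of the Volterra square-root process together with the forward process $g_t(s)$. First I would record the key properties of $g_t(\cdot)$: for fixed $s$, the process $t\mapsto g_t(s)$ is a martingale on $[0,s]$ with dynamics $d_t g_t(s) = K(s-t)\nu\sqrt{\diag(V_t)}\,dW_t$, while $g_t(t) = V_t$ and $\partial$ of the drift-correction term recovers the Volterra dynamics \eqref{VolSqrt}. This lets me write $\log\Gamma_t = 2\int_t^T r(s)\,ds + \sum_i \int_t^T F_i(\psi(T-s))\, g^i_t(s)\,ds$ and differentiate in $t$. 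There are two contributions: the "boundary" term from differentiating the limits of integration, which produces $-\,2r(t)\,dt - \sum_i F_i(\psi(T-t))\,g^i_t(t)\,dt = -2r(t)\,dt - \sum_i F_i(\psi(T-t)) V^i_t\,dt$; and the term from the martingale dynamics of $g_t(s)$ inside the integral, namely $\sum_i \int_t^T F_i(\psi(T-s))\,K_i(s-t)\,ds\;\nu_i\sqrt{V^i_t}\,dW^i_t$. By the Riccati–Volterra equation \eqref{eq:Riccatipsi1}, $\int_t^T F_i(\psi(T-s)) K_i(s-t)\,ds = \int_0^{T-t} F_i(\psi(u)) K_i(T-t-u)\,du = \psi^i(T-t)$, so the $dW^i$-coefficient of $d\log\Gamma_t$ is exactly $\psi^i(T-t)\nu_i\sqrt{V^i_t} = Z^{2,i}_t$, matching the claimed $Z^2$ (and $Z^1=0$ because $g$ is driven only by $W$).

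Next I would apply Itô's formula to $\Gamma_t = \exp(\log\Gamma_t)$ to pass from the dynamics of $\log\Gamma$ to that of $\Gamma$; this adds the usual $\tfrac12 |Z^2_t|^2\,dt$ Itô correction, giving a drift of the form $\Gamma_t\big(-2r(t) - \sum_i F_i(\psi(T-t))V^i_t + \tfrac12\sum_i (\psi^i(T-t))^2\nu_i^2 V^i_t\big)\,dt$ plus the martingale part $\Gamma_t (Z^2_t)^\T dW_t$. It then remains to check the algebraic identity that this drift equals $\Gamma_t(-2r(t) + |\lambda_t + C Z^2_t|^2)\,dt$. Using $C_i = (0,\dots,\rho_i,\dots,0)^\T$ one computes $(C Z^2_t)$ componentwise and finds $\lambda^i_t + (CZ^2_t)^i = \theta_i\sqrt{V^i_t} + \rho_i\psi^i(T-t)\nu_i\sqrt{V^i_t}$, so $|\lambda_t + CZ^2_t|^2 = \sum_i (\theta_i + \rho_i\nu_i\psi^i(T-t))^2 V^i_t$. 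Expanding the square and matching against $-\sum_i F_i(\psi(T-t))V^i_t + \tfrac12\sum_i \nu_i^2(\psi^i(T-t))^2 V^i_t$ using the explicit form \eqref{eq:Riccatipsi2} of $F_i$ reduces to an identity in $\psi$; the $\theta_i^2$, the $2\theta_i\rho_i\nu_i\psi^i$, and the $\nu_i^2(\psi^i)^2$ terms all cancel, and I should be careful that the $(D^\top\psi)_i$ term in $F_i$ is cancelled by the corresponding drift contribution in $g_t(s)$ — more precisely, I would need to double-check whether the $(D^\top\psi)$ term is absorbed into the boundary term via the definition of $g_t(s)$ or whether it requires re-expressing $\int_t^T F_i(\psi(T-s)) (D V_s)$-type terms; this bookkeeping between the $DV$ drift in $g$ and the $(D^\top\psi)$ term in $F$ is the one spot demanding genuine care.

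Finally I would verify the claimed integrability/regularity: $\Gamma \in \S^\infty_{\F}$ follows since $\psi$ is continuous hence bounded on $[0,T]$, $F_i(\psi(\cdot))$ is bounded, $r$ is bounded, and $g^i_t(s) = \E[V^i_s - \int_t^s K_i(s-u)(DV_u)_i\,du\mid\Fc_t]$ has all moments uniformly bounded by \eqref{eq:moments V1} and the $L^2$ assumption on $K$ — so $\log\Gamma_t$ is bounded in $L^q$ for all $q$, and one upgrades to a.s. boundedness of $\sup_{t\le T}\Gamma_t$ using continuity of the exponential functional (or directly, boundedness of $g^i_t(s)$ via a Kolmogorov-type argument as in \citet{AJLP17}). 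For $Z^2 \in L^2_\F([0,T],\R^d)$ one just notes $\E\int_0^T |Z^{2,i}_t|^2\,dt = \int_0^T (\psi^i(T-t))^2\nu_i^2\,\E[V^i_t]\,dt < \infty$ again by \eqref{eq:moments V1}, and $Z^1 = 0 \in L^2_\F$ trivially. The terminal condition $\Gamma_T = 1$ is immediate from \eqref{eq:GammaHeston}. The main obstacle I anticipate is not any single hard estimate but rather making the formal differentiation of the stochastic Fubini expression $\int_t^T F_i(\psi(T-s)) g^i_t(s)\,ds$ rigorous — justifying the interchange of $d_t$ and $\int ds$, which requires a stochastic Fubini theorem applied to $g_t(s)$ and integrability of $(s,t,\omega)\mapsto F_i(\psi(T-s))K_i(s-t)\nu_i\sqrt{V^i_t}$, precisely where the $L^2$ hypothesis on the kernels $K_i$ and the moment bound \eqref{eq:moments V1} get used.
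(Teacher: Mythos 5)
Your overall strategy (differentiate $G_t=2\int_t^T r(s)\,ds+\sum_i\int_t^T F_i(\psi(T-s))g^i_t(s)\,ds$ via stochastic Fubini, apply It\^o to $\Gamma=e^{G}$, use the Riccati--Volterra equation to identify $Z^2$, and check the square identity) is the same as the paper's, but there is a concrete error at exactly the spot you flag as needing care, and as written it breaks the computation. The process $t\mapsto g_t(s)$ defined in \eqref{eq:processg} is \emph{not} a martingale: its dynamics are
\begin{equation}
d_t\, g_t(s) \;=\; K(s-t)\,D\,V_t\,dt \;+\; K(s-t)\,\nu\,\sqrt{\diag(V_t)}\,dW_t ,
\end{equation}
so dropping the drift $K(s-t)DV_t\,dt$ (as your ``martingale with dynamics $K(s-t)\nu\sqrt{\diag(V_t)}\,dW_t$'' does) loses a drift contribution in $dG_t$. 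Integrating that drift against $F_j(\psi(T-s))$ and using \eqref{eq:Riccatipsi1} produces $\sum_j\psi^j(T-t)\sum_i D_{ji}V^i_t=\sum_i (D^\top\psi(T-t))_i\,V^i_t$, and this is precisely what cancels the $(D^\top\psi)_i$ term sitting inside $-F_i(\psi(T-t))V^i_t$. Indeed, a direct expansion gives $-F_i(\psi)+\tfrac{\nu_i^2}{2}(\psi^i)^2=(\theta_i+\rho_i\nu_i\psi^i)^2-(D^\top\psi)_i$, so with your (driftless) dynamics the drift of $\Gamma$ would equal $\Gamma_t\big(-2r(t)+\sum_i V^i_t[(\theta_i+\rho_i\nu_i\psi^i(T-t))^2-(D^\top\psi(T-t))_i]\big)$, which does not match $\Gamma_t(-2r(t)+|\lambda_t+CZ^2_t|^2)$ unless $D=0$. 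You anticipated this bookkeeping issue but left it unresolved (``double-check whether the $(D^\top\psi)$ term is absorbed into the boundary term''); it is not absorbed in the boundary term $-\sum_iF_i(\psi(T-t))V^i_t$ — it comes from the $DV$ drift of $g$, which your stated dynamics omit. Fixing the dynamics of $g_t(s)$ closes the gap and then your algebra coincides with the paper's.

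A secondary remark on the regularity step: for $\Gamma\in\S^{\infty}_{\F}$ your continuity argument (a.s.\ finiteness of the sup once $\Gamma$ is shown to be a continuous It\^o process) is acceptable for the lemma as stated, but the paper instead proves that $M_t=\Gamma_t\exp\big(\int_t^T(-2r(s)+\sum_i V^i_s(\theta_i+\rho_i\nu_i\psi^i(T-s))^2)\,ds\big)$ is a true martingale (a multivariate adaptation of \citet[Lemma 7.3]{AJLP17}, proved in the appendix), which yields the representation \eqref{eq:ito_gamma} and the explicit bound $0<\Gamma_t\le e^{2\int_t^T r(s)\,ds}$; this stronger conclusion is what is later used to verify condition (H1) in the verification theorem, so you would eventually need it anyway. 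The $Z^2\in L^2_{\F}$ and $Z^1=0$ checks in your last paragraph match the paper.
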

\begin{proof} 
We first observe that the correlation structure \eqref{eq:correstructureheston} implies that $C$ in \eqref{eq:CR} is given by  $C=\diag(\rho_1,\ldots,\rho_d)$. 
Set 
$$G_t =  2\int_t^T r(s) ds +  \sum_{i=1}^d\int_t^T  F_i(\psi(T-s)) g^i_t(s) ds, \quad t \leq T.$$
Then, $\Gamma = \exp(G)$ and 
\begin{align}\label{eq:Gammaitoexp}
d\Gamma_t = \Gamma_t \Big( d G_t + \frac 1 2 d\langle G \rangle_t \Big).
\end{align}
Using  \eqref{eq:processg}, and by stochastic Fubini's theorem, see \citet[Theorem 2.2]{V:12}, the dynamics of $G$ reads as  
\begin{align}
dG_t &= \;  \Big(-2r(t)  - \sum_{i=1}^d F_i(\psi(T-t)) V^i_t + \sum_{j=1}^d \int_t^T F_j(\psi(T-s)){K_j}(s-t)ds \sum_{i=1}^d D_{ji} V^i_t\Big)dt\\
& \quad \quad + \;  \sum_{i=1}^d \int_t^T F_i(\psi(T-s)){K_i}(s-t)ds \nu_i \sqrt{V^i_t}dW^i_t \\
& = \;  \Big(-2r(t) - \sum_{i=1}^d F_i(\psi(T-t)) V^i_t + \sum_{j=1}^d \psi^j(T-t) \sum_{i=1}^d D_{ji} V^i_t\Big)dt\\
& \quad \quad + \;  \sum_{i=1}^d \psi^i(T-t)\nu_i \sqrt{V^i_t}dW^i_t,
\end{align}
where we changed variables and  used the Riccati--Volterra equation \eqref{eq:Riccatipsi1} for $\psi$ for the last equality. This yields that 
the dynamics of $\Gamma$ in \eqref{eq:Gammaitoexp} is  given by 
\begin{align}
    d\Gamma_t &= \;  \Gamma_t \Big(-2r(t) + \sum_{i=1}^d V^i_t \big( - F_i(\psi(T-t)) + \sum_{j=1}^d  D_{ji} \psi^j(T-t) + \frac {\nu_i^2} 2 (\psi^i(T-t))^2 \big)\Big)dt \\ 
    & \quad \quad  + \;  \Gamma_t \sum_{i=1}^d \psi^i(T-t)\nu_i \sqrt{V^i_t}dW^i_t \\
    & =  \;  \Gamma_t \Big[ \big(-2r(t)  + \sum_{i=1}^d V^i_t (\theta_i + \rho_i \nu_i \psi^i(T-t))^2\big)dt +(Z^{2}_t)^\top dW_t \Big],  \label{eq:gamma_heston}
\end{align}
where we used  \eqref{eq:Riccatipsi2} for the last identity. Finally, observing that  
\begin{align}
\left| \lambda_t + Z^1_t + C Z^2_t \right|^2  &= \;  \sum_{i=1}^d \left(\theta_i + \rho_i \nu_i\psi^i(T-t )\right)^2 V^i_t,
\end{align}
together with $\Gamma_T=1$, we get that $(\Gamma,Z^1,Z^2)$ as defined in \eqref{eq:GammaHeston} solves the BSDE \eqref{eq:riccati_sto}.

{It remains to show that $\left(\Gamma, Z^1, Z^2\right) \in \S^{\infty}_{\F}([0,T],\R) \times L^2_{\F}([0,T], \R^d) \times L^2_{\F}([0,T], \R^d)$}.  For this, define the process 
\begin{align}
M_t &= \;  \Gamma_t \exp\Big(\int_t^T\big( - 2r(s) + \sum_{i=1}^d V^i_s (\theta_i + \rho_i \nu_i \psi^i(T-s))^2\big) ds\Big), \quad t \leq T.
\end{align}
An application of It\^o's formula combined with the dynamics \eqref{eq:gamma_heston} shows that $dM_t = M_t  (Z^2_t)^\top dW_t$, and so $M$ is a local martingale of the form
\begin{align} 
M_t &= \; \mathcal E\Big( \int_t^T \sum_{i=1}^d \psi^i(T-s)\nu_i \sqrt{V^i_s}dW^i_s \Big). 
\end{align}
Since $\psi$ is continuous, it is bounded so that a straightforward adaptation of \citet[Lemma 7.3]{AJLP17} to the multi-dimensional setting, recall \eqref{eq:moments V1},  
yields that $M$ is a true martingale. Since $M_T=1$, writing $\E[M_T|\mathcal F_t]=M_t$, we obtain 
\bes{
    \label{eq:ito_gamma}
    \Gamma_t = \E \Big[ \exp\Big(\int_t^T \big(2r(s) - \sum_{i=1}^d V^i_s (\theta_i + \rho_i \nu_i \psi^i(T-s))^2\big) ds\Big) \mid \mathcal F_t\Big], \quad t\leq T,
}
which ensures that $0$ $<$ $\Gamma_t$ $\leq$ $e^{2 \int_t^T r(s) ds}$, $\P-a.s.$, since $V\in \mathbb R^d_+$. As for $Z^2$, it is clear that it belongs to $L^2_{\F}([0,T], \R^d)$ since $\Gamma$ and $\psi$ are bounded and $\E \Big[\int_0^T  \sum_{i=1}^d V^i_s ds \Big] <  \infty$ by \eqref{eq:moments V1}.
\end{proof}

The following remark makes precise the existence of a continuous solution to the Riccati-Volterra equation \eqref{eq:Riccatipsi1}-\eqref{eq:Riccatipsi2}.
 
\begin{remark}
Assume that $K$ satisfies the assumptions  of Remark \ref{R:existenceV}.
\begin{itemize}
    \item 
    If $1-2 \rho_i^2 \geq 0$, then \citet[Lemma 6.3]{AJLP17} provides the existence of a unique solution $\psi \in L^2([0,T],\R^d_-)$. Continuity of such solution can then be easily established, since as opposed to  \citet[Lemma 6.3]{AJLP17}, \eqref{eq:Riccatipsi1} starts from $0$.
    \item
    If $d=1$ and $1-2\rho_1^2 < 0$, \citet[Lemma A.4]{han2019mean} establishes the existence of a continuous solution $\psi$.
\end{itemize}
\end{remark}

\vspace{1mm}

Using Theorem~\ref{T:verif}, we can now explicitly solve the Markowitz problem \eqref{optimization_problem} in the multivariate Volterra Heston model \eqref{VolSqrt}-\eqref{eq:correstructureheston}-\eqref{eq:hestonS}. The next theorem extends \cite[Theorem 4.2]{han2019mean} to the multivariate case.  Notice that the martingale {distortion} argument in this cited paper is specific to the dimension $d$ $=$ $1$, and here, instead, we rely on the generic verification result in Theorem \ref{T:verif}. 
 
\begin{theorem} 
Assume that there exists  a solution $\psi \in C([0,T],\R^d)$ to the Riccati-Volterra equation \eqref{eq:Riccatipsi1}-\eqref{eq:Riccatipsi2} such that 
\bes{\label{eq:condtheta}
    \max_{1 \leq i \leq d} \max_{t \in [0,T]} \left( \theta_i^2 + \nu_i^2 \psi^i(t)^2 \right) \leq \frac{a}{a(p)}, \quad \text{ for some } p>2,
}
 where $a(p)$ is given by \eqref{eq:constap} and the constant $a>0$ is such that $\E\left[\exp\big(a\int_0^T \sum_{i=1}^d V^i_s ds\big)\right] < \infty$. Assume that $g^i_0(0)>0$ for some $i\leq d$. Then, the optimal investment strategy for the maximization problem \eqref{optimization_problem} in the   multivariate Volterra Heston model \eqref{VolSqrt}-\eqref{eq:correstructureheston}-\eqref{eq:hestonS} is  given by the admissible control
        \bes{
        \label{eq:optimal_control_Heston}
        \c^{*i}_t = - \left(\theta_i + \rho_i\nu_i \psi^i(T-t) \right) \sqrt{V_t^i} \left(X^{\c^*}_t - \xi^* e^{-\int_t^T r(s) ds}\right), \quad 1 \leq i \leq d,
    }
 where $\xi^*$ is defined as in \eqref{eq:eta_star}, the wealth process $X^*$ $=$ $X^{\alpha^*}$ by \eqref{eq:wealth} with $\lambda=\big(\theta_1 \sqrt{V^1},$ $\ldots, \theta_d \sqrt{V^d} \big)^\top$, 
 and the optimal value is given by \eqref{eq:value_final} with $\Gamma_0$ as in  \eqref{eq:ito_gamma}.     
\end{theorem}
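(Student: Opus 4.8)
The plan is to obtain the statement as a direct consequence of the generic verification Theorem~\ref{T:verif}, fed with the explicit triplet constructed in Lemma~\ref{lemma:existence_riccati_sto}. That lemma already tells us that, under the correlation structure \eqref{eq:correstructureheston}, the matrix $C$ of \eqref{eq:CR} equals $\diag(\rho_1,\ldots,\rho_d)$, and that the triplet of \eqref{eq:GammaHeston}, with $Z^1\equiv 0$, $Z^{2,i}_t=\psi^i(T-t)\nu_i\sqrt{V^i_t}$ and $\Gamma$ admitting the representation \eqref{eq:ito_gamma}, solves the Riccati BSDE \eqref{eq:riccati_sto} and belongs to $\S^{\infty}_{\F}([0,T],\R)\times L^2_{\F}([0,T],\R^d)\times L^2_{\F}([0,T],\R^d)\subseteq \S^{\infty}_{\F}([0,T],\R)\times L^{2,loc}_{\F}([0,T])\times L^{2,loc}_{\F}([0,T])$, i.e.\ exactly the space required by Theorem~\ref{T:verif}. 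Hence the whole proof reduces to verifying hypotheses (H1) and (H2) and then reading off formulas \eqref{eq:optimal_control_final}--\eqref{eq:value_final}.

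For (H1), the representation \eqref{eq:ito_gamma} together with $V\in\R^d_+$ immediately gives $0<\Gamma_t\le e^{2\int_t^T r(s)\,ds}$ for all $t\le T$, which takes care of positivity of $\Gamma$ and of the upper bound on $\Gamma_0$. For the strict inequality I would rewrite \eqref{eq:ito_gamma} at $t=0$ as
\begin{equation}
\Gamma_0 = e^{2\int_0^T r(s)\,ds}\;\E\Big[\exp\Big(-\int_0^T\sum_{i=1}^d\big(\theta_i+\rho_i\nu_i\psi^i(T-s)\big)^2 V^i_s\,ds\Big)\Big],
\end{equation}
and check that the integral in the exponent is not almost surely zero: since $g^i_0(0)>0$ for some $i$, we have $V^i_0=g^i_0(0)>0$, so by continuity of paths $V^i$ stays strictly positive on a (random) right-neighbourhood of $0$, and, as $\psi^i$ is continuous with $\psi^i(0)=0$, the deterministic coefficient $s\mapsto\theta_i+\rho_i\nu_i\psi^i(T-s)$ does not vanish identically on the times at which $V^i$ is known to be positive; consequently the expectation above is strictly less than $1$, i.e.\ $\Gamma_0<e^{2\int_0^T r(s)\,ds}$.

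For (H2), using $Z^1\equiv 0$, $|\lambda_s|^2=\sum_{i=1}^d\theta_i^2 V^i_s$ and $|Z^2_s|^2=\sum_{i=1}^d\nu_i^2\psi^i(T-s)^2 V^i_s$, one gets
\begin{equation}
|\lambda_s|^2+|Z^1_s|^2+|Z^2_s|^2=\sum_{i=1}^d\big(\theta_i^2+\nu_i^2\psi^i(T-s)^2\big)V^i_s\le\frac{a}{a(p)}\sum_{i=1}^d V^i_s,
\end{equation}
where the last inequality is precisely assumption \eqref{eq:condtheta}, $a(p)$ being the constant defined in \eqref{eq:constap} associated with $C=\diag(\rho_i)$. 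Multiplying by $a(p)$ and exponentiating, the left-hand side of \eqref{eq:assumption_novikov} is bounded above by $\E\big[\exp\big(a\int_0^T\sum_{i=1}^d V^i_s\,ds\big)\big]$, which is finite by the choice of $a$ (using also the moment bound \eqref{eq:moments V1}). Thus (H1) and (H2) both hold.

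Theorem~\ref{T:verif} then applies and yields that $\alpha^*$ of \eqref{eq:optimal_control_final} is admissible and optimal, with $\xi^*$ and $V(m)$ as in \eqref{eq:eta_star} and \eqref{eq:value_final}. It only remains to specialize: with $C=\diag(\rho_i)$, $\lambda^i_t=\theta_i\sqrt{V^i_t}$, $Z^1\equiv 0$ and $Z^{2,i}_t=\psi^i(T-t)\nu_i\sqrt{V^i_t}$, the $i$-th component of $\lambda_t+Z^1_t+CZ^2_t$ is exactly $(\theta_i+\rho_i\nu_i\psi^i(T-t))\sqrt{V^i_t}$, so \eqref{eq:optimal_control_final} becomes \eqref{eq:optimal_control_Heston}, and $\Gamma_0$ in \eqref{eq:value_final} is \eqref{eq:ito_gamma} evaluated at $t=0$. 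I expect the one genuinely delicate point to be the strict inequality in (H1): turning $g^i_0(0)>0$ into a non-degeneracy statement requires some care about the zeros of $s\mapsto\theta_i+\rho_i\nu_i\psi^i(T-s)$ near $s=0$ (via $\psi^i(0)=0$ and continuity of $\psi^i$); everything else is routine bookkeeping built on Lemma~\ref{lemma:existence_riccati_sto}, Theorem~\ref{T:verif} and the moment estimates \eqref{eq:moments V1}.
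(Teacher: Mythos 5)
Your proposal is correct and follows essentially the same route as the paper: invoke Lemma~\ref{lemma:existence_riccati_sto} for the explicit triplet, verify (H1) from the representation \eqref{eq:ito_gamma} together with $g^i_0(0)>0$ and continuity of $V^i$, verify (H2) exactly as in \eqref{eq:condtheta}, and then specialize \eqref{eq:optimal_control_final} with $C=\diag(\rho_1,\ldots,\rho_d)$. The only quibble is in your extra justification of the strict inequality in (H1): the observation $\psi^i(0)=0$ concerns the coefficient $\theta_i+\rho_i\nu_i\psi^i(T-s)$ near $s=T$, not on the right-neighbourhood of $0$ where $V^i$ is known to be positive, so it does not by itself rule out the coefficient vanishing there — but the paper asserts this nondegeneracy step with no more detail than you do, so this does not set you apart from the paper's own argument.
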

\begin{proof}
First note that under the specification \eqref{eq:GammaHeston}, the candidate for the optimal feedback control defined in  \eqref{eq:optimal_control_final} takes the form
    \bes{
    \c^*_t &= \;   - \big(\lambda_t + Z^1_t + C Z^2_t \big) \big(X^*_t  - \xi^* e^{-\int_t^T r(s) ds}\big)  \\
    &= \;  \Big(- \big(\theta_i + \rho_i\nu_i \psi^i(T-t) \big) \sqrt{V_t^i} \big(X^*_t - \xi^* e^{-\int_t^T r(s) ds}\big)   \Big)_{1 \leq i \leq d}.
    }
It then  suffices to check that the assumptions of Theorem \ref{T:verif} are verified to ensure that such $\c^*$ is optimal and to get that \eqref{eq:value_final}  is the optimal value. 
    The existence of a solution triplet $ (\Gamma, Z^1, Z^2) \in { \S^{\infty}_{\F}([0,T], \R) \times L^2_{\F}([0,T], \R^d) \times L^2_{\F}([0,T], \R^N)}$ to the stochastic backward Riccati equation \eqref{eq:riccati_sto} is ensured by Lemma \ref{lemma:existence_riccati_sto}. In addition,  \eqref{eq:ito_gamma} implies that $\Gamma_0 < e^{2\int_0^T r(s) ds}$ since $g^i_0(0)>0$ for some $i\leq d$ by assumption and $V^i$ is continuous. Thus  condition (H1)  of Theorem \ref{T:verif} is verified.
    As for condition (H2) of Theorem \ref{T:verif}, note that 
    \bes{
       a(p)\left( |\lambda_s|^2 + \left|Z^1_s\right|^2 + \left|Z^2_s\right|^2 \right) \; = \; a(p) \sum_{i=1}^d V_s^i \left( \theta_i^2 + \nu_i^2 \psi^i(t)^2 \right) \; \leq \;  a \sum_{i=1}^d V_s^i,
    }
which implies that $\E \left[ \exp\left(  a(p) \int_0^T \left( |\lambda_s|^2 + \left|Z^1_s\right|^2 + \left|Z^2_s\right|^2  \right)ds\right) \right]< \infty$ and ends the proof.
\end{proof}
 
\begin{remark} 
Condition \eqref{eq:condtheta} concerns the risk premium constants  $(\theta_1,\ldots, \theta_d)$. For  $a>0$,   a sufficient condition ensuring 
$\E\big[\exp\big(a\int_0^T \sum_{i=1}^d V^i_s ds\big)\big]$ $<$ $\infty$ is the existence of a conti\-nuous solution $\tilde{\psi}$ to  the Riccati--Volterra 
\begin{align} 
\tilde{\psi}^i(t) &= \; \int_0^t K_i(t-s) \Big(a+ \big(D\tilde{\psi}(s)\big)_i + \frac{\nu_i^2}{2}\tilde{\psi}^i(s)  \Big) ds,
\end{align} 
see \citet[Theorem 4.3]{AJLP17}.
In the one dimensional case $d=1$,   such existence  is established in \citet[Lemma A.2]{han2019mean} {for the case where $g_0(t) = V_0 + \kappa \int_0^t K(t-s)\phi ds$, $\phi\geq 0$, $D = - \kappa$ and $a < \frac{\kappa^2}{2 \nu^2} $}. 
\end{remark}

{\begin{remark}
    Note that in the one dimensional case the condition \eqref{eq:condtheta} can be made more explicit by bounding $\psi$ with respect to $\theta$. Indeed since $-\theta^2<0$ we get from \citet[Theorem C.1]{abi2019multifactor} that $\psi$ is non-positive. Furthermore, the fact that $\psi$ is solution to the following linear Volterra equation 
    \bes{
        \chi(t) = \int_0^t K(t-s)  \Big( -\theta^2 + \big((D-2\theta \rho \nu) + \frac{\nu^2}{2}(1-2\rho^2)\psi(s)\big)\chi(s) \Big)ds,
    }
    leads to, see \citet[Corollary C.4]{abi2019multifactor},
    \bes{
        \sup_{t \in [0,T]} |\psi_t| \leq |\theta|^2\int_0^T R_{D}(s)ds,
    }
    where $R_D$ is the resolvent of $K D$. Consequently, a sufficient condition on $\theta$ to ensure \eqref{eq:condtheta} would be 
    \bes{
        \theta^2\left( 1+(\theta\nu)^2\int_0^T R_D(s)ds \right) \leq \frac{a}{a(p)}.
    }
\end{remark}
}

\begin{remark}
In order to numerically implement the optimal strategy \eqref{eq:optimal_control_Heston}, one needs to simulate  the possibly non-Markovian process $V$ and to discretize the Riccati-Volterra equation  for $\psi$.
\citet{abi2019lifting,abi2019multifactor} develop a  taylor-made  approximating procedure for  the stochastic Volterra equation \eqref{VolSqrt} (resp. the Riccati-Volterra equation \eqref{eq:Riccatipsi1}), using  finite-dimensional Markovian semimartingales (resp. finite-dimensional Riccati ODE's). An illustration of such procedure on the mean-variance pro\-blem in the univariate Volterra Heston model for the fractional kernel is given in  \citet[Section 5]{han2019mean}.
\end{remark}

\section{Multivariate quadratic Volterra models}\label{S:quadratic}

Before we introduce the class of multivariate quadratic Volterra models, we need to define and {introduce some notations} on integral operators.

\subsection{Integral operators}

Fix $T>0$.  We denote by $\langle \cdot, \cdot \rangle_{L^2}$ the inner product on  $L^2\left([0,T],\R^N\right)$ that is
\begin{align}
\langle f, g\rangle_{L^2} = \int_0^T f(s)^{\top} g(s) ds, \quad f,g\in L^2\left([0,T],\R^N\right). 
\end{align}

We define $L^2\left([0,T]^2,\R^{N\times N}\right)$ to be the space of  measurable kernels $K:[0,T]^2 \to \R^{N\times N}$ such that 
\begin{align*}
\int_0^T \int_0^T |K(t,s)|^2 dt ds < \infty.
\end{align*}
For any $K,L \in  L^2\left([0,T]^2,\R^{N\times N}\right)$ we define the $\star$-product   by
\begin{align}\label{eq:starproduct}
(K \star L)(s,u) = \int_0^T K(s,z) L(z,u)dz, \quad  (s,u) \in [0,T]^2,
\end{align}
which is well-defined in $L^2\left([0,T]^2,\R^{N\times N}\right)$ due to the Cauchy-Schwarz inequality.  For any  kernel $K \in L^2\left([0,T]^2,\R^{N\times N}\right)$, we denote by {$\boldsymbol K$} the integral operator   induced by the kernel $K$ that is 
\begin{align}
 ({\boldsymbol K} g)(s)=\int_0^T K(s,u) g(u)du,\quad g \in L^2\left([0,T],\R^N\right).
\end{align}
$\boldsymbol K$ is a linear bounded operator from  $L^2\left([0,T],\R^N\right)$ into itself. 
  If $\boldsymbol{K}$ and $\boldsymbol{L}$ are two integral operators induced by the kernels $K$ and $L$  in $L^2\left([0,T]^2,\R^{N\times N}\right)$, then $\boldsymbol{K}\boldsymbol{L}$ is the  integral operator induced by the kernel $K\star L$.

We denote by $K^*$ the adjoint kernel of $K$ for $\langle \cdot, \cdot \rangle_{L^2}$, that is 
\begin{align}
K^*(s,u) &= \; K(u,s)^\top, \quad  (s,u) \in [0,T]^2,
\end{align}
and by $\boldsymbol{K}^*$ the corresponding adjoint integral operator. 

\vspace{1mm}

\begin{definition}\label{D:nonnegative}
A kernel $K \in L^2\left([0,T]^2,\R^{N\times N}\right)$ is symmetric nonnegative if $K=K^*$ and  
$$   \int_0^T \int_0^T f(s)^\top K(s,u) f(u) du ds \geq 0, \quad  \forall f  \in L^2\left([0,T],\R^N\right).  $$
In this case, the integral operator $\boldsymbol{K}$ is said to be symmetric nonnegative and $\boldsymbol{K}=\boldsymbol{K}^*$ and $\langle f,\boldsymbol{K}f\rangle_{L^2}\geq 0$.  $\boldsymbol{K}$ is said to be symmetric nonpositive, if $(-\boldsymbol{K})$ is symmetric nonnegative. 
\end{definition}

\vspace{1mm}

We recall the definition of    Volterra kernels of continuous and  bounded type  in the terminology of \citet[Definitions 9.2.1, 9.5.1 and 9.5.2]{GLS:90}.  

\begin{definition}\label{D:kernelvolterra} A kernel $K:\R_+^2 \to \R^{N\times N}$  is a Volterra kernel of continuous and  bounded type  in $L^2$ if   $K(t,s)=0$ whenever $s>t$ and
\begin{align}
    \label{assumption:K_stein}
    \sup_{t\in [0,T]}\int_0^T |K(t,s)|^2 ds <  \infty, \; \mbox{ and }  \lim_{h \to 0} \int_0^T |K(u+h,s)-K(u,s)|^2 ds=0, \; u \leq T.
\end{align}
  \end{definition}
  
\vspace{1mm}

Any convolution kernel of the form $K(t,s)=k(t-s)\bold 1_{s\leq t}$	with $k\in L^2\left([0,T],\R^{N\times N}\right)$ satisfies \eqref{assumption:K_stein}, we refer to \citet[Example 3.1]{jaber2019laplace} for additional examples. Note that $(s,t)\mapsto K(s,t)$ is not necessarily continuous nor bounded.

  {For completeness, we collect in Appendix~\ref{A:resolvents} below standard results for integral operators and their resolvents.}

 \subsection{The model}
 
 In this section, we assume that the components of the stochastic volatility matrix $\sigma$ in \eqref{eq:stocks} are given by  $\sigma^{ij}= \gamma_{ij}^\top Y $, where  $\gamma_{ij}\in \R^N$ and  $Y=(Y^1,\ldots,Y^N)^\top$ is the following $N$-dimensional Volterra Ornstein--Uhlenbeck process
\begin{align}
    \label{eq:SteinY}
    Y_t = g_0(t) + {\int_0^t K(t,s)DY_sds+} \int_0^t K(t,s) {\eta} dW_s, 
\end{align} 
where  $D,\eta \in \R^{N\times N}$,  $g_0:\R_+ \to \R^{N}$ is locally bounded,
$W$ is a {$N$-dimensional} process as in \eqref{eq:correlstructuregen}, i.e.,  
\begin{align}\label{eq:Steincorel}
W^k_t =  C_k^\T B_t + \sqrt{1-C_k^\T C_k }B^{\perp,k}_t, \quad 
\end{align}
where $C_k \in {\R^d}$, such that $C_k^\top C_k\leq 1$, $k=1,\ldots,N$,
and
$K:[0,T]^2 \to \R^{N\times N}$ is a Volterra kernel of continuous and bounded type   in $L^2$ as in Definition~\ref{D:kernelvolterra}. We stress that the process $W$ is not necessarily a $N$-dimensional Brownian motion due to the possible correlations.

Furthermore,  the risk premium is assumed to be in the form
\begin{align}
\lambda_t =\Theta Y_t, \quad t \leq T,
\end{align}
for some $\Theta \in \R^{d \times N}$,
 so that the dynamics  for the stock prices \eqref{eq:stocks} reads as 
\begin{align}\label{eq:Steinprice}
dS^i_t &= \;  S^i_t \Big( r(t) + \sum_{k,\ell=1}^N \sum_{j=1}^d \gamma_{ij}^\ell  \Theta^{jk}  Y^\ell_t Y^k_t    \Big) dt 
+  S^i_t \sum_{j=1}^d  \gamma_{ij}^\top Y_t dB^j_t, \quad i=1,\ldots, d.
\end{align}

The appellation quadratic reflects the quadratic dependence of the drift and the covariance matrix of $\log S$ in $Y$.  Such models nest as special cases the Volterra extensions of the celebrated  \citet{stein1991stock} or \citet{schobel1999stochastic} model  and certain Wishart models of \citet{bru1991wishart} as shown in the following example. 

\vspace{1mm}

\begin{example}\label{E:quadratic}
(i) \textit{The multivariate Volterra Stein-Stein model:}

\noindent For $N=d$,  $K=\diag(K^1,\ldots,K^d)$ and $\gamma_{ij}=\beta_{ij} e_i$ with $\beta_{ij} \in \R$  such that $\sum_{j=1}^d \beta_{ij}^2=1$  and $(e_1,\ldots,e_d)$ the canonical basis of $\R^d$, we recover the multivariate Volterra Stein-Stein model defined by 
$$
\begin{cases}
dS^i_t &= \; S^i_t \Big( r(t) + \sum_{j,k=1}^d \beta_{ij} \Theta^{jk} Y^i_t Y^k_t    \Big) dt + S^i_t Y^i_t \sum_{j=1}^d \beta_{ij} dB^j_t, \\
\; Y^i_t &=  \;  g_0^i(t) + \int_0^t K^i(t,s) \sum_{j=1}^d D^{ij}Y^j_s ds + \int_0^t K^i(t,s) \eta^i dW_s^i,
\quad i=1,\ldots, d,
\end{cases}
$$
 and $C_{i}=\rho_i(\beta_{i1},\ldots,\beta_{id})^\top$ to take into account the leverage effect.  Recall that $W$ is possibly correlated and is not necessarily a Brownian motion.
\vspace{1mm}

\noindent (ii) \textit{The Volterra Wishart covariance model:}  

\noindent Using the vectorization operator, which stacks the columns of a matrix one underneath another in a vector, see \citet[Section 3.1]{jaber2019laplace}, one can recover the Volterra Wishart covariance model for $N=d^2$:
$$
\begin{cases}
dS_t &= \;  \diag(S_t) \big[ r(t) {\bold{1}_d}  dt +   \tilde Y_t dB_t\big],  \quad S_0 \in \R^d_+, \\
\; \Tilde Y_t &= \;  \tilde g_0(t) + \int_0^t  \tilde K(t,s) D Y_s ds + \int_0^t \tilde K(t,s)\eta dW_s,
\end{cases}
$$
with $\tilde g_0:[0,T]\to \R^{d \times d}$, a suitable measurable kernel  $\tilde K: [0,T]^2 \to \R^{d \times d}$,  
a $d\times d$ Brownian motion $W$ and 
\begin{align}\label{eq:correlatedBM}
W^{ij} = \rho_{ij}^\top B + \sqrt{1- \rho_{ij}^\top \rho_{ij}}\, B^{\perp,ij},\quad i,j=1,\ldots,d,
\end{align}
for some  $ \rho_{ij}\in \R^{d \times d}$ such that $ \rho_{ij}^\top \rho_{ij}\leq 1$, for $i,j=1,\ldots,d$,
where  $B^\perp$ is a  $d\times d$--dimensional Brownian motion  independent of $B$. Here the process $\tilde Y$ is $d\times d$-matrix valued.
\end{example}

\begin{remark}\label{R:correlStein}
Note  that with \eqref{eq:Steincorel},  there are  no restrictions on the correlations between $Y^i$ and the stocks $S^i$ in \eqref{eq:SteinY} and \eqref{eq:Steinprice}, 
in contrast with the correlation structure \eqref{VolSqrt} of the multivariate Volterra Heston model.  Moreover,  the models in Example \ref{E:quadratic} allow us to deal with  correlated stocks 
in contrast with the multivariate Heston model in \eqref{eq:hestonS} where no correlation between the driving Brownian motion of the assets $S^i$ and $S^j$ is allowed  in order to keep the affine structure. 
\end{remark}

\vspace{1mm}

{Since $K$ is a Volterra kernel of continuous and bounded type  in $L^2$, there exists a progressively measurable  $\R^{N}\times \R_+^d$-valued strong solution $(Y,S)$ to \eqref{eq:SteinY} and \eqref{eq:Steinprice}  such that 
\begin{align}\label{eq:moments V}
    \sup_{t\leq T} \E\left[ |Y_t|^p \right] < \infty, \quad p \geq 1.
\end{align}
Indeed, the solution for \eqref{eq:SteinY} is given in the following closed form
\begin{align}\label{eq:Yclosed}
    Y_t =g_0(t)+ \int_0^t R_D(t,s)g_0(s)ds +\int_0^t (K(t,s)+R_D(t,s))\eta dW_s,
\end{align}
where $R_D$ is the resolvent of  $KD$, whose existence is ensured by Lemma \ref{L:resolventvolterra}-\ref{L:resolventvolterrai} below, {we refer to Appendix~\ref{A:resolvents} for more details on the resolvents}. The existence of $S$ readily follows from that of $Y$ and is given as a stochastic exponential.} In the sequel, we will assume  that the solution $Y$ is continuous. Additional conditions  on $K$, in the spirit of \eqref{eq:kernelhcond}, 
 are needed to ensure the existence of continuous modification, by an application of the Kolmogorov-Chentsov continuity criterion,  for instance, as shown in the following remark.

\begin{remark}
For $s\leq t$ and $p\geq 2$,
an application of Jensen and Burkholder-Davis-Gundy's inequalities yield
\begin{align}
    \E\left[ |(Y_t -g_0(t))- (Y_s-g_0(s))|^p \right] \leq  c&\Big( 1 + \sup_{r\leq T}\E\big[|Y_s|^p\big]\Big)\\
    & \times \left( \int_s^t |K(t,r)|^2 dr + \int_0^T |K(t,r)-K(s,r)|^2 dr \right)^{p/2}.
\end{align}
 This shows that $(Y-g_0)$ admits a continuous modification, by the Kolmogorov-Chentsov continuity criterion,  provided that
\begin{align}
     \int_s^t |K(t,r)|^2 dr + \int_0^T |K(t,r)-K(s,r)|^2dr \leq c |t-s|^{\gamma}, 
\end{align}
for some $\gamma>0$.
\end{remark}


\subsection{The explicit solution}

In this section, we provide an explicit solution for the Markowitz problem for quadratic Volterra models, and  our main result is stated in Theorem~\ref{T:quadracticsolution} below. 

Exploiting the quadratic structure of \eqref{eq:SteinY}-\eqref{eq:Steinprice}, see \citet{jaber2019laplace}, we provide an explicit solution to the Riccati BSDE in Lemma \ref{lemma:existence_riccati_sto_quadratic} below, in terms of the following family of linear operators  $( \boldsymbol{\Psi}_t)_{0\leq t\leq T}$ acting on $L^2\left([0,T],\R^N\right)$:
\bes{
    \label{def:riccati_operator}
     \boldsymbol{\Psi}_t = - \Big(\id - \boldsymbol{\hat{K}}\Big)^{-*} {\Theta^\top \Big(\id + 2\Theta \tilde{\boldsymbol{\Sigma}}_t \Theta^\top\Big)^{-1} 
     \Theta}\Big(\id - \boldsymbol{\hat{K}}\Big)^{-1}, \quad 0\leq t\leq T,
}
where $\boldsymbol F^{-*}:=(\boldsymbol{F}^{-1})^{*}$, and $\boldsymbol{\hat{K}}$  is the integral operator induced by the kernel $\hat{K} = K(D-2\eta C^\T \Theta)$  and $\tilde{\boldsymbol{\Sigma}}_t$ the integral operator defined by
\bes{
    \label{def:C_tilde}
\boldsymbol{\tilde{\Sigma}}_t = (\id-\boldsymbol{\hat{K}})^{-1} \boldsymbol{\Sigma}_t (\id-\boldsymbol{\hat{K}})^{-*}, \qquad t \in [0,T],
}
with $\boldsymbol{\Sigma}_t$ defined as the integral operator associated to the kernel
\bes{\label{eq:sigmakernel}
{\Sigma}_t(s,u) = \int_t^{s \wedge u } K(s,z)\eta \big(U -  2 C^\T C \big) \eta^\T K(u,z)^\T  dz, \qquad t \in [0,T],
}
where $U = \frac{d \langle W\rangle_t}{dt} = \left(1_{i=j} + 1_{i\neq j} (C_i)^\T C_j  \right)_{ 1 \leq i,j\leq N}$.

We start by deriving some first properties of  $t\mapsto\boldsymbol{\Psi}_t$, namely that it is well-defined,  strongly differentiable and satisfies an operator Riccati equation under  the following additional assumption on the kernel:
\begin{align}\label{eq:assumptionkerneldiff1}
    \sup_{t\leq T} \int_0^T |K(s,t)|^2 ds < \infty.
\end{align}
We recall that $t\mapsto \boldsymbol{\Psi}_t$ is said to be strongly differentiable at time $t\geq  0$,  if there exists a bounded linear operator $\dot{\boldsymbol{\Psi}}_t$  from $ L^2\left([0,T],\R^N\right)$  into  itself  such that
\begin{align}
    \lim_{h\to 0} \frac{1}{h} \| \boldsymbol{\Psi}_{t+h}-\boldsymbol{\Psi}_{t} -h \dot{\boldsymbol{\Psi}}_{t} \|_{\rm{op}}=0, \quad \text{where }  \|\boldsymbol{G}\|_{\rm {op}}= \sup_{f \in L^2([0,T],\R^N)} \frac{\|\bold G f\|_{L^2}}{\|f\|_{L^2}}.
\end{align}

\begin{lemma}\label{L:Psi}
Fix a kernel $K$ as in Definition~\ref{D:kernelvolterra} satisfying \eqref{eq:assumptionkerneldiff1}. Assume  that $(U-2C^\top C) \in \S^N_+$. Then, for each $t\leq T$, $\boldsymbol{\Psi}_t$ given by \eqref{def:riccati_operator} is well-defined and is a bounded  linear operator from $L^2\left([0,T],\R^N\right)$ into itself. Furthermore, 
\begin{enumerate}
\item \label{L:Psi1}
$(\Theta^\top \Theta \id + \boldsymbol{\Psi}_t)$ is an integral  operator induced by a kernel  $\psi_t(s,u)$ such that 
\bes{
    \label{eq:bound_psi_leb}
    \sup_{t\leq T} \int_{[0,T]^2}|\psi_t(s,u)|^2 ds du<\infty.
}
\item \label{L:Psi2}
For any $f \in L^2\left([0,T],\R^N\right)$,  
\bes{
    \label{eq:Psi_on_boudary}
    (\boldsymbol{\Psi}_t f 1_t)(t) =& (-\Theta^\top \Theta \id + \boldsymbol{\hat{K}}^*\boldsymbol{\Psi}_t )(f 1_t)(t),
}
where $1_t:s\mapsto \bold 1_{t\leq s}$.
    \item \label{L:Psi3}
  $t\mapsto \boldsymbol{\Psi}_t$ is strongly differentiable and  satisfies the operator Riccati equation
 \bes{
    \label{eq:riccati_psiBold}
    \dot{\boldsymbol{\Psi}}_t &= 2\boldsymbol{\Psi}_t \dot{\boldsymbol{{\Sigma}}}_t  \boldsymbol{\Psi}_t, \qquad t \in [0,T] \\
    {\boldsymbol{\Psi}_T}&=- \left(\id - \boldsymbol{\hat{K}}\right)^{-*} { \Theta^\top \Theta}\left(\id - \boldsymbol{\hat{K}}\right)^{-1}\\
}   
where $\dot{\boldsymbol{{\Sigma}}}_t$ is the strong derivative of $t\mapsto\bold{\Sigma}_t$ induced by the kernel
\begin{align}\label{eq:diffkernelsigma}
 \dot{\Sigma}_t(s,u)=-K(s,t)\eta \big( U-2C^\top C\big)\eta^\top K(u,t)^\top, \quad a.e.
\end{align}
\end{enumerate}
\end{lemma}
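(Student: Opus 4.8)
\textbf{Proof proposal for Lemma~\ref{L:Psi}.}

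The plan is to establish the four assertions in order, building on the structure of the $\star$-product calculus and the resolvent results collected in Appendix~\ref{A:resolvents}. First I would check that $\boldsymbol{\Psi}_t$ is well-defined: the operator $\boldsymbol{\hat K}$ is induced by the kernel $\hat K = K(D - 2\eta C^\T\Theta) \in L^2([0,T]^2,\R^{N\times N})$ because $K$ is of continuous and bounded type in $L^2$, so by Lemma~\ref{L:resolventvolterra} the resolvent of $\hat K$ exists and $(\id - \boldsymbol{\hat K})^{-1}$ is a bounded operator of the form $\id + \boldsymbol{R}$ with $R\in L^2$; the same holds for the adjoint. The key positivity input is that $(U - 2C^\T C)\in\S^N_+$, which makes $\Sigma_t$ a symmetric nonnegative kernel in the sense of Definition~\ref{D:nonnegative} (it is a Gram-type kernel $\int_t^{s\wedge u} K(s,z)\eta(U-2C^\T C)\eta^\T K(u,z)^\T dz$), hence $\tilde{\boldsymbol\Sigma}_t$ is symmetric nonnegative, and therefore $\id + 2\Theta\tilde{\boldsymbol\Sigma}_t\Theta^\T$ is a symmetric operator bounded below by $\id$, so its inverse exists, is bounded, and has operator norm $\leq 1$. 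Composing bounded operators gives that $\boldsymbol{\Psi}_t$ is a well-defined bounded linear operator, and it is manifestly symmetric nonpositive.

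For assertion~\ref{L:Psi1}, I would write $\Theta^\T\Theta\,\id + \boldsymbol{\Psi}_t = \Theta^\T\Theta\,\id - (\id-\boldsymbol{\hat K})^{-*}\Theta^\T(\id + 2\Theta\tilde{\boldsymbol\Sigma}_t\Theta^\T)^{-1}\Theta(\id-\boldsymbol{\hat K})^{-1}$ and manipulate it algebraically. Using $(\id - \boldsymbol{\hat K})^{-1} = \id + \boldsymbol R$ and the identity $(\id + A)^{-1} = \id - (\id+A)^{-1}A$ applied to $A = 2\Theta\tilde{\boldsymbol\Sigma}_t\Theta^\T$, the $\Theta^\T\Theta\,\id$ term cancels against the ``leading'' part, leaving an expression in which every summand contains at least one genuine $L^2$ integral kernel factor (coming from $\boldsymbol R$, $\boldsymbol R^*$, or $\tilde{\boldsymbol\Sigma}_t$); hence the difference is an integral operator with kernel $\psi_t$, and the uniform $L^2$ bound \eqref{eq:bound_psi_leb} follows from the uniform (in $t$) bounds on $\|\boldsymbol R\|$, $\|(\id+2\Theta\tilde{\boldsymbol\Sigma}_t\Theta^\T)^{-1}\|_{\rm op}\leq 1$, and $\sup_{t}\|\Sigma_t\|$, the last being finite since $\int_{[0,T]^2}|\Sigma_t(s,u)|^2\,ds\,du \leq \int_{[0,T]^2}\big(\int_0^T|K(s,z)|\,|\eta(U-2C^\T C)\eta^\T|\,|K(u,z)|\,dz\big)^2 ds\,du$ which is controlled by $\sup_{s\leq T}\int_0^T|K(s,z)|^2 dz$ — here is where the extra assumption \eqref{eq:assumptionkerneldiff1} together with Definition~\ref{D:kernelvolterra} is used.

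For assertion~\ref{L:Psi2}, the boundary identity \eqref{eq:Psi_on_boudary} should come from the fact that $\Sigma_t(s,u) = 0$ unless both $s,u > t$ (the integral $\int_t^{s\wedge u}$ is empty otherwise), so evaluating $\tilde{\boldsymbol\Sigma}_t (f 1_t)$ and then the whole composition at the point $t$ kills the middle factor: concretely, $(\Theta(\id - \boldsymbol{\hat K})^{-1}(f1_t))$ paired through $\tilde{\boldsymbol\Sigma}_t$ contributes nothing ``at time $t$'', and what remains is precisely $(-\Theta^\T\Theta\,\id + \boldsymbol{\hat K}^*\boldsymbol\Psi_t)(f1_t)(t)$ after using $(\id - \boldsymbol{\hat K})^{-*} = \id + \boldsymbol{\hat K}^*(\id-\boldsymbol{\hat K})^{-*}$ and the Volterra property that $\hat K(t,z)=0$ for $z>t$. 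I would need to be careful that all these kernels have well-defined traces along the diagonal, which is where \eqref{eq:assumptionkerneldiff1} and the continuous-and-bounded-type assumption re-enter. Finally, for assertion~\ref{L:Psi3}, I would first establish that $t\mapsto\boldsymbol\Sigma_t$ is strongly differentiable with derivative the rank-structured kernel \eqref{eq:diffkernelsigma} — this is a direct computation differentiating $\int_t^{s\wedge u}\!\cdots dz$ in the lower limit, with the operator-norm estimate on the remainder following from \eqref{eq:assumptionkerneldiff1} — then transfer this to $\tilde{\boldsymbol\Sigma}_t$ (the conjugating operators $(\id-\boldsymbol{\hat K})^{\pm 1}$ are $t$-independent, so $\dot{\tilde{\boldsymbol\Sigma}}_t = (\id-\boldsymbol{\hat K})^{-1}\dot{\boldsymbol\Sigma}_t(\id-\boldsymbol{\hat K})^{-*}$), and finally differentiate the formula \eqref{def:riccati_operator}: writing $\boldsymbol M_t = (\id + 2\Theta\tilde{\boldsymbol\Sigma}_t\Theta^\T)^{-1}$ and using $\dot{\boldsymbol M}_t = -\boldsymbol M_t(2\Theta\dot{\tilde{\boldsymbol\Sigma}}_t\Theta^\T)\boldsymbol M_t$, the chain rule gives $\dot{\boldsymbol\Psi}_t = (\id-\boldsymbol{\hat K})^{-*}\Theta^\T\boldsymbol M_t(2\Theta\dot{\tilde{\boldsymbol\Sigma}}_t\Theta^\T)\boldsymbol M_t\Theta(\id-\boldsymbol{\hat K})^{-1}$, and recognizing $\dot{\tilde{\boldsymbol\Sigma}}_t$ in terms of $\dot{\boldsymbol\Sigma}_t$ and re-collecting the outer factors yields exactly $2\boldsymbol\Psi_t\dot{\boldsymbol\Sigma}_t\boldsymbol\Psi_t$; the terminal condition is \eqref{def:riccati_operator} evaluated at $t=T$, where $\Sigma_T\equiv 0$ so $\tilde{\boldsymbol\Sigma}_T = 0$ and the middle factor collapses to $\id$. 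The main obstacle I anticipate is the bookkeeping in assertion~\ref{L:Psi3}: one must justify that strong differentiability passes through the operator inversions and compositions (product rule for strongly differentiable operator-valued maps, uniform boundedness of all factors near each $t$), and that the identity $\dot{\tilde{\boldsymbol\Sigma}}_t$ plugged into the differentiated Riccati formula matches $2\boldsymbol\Psi_t\dot{\boldsymbol\Sigma}_t\boldsymbol\Psi_t$ rather than $2\boldsymbol\Psi_t\dot{\tilde{\boldsymbol\Sigma}}_t\boldsymbol\Psi_t$ — so the cancellation of the conjugating resolvents $(\id-\boldsymbol{\hat K})^{\pm 1}$ between the definition of $\tilde{\boldsymbol\Sigma}$ and the definition of $\boldsymbol\Psi$ must be tracked precisely.
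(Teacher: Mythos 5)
Your proposal is correct and follows the same overall strategy as the paper's Appendix~\ref{AppendixL:Psi} proof: resolvent of $\hat K$ via Lemma~\ref{L:resolventvolterra}-\ref{L:resolventvolterrai}, nonnegativity of $\boldsymbol{\Sigma}_t$ (hence $\tilde{\boldsymbol{\Sigma}}_t$) from $(U-2C^\top C)\in\S^N_+$, the Volterra/vanishing structure ($\tilde\Sigma_t(s,\cdot)=0$ for $s\leq t$, $\hat K(t,z)=0$ for $z>t$) together with $(\id-\boldsymbol{\hat K})^{-*}=\id+\boldsymbol{\hat K}^*(\id-\boldsymbol{\hat K})^{-*}$ for assertion \ref{L:Psi2}, and the chain rule on \eqref{def:riccati_operator} with the cancellation of the conjugating factors $(\id-\boldsymbol{\hat K})^{\pm1}$ giving exactly $2\boldsymbol{\Psi}_t\dot{\boldsymbol{\Sigma}}_t\boldsymbol{\Psi}_t$ for assertion \ref{L:Psi3}. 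The one place you genuinely diverge is the treatment of the middle inverse: the paper builds the resolvent kernel $R^\Theta_t$ of $-2\Theta\tilde{\boldsymbol{\Sigma}}_t\Theta^\top$ via Mercer's theorem (Lemma~\ref{L:resolventvolterra}-\ref{L:resolventvolterraii}, after checking continuity of $\tilde\Sigma_t$ with Lemma~\ref{L:staroperation}), writes the fully explicit kernel expansion \eqref{eq:boundary}, and imports a uniform $L^2$ bound on $R^\Theta_t$ from the cited literature; you instead invert $\id+2\Theta\tilde{\boldsymbol{\Sigma}}_t\Theta^\top$ by abstract positivity with $\|(\id+A)^{-1}\|_{\rm op}\leq1$ and use the Hilbert--Schmidt ideal property to identify the kernel of $\Theta^\top\Theta\,\id+\boldsymbol{\Psi}_t$ and its uniform bound. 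Your route is self-contained for assertion \ref{L:Psi1} (no external resolvent bound needed), but it leaves the pointwise evaluation at the time $t$ in assertion \ref{L:Psi2} slightly more delicate, since $(\id+A)^{-1}$ is then only an abstract bounded operator: you must either put it in the form $\id-A(\id+A)^{-1}$ so the leftmost factor $A$ (whose kernel sections vanish for first argument $\leq t$) kills the evaluation, or revert to an explicit kernel expansion as in \eqref{eq:boundary} — you flag this diagonal-trace issue but do not fully discharge it, whereas the paper gets it for free from the explicit kernels $\hat R$ and the continuous $R^\Theta_t$. Two further minor points: the uniform bound $\sup_{t\leq T}\|\Sigma_t\|_{L^2([0,T]^2)}<\infty$ only needs the bound in Definition~\ref{D:kernelvolterra}, while \eqref{eq:assumptionkerneldiff1} is what makes $\dot\Sigma_t$ in \eqref{eq:diffkernelsigma} an $L^2$ kernel and hence is the assumption driving assertion \ref{L:Psi3}; and your differentiation bookkeeping (including the sign and the collapse $\tilde{\boldsymbol{\Sigma}}_T=\boldsymbol{0}$ for the terminal condition) does close correctly, exactly as in the paper.
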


\begin{proof}
{The proof is given in Appendix~\ref{AppendixL:Psi}.}
\end{proof}



\vspace{3mm}

We are now ready to provide a solution for the Riccati-BSDE \eqref{eq:riccati_sto}. For this, denote by $g$ the process
\begin{align}
    \label{eq:processg_quadratic}
     g_t(s)= \mathbf{1}_{t \leq s} \Big(g_0(s) + \int_0^t K(s,u) D Y_u du + \int_0^t K(s,u)\eta dW_u \Big). 
\end{align}
One notes that for each, $s\leq T$, $(g_t(s))_{t\leq s}$ is the adjusted forward process 
\begin{align}
g_t(s) &= \; \mathbb E\Big[  Y_s - \int_t^s K(s,u)DY_udu \mid \Fc_t\Big], \quad s\geq t.
\end{align}
We also denote the trace of an integral operator $\boldsymbol F$ by  $\mbox{Tr}(\boldsymbol{F})=\int_0^T \tr(F(s,s))ds$, where $\tr$ is the usual trace of a matrix, 
and we define the function $\phi$ by
\begin{equation}    \label{eq:ode_phi_quadratic}
\begin{cases}
     \dot{\phi}_{t} &={\mbox{Tr}\big( \boldsymbol{\Psi}_t \dot{\boldsymbol{{\Lambda}}}_t \big)} - 2r(t) \\
     &= \int_{(t,T]} \tr\left( \Theta^\top \Theta K(s,t)\eta  U\eta^\T K(s,t)^\T \right)ds\\  &\quad -   \int_{(t,T]^2} \tr\left( \psi_{t}(s,u) K({u},t)\eta  U\eta^\T K({s},t)^\T \right)ds du - 2r(t), \\
    \phi_{T} &= 0, 
\end{cases}
\end{equation}
where   $\dot{\boldsymbol{{\Lambda}}}_t$ is the integral operator induced by the kernel given by 
\begin{align}\label{eq:dotlambda}
    \dot{\Lambda}_t(s,u)= -K(s,t){\eta U \eta^\top }K(u,t)^\top, \quad u,s\leq T.
\end{align}

\vspace{1mm}

\begin{lemma} \label{lemma:existence_riccati_sto_quadratic} Fix a kernel $K$ as in Definition~\ref{D:kernelvolterra} satisfying \eqref{eq:assumptionkerneldiff1}. Assume  that $(U-2C^\top C) \in \S^N_+$.
     Let $\boldsymbol{\Psi}$ be the operator defined in \eqref{def:riccati_operator}. Then, the process $\left(\Gamma, Z^1, Z^2\right)$  defined by
    \begin{equation}\label{eq:Gamma_quadratic}
    \begin{cases}
        \Gamma_t &= \;  {\exp\left( \phi_{t}  + \langle g_t, \boldsymbol{\Psi}_{t} g_t \rangle_{L^2} \right)  } , \\
        Z^1_t &= \; 0, \\
      Z^{2}_t &= \; 2 \big( ( \boldsymbol{\Psi}_t \boldsymbol{K} \eta)^* g_t \big) (t),
     \end{cases} 
    \end{equation}
 where $g$ and $\phi$ are respectively given by \eqref{eq:processg_quadratic} and \eqref{eq:ode_phi_quadratic}, is a  $\S^{\infty}_{\F}([0,T], \R) \times L^2_{\F}([0,T], \R^d) \times L^2_{\F}([0,T], {\R^N})$-valued solution to the Riccati-BSDE \eqref{eq:riccati_sto}.
\end{lemma}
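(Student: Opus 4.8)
The plan is to verify directly that the triple $(\Gamma, Z^1, Z^2)$ given by \eqref{eq:Gamma_quadratic} solves the BSDE \eqref{eq:riccati_sto} by computing the It\^o differential of $\Gamma_t = \exp(\phi_t + \langle g_t, \boldsymbol{\Psi}_t g_t\rangle_{L^2})$, then identifying the drift and the martingale parts with those prescribed by \eqref{eq:riccati_sto}, and finally establishing the claimed integrability: $\Gamma \in \S^\infty_\F$, $Z^2 \in L^2_\F$.

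\textbf{Step 1: Dynamics of the inner product.} Write $G_t := \langle g_t, \boldsymbol{\Psi}_t g_t\rangle_{L^2} = \int_{[0,T]^2} g_t(s)^\top \psi_t(s,u) g_t(u)\, ds\, du$, where I use Lemma~\ref{L:Psi}\ref{L:Psi1} to know $\Theta^\top\Theta\id + \boldsymbol{\Psi}_t$ is a genuine kernel operator (the $\Theta^\top\Theta\id$ part contributes $\Theta^\top\Theta\int_0^T|g_t(s)|^2ds$, whose differential is handled the same way). I would differentiate $G_t$ using three ingredients: (a) the dynamics of $t\mapsto g_t(s)$, which from \eqref{eq:processg_quadratic} satisfies $dg_t(s) = K(s,t)DY_t\, dt + K(s,t)\eta\, dW_t$ for $t < s$ together with the boundary behaviour $g_t(t) = Y_t + (\text{correction})$; (b) the strong differentiability of $t\mapsto\boldsymbol{\Psi}_t$ with $\dot{\boldsymbol{\Psi}}_t = 2\boldsymbol{\Psi}_t\dot{\boldsymbol{\Sigma}}_t\boldsymbol{\Psi}_t$ from Lemma~\ref{L:Psi}\ref{L:Psi3}; (c) the quadratic covariation term coming from the $dW$ part of $g$. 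A stochastic-Fubini argument (as in the proof of Lemma~\ref{lemma:existence_riccati_sto}, via \citet[Theorem 2.2]{V:12}) lets me interchange the $ds\,du$ integration with the stochastic integration. The martingale part of $dG_t$ will be $2\langle \boldsymbol{\Psi}_t g_t, \boldsymbol{1}_t\mapsto K(\cdot,t)\eta\rangle$-type terms, which should collapse precisely to $(Z^2_t)^\top dW_t$ with $Z^2_t = 2((\boldsymbol{\Psi}_t\boldsymbol{K}\eta)^* g_t)(t)$ after using the boundary identity \eqref{eq:Psi_on_boudary} to handle the $g_t(t)$ evaluations; the Riccati relation for $\dot{\boldsymbol{\Psi}}$ together with the covariation of the $dW$-integrals is exactly what cancels the extra quadratic-in-$g$ drift terms, leaving only the contribution through $\dot{\boldsymbol{\Lambda}}$ and $\dot{\boldsymbol{\Sigma}}$.

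\textbf{Step 2: Dynamics of $\Gamma$ and matching.} With $\Gamma_t = \exp(\phi_t + G_t)$, It\^o gives $d\Gamma_t = \Gamma_t(\dot\phi_t\, dt + dG_t + \frac12 d\langle G\rangle_t)$. The choice of $\phi$ in \eqref{eq:ode_phi_quadratic} is engineered so that $\dot\phi_t = \mathrm{Tr}(\boldsymbol{\Psi}_t\dot{\boldsymbol{\Lambda}}_t) - 2r(t)$ absorbs the deterministic ``trace'' pieces of $\frac12 d\langle G\rangle$ and reproduces the $-2r(t)$ term; what remains in the drift must equal $\Gamma_t|\lambda_t + Z^1_t + CZ^2_t|^2$. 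Here I would compute $\lambda_t + CZ^2_t = \Theta Y_t + 2C^\top((\boldsymbol{\Psi}_t\boldsymbol{K}\eta)^*g_t)(t)$ and show, using the boundary relation \eqref{eq:Psi_on_boudary} and the definitions of $\hat K = K(D - 2\eta C^\top\Theta)$ and $\tilde{\boldsymbol{\Sigma}}_t$, that $|\lambda_t + CZ^2_t|^2$ matches the residual quadratic form in $g_t(t)$ produced by Step 1. Checking $\Gamma_T = 1$ is immediate from $\phi_T = 0$ and $g_T \equiv 0$ off the diagonal (more precisely $\langle g_T,\boldsymbol{\Psi}_T g_T\rangle_{L^2}=0$ since $g_T(s)=0$ for $s<T$ a.e.).

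\textbf{Step 3: Integrability.} For $\Gamma\in\S^\infty_\F$: since $(U - 2C^\top C)\in\S^N_+$, one checks $\boldsymbol{\Sigma}_t$ is symmetric nonnegative, hence $\tilde{\boldsymbol{\Sigma}}_t$ too, hence $(\id + 2\Theta\tilde{\boldsymbol{\Sigma}}_t\Theta^\top)^{-1}$ is a contraction and $\boldsymbol{\Psi}_t$ is symmetric nonpositive; thus $G_t = \langle g_t,\boldsymbol{\Psi}_t g_t\rangle_{L^2} + \Theta^\top\Theta\|g_t\|^2 \le \Theta^\top\Theta\|g_t\|^2$ — wait, more carefully, $\langle g_t,\boldsymbol{\Psi}_t g_t\rangle_{L^2}\le 0$ so $G_t \le 0$, and combined with $\dot\phi_t \le -2r(t) + (\text{bounded})$ one gets a deterministic upper bound $\Gamma_t \le \exp(C_T)$; the lower bound $\Gamma_t > 0$ is automatic from the exponential form. (I should double-check whether the martingale/Laplace-transform representation, analogous to \eqref{eq:ito_gamma}, is the cleaner route to $\Gamma\in\S^\infty$, via $\Gamma_t = \E[\exp(\int_t^T(2r - |\lambda + CZ^2|^2)ds)\mid\Fc_t]$.) For $Z^2\in L^2_\F$: by \eqref{eq:bound_psi_leb} the kernels $\psi_t$ are uniformly $L^2$-bounded, and $\boldsymbol{K}\eta$ is bounded thanks to \eqref{assumption:K_stein}/\eqref{eq:assumptionkerneldiff1}, so $|Z^2_t| \le c(\int_0^T|g_t(s)|^2 ds)^{1/2}$ and $\E[\int_0^T\int_0^T|g_t(s)|^2 ds\, dt] < \infty$ follows from the moment bound \eqref{eq:moments V} on $Y$.

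\textbf{Main obstacle.} The delicate part is Step 1 — carefully differentiating $t\mapsto\langle g_t,\boldsymbol{\Psi}_t g_t\rangle_{L^2}$ and getting the three contributions (from $dg_t$, from $\dot{\boldsymbol{\Psi}}_t$, from $d\langle g\rangle_t$) to combine correctly. In particular, the boundary terms $g_t(t)$ arising because $g_t(s)$ is only defined/varying for $s\ge t$ must be tracked with care (this is the role of \eqref{eq:Psi_on_boudary}), and the stochastic Fubini interchange needs the integrability of the kernels against $Y$; getting the Riccati equation $\dot{\boldsymbol{\Psi}}_t = 2\boldsymbol{\Psi}_t\dot{\boldsymbol{\Sigma}}_t\boldsymbol{\Psi}_t$ to exactly cancel the unwanted $\langle g_t, (\cdot) g_t\rangle$-drift — rather than leaving a spurious quadratic term — is where the bookkeeping is heaviest and where I would expect to spend most of the effort. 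The full computation is deferred to the appendix.
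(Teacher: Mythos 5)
Your plan is correct and follows essentially the same route as the paper's proof: compute the dynamics of $t\mapsto\langle g_t,\boldsymbol{\Psi}_t g_t\rangle_{L^2}$ from the dynamics of $g_t(\cdot)$, the strong derivative $\dot{\boldsymbol{\Psi}}_t=2\boldsymbol{\Psi}_t\dot{\boldsymbol{\Sigma}}_t\boldsymbol{\Psi}_t$ and the quadratic covariation, handle the diagonal evaluations via the boundary identity \eqref{eq:Psi_on_boudary}, then apply It\^o to $\Gamma_t=\exp(\phi_t+\langle g_t,\boldsymbol{\Psi}_t g_t\rangle_{L^2})$ so that $\phi$ absorbs the trace term and $-2r$, the residual being $|\lambda_t+CZ^2_t|^2$ (the mismatch $U-(U-2C^\top C)=2C^\top C$ between $\dot{\boldsymbol{\Lambda}}_t$ and $\dot{\boldsymbol{\Sigma}}_t$ producing exactly the $(Z^2_t)^\top CC^\top Z^2_t$ piece), and conclude integrability from the nonpositivity of $\boldsymbol{\Psi}_t$ together with \eqref{assumption:K_stein}, \eqref{eq:bound_psi_leb} and the second-moment bound on $g_t(s)$. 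This is the same decomposition and uses the same key ingredients (Lemma~\ref{L:Psi}\ref{L:Psi1}--\ref{L:Psi3}) as the paper, so no substantive difference to report.
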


\begin{proof}
Set $G_t =  \phi_{t}  + \langle g_t, \boldsymbol{\Psi}_t g_t \rangle_{L^2}$, so that  $\Gamma_t = \exp(G_t)$ and 
\bes{
	\label{eq:Gamma_ito_exp}
	d\Gamma_t =& \;  \Gamma_t \big(d G_t + \frac 1 2 d\langle G \rangle_t \big).
}
To obtain the dynamics of $G$ it suffices to determine the dynamics of the process $t\mapsto \langle g_t, \boldsymbol{\Psi}_t g_t \rangle_{L^2}$. 

\vspace{1mm}

\noindent \textit{Step 1.} In this step we prove that the dynamics of $t\mapsto \langle g_t, \boldsymbol{\Psi}_t g_t \rangle_{L^2}$ is given by
\begin{align}
d\langle g_t, \boldsymbol{\Psi}_t g_t \rangle_{L^2}&= \; \Big(\langle g_t, \dot{\boldsymbol{\Psi}}_t g_t \rangle_{L^2} + \lambda_t^\T \lambda_t +  
2 \lambda_t^\T C Z_t^2    + \mbox{Tr}\big( \boldsymbol{\Psi}_t \dot{\boldsymbol{\Lambda}}_t\big) \Big) dt  +  { (Z_t^2)^\T dW_t}. \label{eq:dynamicsL2inner}
\end{align}
We first note that
\bes{
	\langle g_t, \boldsymbol{\Psi}_t g_t \rangle_{L^2} =& \int_0^T g_t(s)^\T (\boldsymbol{\Psi}_t g_t)(s) ds, 
}
and compute the dynamics of $t\mapsto g_t(s)^\T (\boldsymbol{\Psi}_t g_t)(s)$. For fixed $s\leq T$, it follows from \eqref{eq:processg_quadratic} and the fact that $Y_t=g_t(t)$, that 
\begin{align}
dg_t(s) &= \; - \delta_{t=s}g_t(t) dt+ K(s,t)Dg_t(t) dt + K(s,t)\eta dW_t.
\end{align}
Together with Lemma~\ref{L:Psi}-\ref{L:Psi3}, we deduce  that $t\mapsto (\boldsymbol{\Psi}_t g_t)(s)$ is a semimartingale with the following dynamics 
\begin{align*}
d(\boldsymbol{\Psi}_tg_t)(s) &= (\dot{\boldsymbol{\Psi}}_tg_t)(s)dt +  (\boldsymbol{\Psi}_tdg_t)(s)\\ 
&= (\dot{\boldsymbol{\Psi}}_tg_t)(s)dt -\psi_t(s,t)g_t(t)dt +(\boldsymbol{\Psi}_t K(\cdot,t)D g_t(t))(s) dt   +   (\boldsymbol{\Psi}_t K(\cdot,t)\eta dW_t)(s).   
\end{align*}
Here,  we used the fact that $\id \delta_{t} = 0$: indeed, for every $f \in L^2([0,T], \R^d)$ we have $(\id \delta_{t})(f) = (f(\cdot)\mathbf{1}_{t = \cdot}) =0_{L^2}$.
Moreover, 
\begin{align*}
d\langle g_{\cdot}(s), (\boldsymbol{\Psi}_{\cdot} g_{\cdot})(s) \rangle_t&= \; 
- \tr\big( \Theta^\top \Theta K(s,t)\eta  U\eta^\T K(s,t)^\T \big)dt \\
& \quad\quad  + \;    \int_t^T \tr\big( \psi_{t}(s,u) K({u},t)\eta  U\eta^\T K({s},t)^\T \big) du dt \\
&= \;  \tr\big( \Theta^\top \Theta \dot{\Lambda}_t(s,s) \big)dt - \int_t^T \tr\big( \psi_{t}(s,u) \dot{\Lambda}_t(u,s) \big)du dt \\
&= \;  {- \tr\Big( \big(\boldsymbol{\Psi}_t \dot \Lambda_t (\cdot, s)\big)(s) \Big).} 
\end{align*}
Whence, combining the previous three identities, we get 
\begin{align*}
d\left(g_t(s)^\T (\boldsymbol{\Psi}_t g_t)(s)\right)&=  \; dg_t(s)^\top (\boldsymbol{\Psi}_t g_t)(s)+ g_t(s)^\top d(\boldsymbol{\Psi}_t g_t)(s) + d\langle g_{\cdot}(s), (\boldsymbol{\Psi}_{\cdot} g_{\cdot})(s) \rangle_t  \\
&= \; -\delta_{t=s}g_t(t)^\top (\boldsymbol{\Psi}_t g_t)(s) dt  +  g_t(t)^\top D^\top K(s,t)^\top  (\boldsymbol{\Psi}_t g_t)(s)  dt  \\
& \quad + \;   g_t(s)^\top (\dot{\boldsymbol{\Psi}}_tg_t)(s)dt - g_t(s)^\top\psi_t(s,t)g_t(t)dt + g_t(s)^\top(\boldsymbol{\Psi}_t K(\cdot,t)D g_t(t))(s) dt   \\
&\quad -  \; \tr\Big( \big(\boldsymbol{\Psi}_t \dot \Lambda_t (\cdot, s)\big)(s) \Big) \\
&\quad  + \;  dW_t^\top \eta^\top K(s,t)^\top  (\boldsymbol{\Psi}_t g_t)(s)  +   g_t(s)^\top (\boldsymbol{\Psi}_t K(\cdot,t)\eta dW_t)(s)  \\
&= \;  \Big [ \bold{I}(s)+\bold{II}(s)+ \bold{III}(s)+ \bold{IV}(s)+\bold{V}(s)+ \bold{VI}(s) \Big] dt + \bold{VII}(s) + \bold{VIII}(s). 
\end{align*}
We now integrate  in $s$. First, using Lemma~\ref{L:Psi}-\ref{L:Psi1}  we get that  
\begin{align*}
\int_0^T\big[  \bold{I}(s) +\bold{IV}(s) \big] ds &= \;  -g_t(t)^\top (\boldsymbol{\Psi}_t g_t)(t) - g_t(t)^\top \int_t^T \psi_t(t,u)g_t(u)du \\
&  = \; \lambda_t^\top \lambda_t -2g_t(t)^\top \int_t^T \psi_t(t,u)g_t(u)du \\
&= \;  \lambda_t^\top \lambda_t - 2 g_t(t)^\T((\boldsymbol{\Psi}_t + \Theta^\T \Theta \id)g_t)(t). 
\end{align*}
On the other hand, since,  $\boldsymbol{\Psi}^*=\boldsymbol{\Psi}$, we have 
\begin{align*}
\int_0^T \big[ \bold{II}(s)+ \bold{V}(s) \big]  ds & = \;   2 g_t(t)^\top \Big( \big( (\boldsymbol{K} D)^* \boldsymbol{\Psi}_t \big) g_t \Big)(t).
\end{align*}
Therefore, summing the above, using Lemma~\ref{L:Psi}-\ref{L:Psi2}, and the definition of $\boldsymbol{\hat{K}}$, we get 
\begin{align*}
\int_0^T \big[ \bold{I}(s) + \bold{IV}(s) +\bold{II}(s)+ \bold{V}(s)\big] ds &= \;  
\lambda_t^\top \lambda_t - 2 g_t(t)^\T\Big( \big(\boldsymbol{\Psi}_t + \Theta^\T \Theta \id - (\left( \boldsymbol{K} D\right)^* )\boldsymbol{\Psi}_t \big)g_t\Big)(t)\\
&= \; \lambda_t^\top \lambda_t + 4 g_t(t)^\T((\boldsymbol{K}\eta C^\top \Theta )^*\boldsymbol{\Psi}_t) g_t)(t)\\
&= \; \lambda_t^\top \lambda_t + 2 \lambda_t^\T C Z_t^2. 
\end{align*}
Finally, observing that 
\begin{align}
\int_0^T \bold{III}(s)ds &= \;  \langle g_t, \dot{\boldsymbol{\Psi}}_t g_t\rangle_{L^2}, \quad  \int_0^T \bold{VI}(s) ds \; = \; \mbox{Tr}\Big( \boldsymbol{\Psi}_t \dot{\boldsymbol{\Lambda}}_t \Big), \\
\int_0^T \big[ \bold{VII}(s)+\bold{VIII}(s) \big]ds &= \; \big(Z^2_t\big)^\top dW_t,
\end{align}
we obtain  the claimed dynamics \eqref{eq:dynamicsL2inner}.

\vspace{1mm}

\noindent \textit{Step 2.} Plugging the dynamics \eqref{eq:dynamicsL2inner} in   \eqref{eq:Gamma_ito_exp}  yields 
\bes{
	\frac{d\Gamma_t}{\Gamma_t}  =&  \Big[ \underbrace{\dot{\phi}_{t,T} -\mbox{Tr}\left( \boldsymbol{\Psi}_t \dot{\boldsymbol{\Lambda}}_t\right)}_{\bold 1}
	+ \underbrace{ \langle g_t, \dot{\bold \Psi}_t  g_t \rangle_{L^2}+ \frac{(Z_t^2)^\T U Z_t^2}{2}}_{\bold 2} + \underbrace{\lambda_t^\T \lambda_t 
		+ 2\lambda_t^\T C Z_t^2}_{\bold 3} \Big] dt \\
	&\quad \quad + \; \left(Z^2_t\right)^\T  dW_t .\\
}
By   \eqref{eq:ode_phi_quadratic}, we have:  $\bold{1}$ $=$ $-2r(t)$.  From the definition of $Z^2$, we have
\bes{
	\frac{(Z_t^2)^\T U Z_t^2}{2} &= \; 2 \Big[ \Big( \big( \boldsymbol{\Psi}_t \boldsymbol{K} \eta \big)^* g_t \Big) (t) \Big]^\T U  
	\Big( \big( \boldsymbol{\Psi}_t \boldsymbol{K} \eta \big)^* g_t \Big) (t) \\
	&= \;   - 2\langle g_t , ( \boldsymbol{\Psi}_t \dot {\boldsymbol \Lambda}_t\boldsymbol{\Psi}_t ) g_t\rangle_{L^2}.
}
Thus, using the Riccati relation \eqref{eq:riccati_psiBold}, we get 
\bes{
	\boldsymbol{2} &= \;   \langle g_t, (\dot{\bold \Psi}_t - \boldsymbol{\Psi}_t \dot {\boldsymbol \Lambda}_t\boldsymbol{\Psi}_t)g_t \rangle_{L^2} 
	\; = \; 4\Big[ \Big( \big( \boldsymbol{\Psi}_t \boldsymbol{K} \eta \big)^* g_t \Big) (t) \Big]^\T C^\top C \Big( \big( \boldsymbol{\Psi}_t \boldsymbol{K} \eta \big)^* g_t \Big) (t) \\
	&= \;  (Z^2_t)^\top CC^\top Z^2_t. 
}
Combining $\bold{1}, \bold{2}$ and $\bold{3}$ yields
\bes{
	\label{eq:GammaStein}
	\frac{d \Gamma_t}{\Gamma_t}&=  \;  \Big(-2r(t)  + \left|\lambda_t +  Z^1_t + C Z^2_t \right|^2 \Big) dt + (Z^2_t)^\T  dW_t . 
}
This shows that $(\Gamma,Z^1,Z^2)$ solves \eqref{eq:riccati_sto}. \\

\vspace{1mm}

\noindent \textit{Step 3.} It remains to check  that $\left(\Gamma, Z^1, Z^2\right) \in \S^{\infty}_{\F}([0,T],\R) \times L^2_{\F}([0,T], \R^d) \times L^2_{\F}([0,T], \R^N)$.  For this, 
observe that since $\boldsymbol{\Psi}$ is a nonpositive operator over $[0,T]$,  we have the bound $0<\Gamma_t \leq e^{\int_t^T 2 r(s) ds}$. 
Finally, to show that $Z^2 \in L^2_{\F}([0,T], \R^d)$, it is enough to show that 
\bes{
	\E \Big[ \int_0^T \Big| \int_t^T K(s,t)^\top g_t(s) ds \Big|^2  dt \Big] & < \;  \infty, \\  
	\mbox{ and }   \;   \E \Big[ \int_0^T  \Big| \int_{(t,T]^2} K(v,t)^\top \psi_t(v,s) g_t(s) dv ds \Big|^2  dt \Big] &< \;  \infty.
}
This follows from the fact that $K$ and $\psi$  satisfy \eqref{assumption:K_stein}-\eqref{eq:bound_psi_leb} respectively,  and 
\bes{
	\sup_{0\leq t \leq s\leq T} \E \left[|g_t(s)|^2 \right] & \leq \;  \sup_{s\leq T} |g_0(s)|^2 \Big( 1 + \sup_{s \leq T} \int_0^T |R_D(s,u)|^2 du \Big) \; < \; \infty,
}
where $R_D$ is the resolvent of $KD$.
\end{proof}

\vspace{2mm}

From Theorem~\ref{T:verif}, we can now explicitly solve the Markowitz  problem \eqref{optimization_problem} in the quadratic Volterra  model \eqref{eq:SteinY}, \eqref{eq:Steincorel} and \eqref{eq:Steinprice}, see Theorem~\ref{T:quadracticsolution} below. In order to verify  condition (H2) of Theorem \ref{T:verif}, we will first need the following lemma  whose proof is postponed to Appendix~\ref{seclemma1}.

\begin{lemma}\label{L:lemmacondtheta1}
Let the assumptions of Lemma~\ref{lemma:existence_riccati_sto_quadratic} be in force. {Assume $|D-2\eta C^\T \Theta| \times \|K\|_{L^2([0,T]^2)}^2 < 1$}, then
    \bes{
        \label{eq:bound_lambda_plus_Z}
         |\lambda_s|^2 + {\left|Z^1_s\right|^2 + \left|Z^2_s\right|^2}  \leq \kappa(\Theta)  \left( |g_s(s)|^2 +\int_0^T |g_s(u)|^2 du \right), \quad s\leq T, \quad \Theta \in \R^{d\times N},
    }
    where {$\kappa(\Theta) =  c|\Theta|^2(1 + |\Theta|^4 \hat \kappa(\Theta))$ with $c>0$ independent of $\Theta$ and
    \bes{
         \hat \kappa(\Theta) =  \left(\frac{|f(\Theta)| \times \|K\|_{L^2([0,T]^2)}^2}{ 1 -|f(\Theta)| \times \|K\|_{L^2([0,T]^2)}^2} \right)^4.
    }}
\end{lemma}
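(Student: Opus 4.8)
The plan is to trace through the explicit formulae from Lemma~\ref{lemma:existence_riccati_sto_quadratic} and bound each of $\lambda_s$, $Z^1_s$, $Z^2_s$ in terms of $g_s$, keeping careful track of the dependence on $\Theta$. Since $\lambda_s = \Theta Y_s = \Theta g_s(s)$, we immediately get $|\lambda_s|^2 \leq |\Theta|^2 |g_s(s)|^2$, and $Z^1 = 0$ contributes nothing. The real work is $Z^2_s = 2\big((\boldsymbol{\Psi}_s \boldsymbol{K}\eta)^* g_s\big)(s)$, so I need an operator-norm bound on $\boldsymbol{\Psi}_s$ that is explicit in $\Theta$. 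From the definition \eqref{def:riccati_operator}, $\boldsymbol{\Psi}_s = -(\id - \boldsymbol{\hat K})^{-*}\Theta^\top(\id + 2\Theta\tilde{\boldsymbol{\Sigma}}_s\Theta^\top)^{-1}\Theta(\id-\boldsymbol{\hat K})^{-1}$; since $(U - 2C^\top C) \in \S^N_+$ implies $\tilde{\boldsymbol{\Sigma}}_s$ is nonnegative, the middle factor $(\id + 2\Theta\tilde{\boldsymbol{\Sigma}}_s\Theta^\top)^{-1}$ has operator norm at most $1$. The remaining factors are $(\id - \boldsymbol{\hat K})^{-1}$ with $\hat K = K(D - 2\eta C^\top\Theta)$; writing $f(\Theta) := D - 2\eta C^\top\Theta$, the hypothesis $|f(\Theta)|\cdot\|K\|^2_{L^2([0,T]^2)} < 1$ lets me invoke the Neumann series so that $\|(\id - \boldsymbol{\hat K})^{-1}\|_{\rm op} \leq \big(1 - |f(\Theta)|\,\|K\|^2_{L^2}\big)^{-1}$ (note $|D - 2\eta C^\top\Theta| \leq |f(\Theta)|$ trivially, so the stated assumption is exactly this). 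This gives $\|\boldsymbol{\Psi}_s\|_{\rm op} \leq |\Theta|^2 \big(1 - |f(\Theta)|\,\|K\|^2_{L^2}\big)^{-2}$.

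Next I would combine this with a bound on $\|\boldsymbol{K}\eta\|_{\rm op} \leq |\eta|\,\|K\|_{L^2([0,T]^2)}$ and the pointwise evaluation. The subtlety is that $Z^2_s$ is the value at the point $s$ of an $L^2$ function, not an $L^2$ norm, so I cannot directly use operator norms; instead I should use part \ref{L:Psi1} of Lemma~\ref{L:Psi}, which says $\Theta^\top\Theta\,\id + \boldsymbol{\Psi}_s$ is an integral operator with a kernel $\psi_s$ satisfying the uniform $L^2([0,T]^2)$ bound \eqref{eq:bound_psi_leb}. More precisely, the proof of Lemma~\ref{lemma:existence_riccati_sto_quadratic} already rewrites $Z^2_s$ via $\psi_s$, so I should express $Z^2_s$ as a combination of $\int_s^T K(v,s)^\top g_s(v)\,dv$-type terms (coming from the $-\Theta^\top\Theta\,\id$ piece) and $\int_{(s,T]^2} K(v,s)^\top\psi_s(v,u)g_s(u)\,dv\,du$-type terms; then Cauchy--Schwarz in the kernel variables bounds these by $\big(\int_0^T|K(v,s)|^2 dv\big)^{1/2}$ times $\big(\int_0^T|g_s(u)|^2 du\big)^{1/2}$ (for the first) and by $\|\psi_s\|_{L^2([0,T]^2)}$ times the same integral-of-$g$ (for the second). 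Using the kernel bound \eqref{assumption:K_stein} and \eqref{eq:assumptionkerneldiff1} to make $\sup_s \int_0^T |K(v,s)|^2 dv$ finite, and tracking that $\|\psi_s\|_{L^2([0,T]^2)}$ is controlled by $|\Theta|^2\big(1-|f(\Theta)|\,\|K\|^2_{L^2}\big)^{-2}\cdot(\text{stuff})$, I obtain $|Z^2_s|^2 \leq c\,|\Theta|^2\big(1 + |\Theta|^4\hat\kappa(\Theta)\big)\big(\int_0^T |g_s(u)|^2 du\big)$ where $\hat\kappa(\Theta) = \big(|f(\Theta)|\,\|K\|^2_{L^2}/(1-|f(\Theta)|\,\|K\|^2_{L^2})\big)^4$; the fourth power arises because $\psi_s$ contains two copies of $(\id - \boldsymbol{\hat K})^{-1}$, each contributing a squared-norm factor when one passes from the operator to its kernel and back.

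Finally I would collect the three pieces: $|\lambda_s|^2 \leq |\Theta|^2|g_s(s)|^2$, $|Z^1_s|^2 = 0$, and the bound on $|Z^2_s|^2$ just derived, add them, and absorb all constants into a single $\kappa(\Theta) = c|\Theta|^2(1 + |\Theta|^4\hat\kappa(\Theta))$ with $c$ independent of $\Theta$, which is exactly \eqref{eq:bound_lambda_plus_Z}. I expect the main obstacle to be the bookkeeping in the second paragraph: getting the \emph{pointwise} evaluation $(\cdot)(s)$ under control requires going through the kernel representation $\psi_s$ of Lemma~\ref{L:Psi}\ref{L:Psi1} rather than just operator norms, and one must verify that the constant coming from \eqref{eq:bound_psi_leb} genuinely has the claimed explicit form in $\Theta$ — this means I need a slightly sharper version of Lemma~\ref{L:Psi}\ref{L:Psi1} that exhibits the $\|\psi_s\|_{L^2([0,T]^2)}$ bound with its $\Theta$-dependence made explicit, which should follow by redoing that part of the proof while carrying the Neumann-series estimate for $(\id - \boldsymbol{\hat K})^{-1}$ rather than discarding it. Everything else is routine Cauchy--Schwarz and the already-established moment bound $\sup_{t\leq s\leq T}\E[|g_t(s)|^2] < \infty$.
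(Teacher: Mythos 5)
Your opening moves coincide with the paper's own proof in Appendix~\ref{seclemma1}: $|\lambda_s|^2=|\Theta g_s(s)|^2$, $Z^1\equiv 0$, the split $\boldsymbol{\Psi}_s=-\Theta^\top\Theta\,\id+\psi_s^{\rm op}$ from Lemma~\ref{L:Psi}\ref{L:Psi1}, and Cauchy--Schwarz in the kernel variables against $\int_0^T|g_s(u)|^2du$ using \eqref{eq:assumptionkerneldiff1}. You are also right that operator norms alone cannot handle the pointwise evaluation, and that everything hinges on a $\Theta$-explicit bound for $\sup_s\|\psi_s\|_{L^2([0,T]^2)}$.

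That last bound is precisely where your proposal has a genuine gap. You claim it "should follow by redoing Lemma~\ref{L:Psi}\ref{L:Psi1} while carrying the Neumann-series estimate for $(\id-\boldsymbol{\hat K})^{-1}$", having disposed of the middle factor $(\id+2\Theta\tilde{\boldsymbol{\Sigma}}_s\Theta^\top)^{-1}$ by the operator-norm bound $\le 1$. But the kernel $\psi_s$ (see \eqref{eq:boundary}) contains the resolvent $R^\Theta_s$ of $-2\Theta\tilde{\boldsymbol{\Sigma}}_s\Theta^\top$, i.e.\ the difference $(\id+2\Theta\tilde{\boldsymbol{\Sigma}}_s\Theta^\top)^{-1}-\id$, and an operator-norm bound on the inverse gives no control of the $L^2([0,T]^2)$ (Hilbert--Schmidt) norm of the term $\Theta^\top\boldsymbol{R}^\Theta_s\Theta$, which is exactly what your Cauchy--Schwarz step needs. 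The paper closes this with two ingredients absent from your sketch: (a) a Mercer/spectral estimate (equivalently, the factorization $\boldsymbol{R}^\Theta_s=-2\Theta\tilde{\boldsymbol{\Sigma}}_s\Theta^\top(\id+2\Theta\tilde{\boldsymbol{\Sigma}}_s\Theta^\top)^{-1}$ combined with your norm-$\le 1$ observation) yielding $\sup_s\|R^\Theta_s\|^2_{L^2([0,T]^2)}\le 4|\Theta|^4\sup_s\|\tilde\Sigma_s\|^2_{L^2([0,T]^2)}$, and (b) the auxiliary Lemma~\ref{l:bound_sig_bar}, which bounds $\sup_s\|\tilde\Sigma_s\|^2_{L^2([0,T]^2)}\le c\,(1+\hat\kappa(\Theta))$ by expanding $\tilde\Sigma_s=\Sigma_s+\hat R\star\Sigma_s+\Sigma_s\star\hat R^*+\hat R\star\Sigma_s\star\hat R^*$ and estimating $\|\hat R\|_{L^2([0,T]^2)}$ via the Neumann series under the smallness assumption. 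This is also where the claimed constants really come from: the fourth power in $\hat\kappa(\Theta)$ is $\|\hat R\|^4_{L^2}$ produced by the term $\hat R\star\Sigma_s\star\hat R^*$ inside $\|\tilde\Sigma_s\|^2$, and the factor $|\Theta|^4$ multiplying $\hat\kappa(\Theta)$ comes from $\|2\Theta\tilde\Sigma_s\Theta^\top\|^2_{L^2}$ --- not, as you assert, from "two copies of $(\id-\boldsymbol{\hat K})^{-1}$ each contributing a squared norm", an accounting that would only produce $\|\hat R\|^2$ and no $|\Theta|^4$ at all. Without an estimate of type (a)--(b) your $\kappa(\Theta)$ is unsubstantiated; once they are supplied, the rest of your argument goes through essentially as in the paper.
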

\begin{proof}
	See Appendix~\ref{seclemma1}.
\end{proof}

We now arrive to the  main result of this section. 

\begin{theorem}\label{T:quadracticsolution}
Fix a kernel $K$ as in Definition~\ref{D:kernelvolterra} satisfying \eqref{eq:assumptionkerneldiff1} and assume  that $(U-2C^\top C) \in \S^N_+$. Let $a(p)$ be as in \eqref{eq:constap}  and $\kappa$ the function defined in Lemma \ref{L:lemmacondtheta1}.   Assume that there exists $\Theta \in \R^{d\times N}$  such that 
\begin{align}\label{eq:notsofriendlytheta}
\E\Big[ \exp \Big(  a(p){\kappa(\Theta)}  \int_0^T \big(|g_s(s)|^2 + \int_0^T |g_s(u)|^2 du\big)ds \Big) \Big] \; < \; \infty,
\end{align}
 for some $p>2$.
Assume that $g^i_0(0)>0$ for some $i\leq d$.
    Then, the optimal investment strategy for the Markowitz  problem \eqref{optimization_problem} is  given by the admissible control
    \bes{
        \label{eq:optimal_control_2}
        \c^{*}_t &= \;  -    \Big( \big({ \Theta } + 2  C  \left[\boldsymbol{\Psi}_{t} \boldsymbol{K}\eta \right]^*  \big) g_t \Big)(t) \Big(X^{\c^*}_t - \xi^* e^{-\int_t^T r(s) ds} \Big), 
    }
where $\xi^*$ is defined in \eqref{eq:eta_star},  and the optimal value is given by \eqref{eq:value_final} with $\Gamma_0$ as in \eqref{eq:Gamma_quadratic}. 
\end{theorem}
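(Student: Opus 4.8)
The plan is to verify the hypotheses of Theorem~\ref{T:verif} for the quadratic Volterra model and then read off the optimal strategy from \eqref{eq:optimal_control_final}. First I would invoke Lemma~\ref{lemma:existence_riccati_sto_quadratic}, which under the standing assumptions on $K$ and $(U-2C^\top C)\in \S^N_+$ provides a solution triplet $(\Gamma,Z^1,Z^2) \in \S^{\infty}_{\F}([0,T],\R)\times L^2_{\F}([0,T],\R^d)\times L^2_{\F}([0,T],\R^N)$ to the Riccati BSDE \eqref{eq:riccati_sto}, with $\Gamma$ given by \eqref{eq:Gamma_quadratic}, $Z^1=0$, and $Z^2_t = 2\big((\boldsymbol{\Psi}_t\boldsymbol{K}\eta)^*g_t\big)(t)$. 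Substituting $\lambda_t = \Theta Y_t = \Theta g_t(t)$ and this expression for $Z^2$ into the feedback control \eqref{eq:optimal_control_final} immediately yields \eqref{eq:optimal_control_2}, since $\lambda_t + Z^1_t + CZ^2_t = \big(\Theta g_t\big)(t) + 2C\big((\boldsymbol{\Psi}_t\boldsymbol{K}\eta)^*g_t\big)(t) = \big((\Theta + 2C[\boldsymbol{\Psi}_t\boldsymbol{K}\eta]^*)g_t\big)(t)$.

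Next I would check condition (H1). Since $\boldsymbol{\Psi}_t$ is a nonpositive operator on $[0,T]$ (established in Lemma~\ref{lemma:existence_riccati_sto_quadratic}), the representation $\Gamma_t = \exp(\phi_t + \langle g_t, \boldsymbol{\Psi}_t g_t\rangle_{L^2})$ together with the terminal condition and the structure of $\phi$ gives $0 < \Gamma_t \le e^{2\int_t^T r(s)\,ds}$ for all $t\le T$. To get the strict inequality $\Gamma_0 < e^{2\int_0^T r(s)\,ds}$ required in (H1), I would use the assumption that $g^i_0(0)>0$ for some $i\le d$: by continuity of $Y$ and $g$, the quadratic form $\langle g_0, \boldsymbol{\Psi}_0 g_0\rangle_{L^2}$ is strictly negative on a set of positive measure near $0$ (using that $\Theta^\top\Theta$, hence the kernel $\psi_0$, does not vanish identically in the relevant directions), so $\Gamma_0 < e^{2\int_0^T r(s)\,ds}$. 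This is the step requiring a little care — one must argue that the operator $\boldsymbol{\Psi}_0$ is not degenerate along the direction picked out by $g_0$, which I expect follows from the explicit form \eqref{def:riccati_operator} and the positivity of $\big(\id + 2\Theta\tilde{\boldsymbol{\Sigma}}_0\Theta^\top\big)^{-1}$ combined with $\Theta$ acting nontrivially.

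Then I would verify condition (H2). By Lemma~\ref{L:lemmacondtheta1} (whose hypothesis $|D-2\eta C^\top\Theta|\cdot\|K\|_{L^2([0,T]^2)}^2 < 1$ I would note is implied by, or assumed alongside, \eqref{eq:notsofriendlytheta}), we have the pointwise bound
\begin{align}
|\lambda_s|^2 + |Z^1_s|^2 + |Z^2_s|^2 \le \kappa(\Theta)\Big(|g_s(s)|^2 + \int_0^T |g_s(u)|^2\,du\Big), \quad s\le T.
\end{align}
Multiplying by $a(p)$ and exponentiating, the assumption \eqref{eq:notsofriendlytheta} gives exactly
\begin{align}
\E\Big[\exp\Big(a(p)\int_0^T\big(|\lambda_s|^2 + |Z^1_s|^2 + |Z^2_s|^2\big)\,ds\Big)\Big] \le \E\Big[\exp\Big(a(p)\kappa(\Theta)\int_0^T\big(|g_s(s)|^2 + \int_0^T|g_s(u)|^2\,du\big)\,ds\Big)\Big] < \infty,
\end{align}
which is precisely \eqref{eq:assumption_novikov}. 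With (H1) and (H2) confirmed, Theorem~\ref{T:verif} applies: $\alpha^*$ in \eqref{eq:optimal_control_2} is admissible and optimal, and the optimal value is \eqref{eq:value_final} with $\Gamma_0$ read from \eqref{eq:Gamma_quadratic}.

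The main obstacle I anticipate is the strict inequality in (H1): establishing that $g^i_0(0)>0$ for a single index actually forces $\langle g_0,\boldsymbol{\Psi}_0 g_0\rangle_{L^2} < 0$ (and not merely $\le 0$) requires unwinding the operator definitions to see that $\boldsymbol{\Psi}_0$ is strictly negative definite on the one-dimensional subspace spanned by the relevant component of $g_0$ near $t=0$. Everything else is bookkeeping: substitution for the control formula, the nonpositivity bound for the upper estimate on $\Gamma$, and a direct application of Lemma~\ref{L:lemmacondtheta1} followed by the exponential-moment assumption for (H2).
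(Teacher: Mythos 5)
Your proposal is correct and follows essentially the same route as the paper: invoke Lemma~\ref{lemma:existence_riccati_sto_quadratic} for the triplet $(\Gamma,Z^1,Z^2)$, substitute $\lambda_t=\Theta g_t(t)$ and $Z^2$ into \eqref{eq:optimal_control_final} to get \eqref{eq:optimal_control_2}, obtain (H2) from Lemma~\ref{L:lemmacondtheta1} together with \eqref{eq:notsofriendlytheta} (noting, as you do, that the hypothesis $|D-2\eta C^\T\Theta|\,\|K\|^2_{L^2([0,T]^2)}<1$ is implicitly needed for $\kappa(\Theta)$ to be finite), and conclude by Theorem~\ref{T:verif}. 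The only divergence is in (H1): the paper writes $\Gamma_0$ as the Laplace transform $\E\big[\exp\big(\int_0^T(2r(s)-|\lambda_s+Z^1_s+CZ^2_s|^2)ds\big)\big]$ and asserts strictness from $g^i_0(0)>0$, whereas you work with the deterministic formula $\Gamma_0=\exp(\phi_0+\langle g_0,\boldsymbol{\Psi}_0 g_0\rangle_{L^2})$ and sketch strict negativity of the quadratic form; both hinge on the same non-degeneracy of $\Theta$ acting on $g_0$, which the paper leaves just as implicit as you do, so there is no gap beyond what the original proof itself glosses over.
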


\begin{proof}
First note that  under the specification \eqref{eq:Gamma_quadratic}, and $\lambda_t = \Theta Y_t = \Theta g_t(t)$, the candidate for the optimal feedback control defined in 
\eqref{eq:optimal_control_final}  takes the form
    \bes{
    \c^*_t &= \;   -  \left(\lambda_t + { Z^1_t + C Z^2_t}{}\right) \big(X^{\alpha^*}_t  - \xi e^{-\int_t^T r(s)  ds}\big)  \\
    &= \;  \Big( \big( {\Theta } + 2  C \left[\boldsymbol{\Psi}_{t} \boldsymbol{K}\eta \right]^* \big) g_t \Big)(t) \big(X^{\alpha^*}_t  - \xi e^{-\int_t^T r(s) ds}\big).
    }
It thus suffices to check that the assumptions of Theorem \ref{T:verif} are verified to ensure that $\c^*(\xi^*)$ is optimal and to get that  \eqref{eq:value_final} is the optimal value.
    The existence of a solution triplet $ (\Gamma, Z^1, Z^2) \in { \S^{\infty}_{\F}([0,T], \R) \times L^2_{\F}([0,T], \R^d) \times L^2_{\F}([0,T], \R^N)}$ to the stochastic backward Riccati equation \eqref{eq:riccati_sto} is ensured by Lemma \ref{lemma:existence_riccati_sto_quadratic}. 
    In addition, we have 
   \bes{
        \Gamma_0 &= \; \E\Big[ e^{\int_0^T \big( 2 r(s)  - \left|\lambda_s +  Z^1_s + C Z^2_s \right|^2 \big) ds} \Big]  
        \; = \; \E\Big[ e^{ \int_0^T \big[ 2 r(s) - \big|  \big( \big( {\Theta } + 2  C \left[\boldsymbol{\Psi}_{s} \boldsymbol{K}\eta \right]^* \big) g_s \big)(s) \big|^2 \big]  ds } \Big],
    }
 which implies that $\Gamma_0 < e^{2\int_0^T r(s) ds}$ { since $g^i_0(0)>0$ for some $i\leq d$ by assumption.} Thus  condition (H1)  of Theorem \ref{T:verif} is verified. Condition (H2) follows directly from Lemma~\ref{L:lemmacondtheta1} and \eqref{eq:notsofriendlytheta}. The proof is complete.
\end{proof}

\vspace{1mm}

The following lemma  provides a general sufficient condition for the existence of $\Theta$ satisfying \eqref{eq:notsofriendlytheta}. Without loss of generality, we  assume that $D=0$ in \eqref{eq:SteinY}.\footnote{If $D \neq 0$, then making use of the resolvent kernel $R_D$ of $KD$, we reduce to the case $D=0$ as illustrated on \eqref{eq:Yclosed} by working on the kernel $(K+R_D)$ instead of $K$.} 
{Define
     $Z(s,u) = (\frac 1 T g_s(s), g_s(u))^\top$ for any $s,u \in [0,T]$, which we view as a random variable in $L^2([0,T]^2, \R^{2N})$. Its mean is given by $\mu(s,u)=\E[Z(s,u)]=(\frac 1 T g_0(s),g_0(u))^\top$ and its covariance kernel  by
       \begin{align}
            \bar \Sigma((s,u),(t,r)) = \E\left[ \Big(Z(s,u) - \E(Z(s,u))\Big)    \Big(Z(t,r) - \E(Z(t,r))\Big) ^\T\right], \qquad s,u,t,r \in [0,T],
       \end{align}
    which is symmetric and nonnegative. {It follows from assumption \eqref{assumption:K_stein} that $\bar \Sigma$ is continuous  on $[0,T]^4$} so that an application of Mercer's theorem,  see \citet[Theorem 1 p.208]{shorack2009empirical},
yields the existence of a countable orthonormal basis $(e^n)_{n \geq 1}$ in $L^2([0,T]^2, \R^{2N})$ and a non increasing sequence of nonnegative numbers $(\lambda^n)_{n\geq 1}$, with $\lambda^n \to 0$, as $n \to \infty$, such that 
 \bes{\label{eq:decompbarsigma}
        \bar \Sigma((s,u),(t,r)) = \sum_{n\geq 1} \lambda^n e^n(s,u) e^n(t,r)^\T. 
    }  
In addition, we observe by virtue of \eqref{assumption:K_stein} that 
\begin{align}\label{eq:sumlambda_}
\sum_{n \geq 1} \lambda^n=\tr (\bold{\bar \Sigma}) =& \frac{1}{T} \int_0^T \left( \int_0^s \tr\left( K(s,z)\eta U \eta^T K(s,z)^\T  \right) dz\right)ds  \\
& + \int_0^T \left( \int_0^T \left( \int_0^u \tr\left( K(s,z)\eta U \eta^T K(s,z)^\T  \right) dz\right)ds \right) du < \infty. 
\end{align}
}


\begin{lemma}\label{L:lemmacondtheta2} Set $D=0$. Let $a>0$ be such that $2a < \frac 1 {\lambda^1}$.
Then, 
        \begin{align*}
        \E\Big[\exp\Big( {a} \int_0^T\big( |g_s(s)|^2 + \int_0^T |g_s(u)|^2 du \big)ds\Big)  \Big] \; < \; \infty.
        \end{align*}
        In particular, \eqref{eq:notsofriendlytheta} holds if {$2a(p)\kappa(\Theta)<\frac 1 {\lambda^1}$} for some $p>2$.
\end{lemma}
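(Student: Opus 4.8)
The plan is to reduce the exponential-moment bound to a Gaussian-type computation using the Mercer decomposition \eqref{eq:decompbarsigma}. First I would observe that, since $D=0$, the process $g_t(s)$ is a Gaussian process: from \eqref{eq:processg_quadratic} with $D=0$ we have $g_t(s) = g_0(s) + \int_0^t K(s,u)\eta\, dW_u$, which is a Wiener integral and hence Gaussian, conditionally degenerate structure notwithstanding. Consequently the $L^2([0,T]^2,\R^{2N})$-valued random element $Z(s,u) = (\tfrac1T g_s(s), g_s(u))^\top$ is a Gaussian random variable with mean $\mu$ and covariance operator $\bold{\bar\Sigma}$. The key quantity to control is $\int_0^T \big(|g_s(s)|^2 + \int_0^T |g_s(u)|^2 du\big)\, ds$, which I would rewrite in terms of $Z$: up to the deterministic rescaling by $T^2$ in the first coordinate, this is comparable to $\|Z\|_{L^2([0,T]^2,\R^{2N})}^2$. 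More precisely, $\int_0^T \int_0^T |Z(s,u)|^2\, ds\, du = \tfrac1{T^2}\int_0^T\int_0^T |g_s(s)|^2\,ds\,du + \int_0^T\int_0^T |g_s(u)|^2\,ds\,du = \tfrac1T \int_0^T |g_s(s)|^2 ds + \int_0^T\int_0^T |g_s(u)|^2 \,du\,ds$, so $\int_0^T(|g_s(s)|^2+\int_0^T|g_s(u)|^2du)\,ds \leq \max(T,1)\,\|Z\|_{L^2}^2$.

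Next I would expand $Z$ in the Mercer basis: write $Z = \sum_{n\geq1}(\langle Z, e^n\rangle_{L^2}) e^n$, where the coefficients $\xi^n := \langle Z - \mu, e^n\rangle_{L^2}$ are independent centered Gaussians with variances $\lambda^n$ (this is the Karhunen–Loève expansion, a direct consequence of \eqref{eq:decompbarsigma} and Gaussianity). Then $\|Z\|_{L^2}^2 = \sum_{n\geq1}(\xi^n + m^n)^2$ where $m^n = \langle\mu,e^n\rangle_{L^2}$, with $\sum_n (m^n)^2 = \|\mu\|_{L^2}^2 < \infty$ by \eqref{assumption:K_stein}. The exponential moment factorizes:
\begin{align*}
\E\Big[\exp\Big(a\max(T,1)\sum_{n\geq1}(\xi^n+m^n)^2\Big)\Big] = \prod_{n\geq1}\E\Big[\exp\big(a\max(T,1)(\xi^n+m^n)^2\big)\Big].
\end{align*}
Each factor is the exponential moment of a noncentral chi-square with one degree of freedom, which is finite precisely when $2a\max(T,1)\lambda^n < 1$, and in that case equals $(1-2a\max(T,1)\lambda^n)^{-1/2}\exp\big(\tfrac{a\max(T,1)(m^n)^2}{1-2a\max(T,1)\lambda^n}\big)$. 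Since $(\lambda^n)$ is non-increasing with $\lambda^1$ the largest, the condition $2a\lambda^1 < 1$ (after absorbing the $\max(T,1)$ constant into $a$ — more carefully, one should state the lemma with the sharp constant, so I would take $2a < 1/(\lambda^1 \max(T,1))$, or simply note the constant can be absorbed) guarantees every factor is finite. For convergence of the infinite product I would check $\sum_n \log(1-2a\lambda^n)^{-1/2} < \infty$, which follows from $\sum_n\lambda^n<\infty$ (by \eqref{eq:sumlambda_}) since $-\log(1-x) \leq Cx$ for $x$ bounded away from $1$, and similarly $\sum_n \tfrac{a(m^n)^2}{1-2a\lambda^n} \leq C\sum_n(m^n)^2 < \infty$. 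Hence the product converges and the exponential moment is finite.

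Finally, the "in particular" clause: apply the displayed bound with $a$ replaced by $a(p)\kappa(\Theta)$; the hypothesis $2a(p)\kappa(\Theta) < 1/\lambda^1$ then gives finiteness of $\E[\exp(a(p)\kappa(\Theta)\int_0^T(|g_s(s)|^2+\int_0^T|g_s(u)|^2du)ds)]$, which is exactly \eqref{eq:notsofriendlytheta}. The main obstacle I anticipate is bookkeeping the deterministic constant $\max(T,1)$ (or whatever normalization makes $\|Z\|_{L^2}^2$ exactly match the integral in \eqref{eq:notsofriendlytheta}) so that the stated threshold $2a < 1/\lambda^1$ is literally correct; this is a matter of tracking the $1/T$ factors in the definition of $Z$ rather than a genuine difficulty. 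A secondary point requiring a line of justification is the Gaussianity of $g_t(s)$ and the independence of the Karhunen–Loève coefficients, which holds because $g$ is driven by the Wiener integral $\int_0^t K(s,u)\eta\,dW_u$ with $W$ (though possibly correlated across components) still a Gaussian process.
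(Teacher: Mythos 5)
Your proposal matches the paper's proof essentially step for step: Karhunen--Lo\`eve expansion of $Z$ in the Mercer basis of $\bold{\bar\Sigma}$ from \eqref{eq:decompbarsigma}, factorization of the exponential moment into noncentral chi-square factors, and convergence of the infinite product via $\sum_{n\geq 1}\lambda^n<\infty$ together with Parseval's identity for the mean term. Your remark about the $1/T$ versus $1/\sqrt{T}$ normalization of the first component of $Z$ is well placed --- the paper treats \eqref{eq:Z_norm} as an exact identity, which is only literal with the $1/\sqrt{T}$ scaling --- but this is a cosmetic constant that does not alter the argument.
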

\begin{proof}
We refer to Appendix~\ref{seclemma2}. 
\end{proof}

\begin{remark}
\label{rk:cond_theta}
In practice, {as $\lambda_1 \leq \tr(\bold{\bar \Sigma})$}, it follows from Lemma~\ref{L:lemmacondtheta2}  and \eqref{eq:sumlambda_}, that a sufficient condition for the existence of 
$\Theta$ satisfying \eqref{eq:notsofriendlytheta}   would be $$2a(p)\kappa(\Theta)<\frac 1 {{\tr(\bold{\bar \Sigma})}  }.$$
{For instance, for the fractional convolution kernel $K(t,s)$ $=$ $\bm 1_{s\leq t}(t-s)^{H-1/2}$, we have $\int_0^T \int_0^T |K(t,s)|^2 ds dt$ $=$ $T^{2H+1}$. Consequently $\tr(\bold{\bar{\Sigma}}) \geq \eta^2(T^{2H} + T^{2(H+1)})$ and the condition on $\Theta$ reads
\bes{
    \kappa(\Theta) \leq (2a(p)\eta^{2}(T^{2H} + T^{2(H+1)}))^{-1}. 
}
}
\end{remark}

\vspace{1mm}

The following corollary treats the standard Markovian and semimartingale case for $K=I_N$ and shows how to recover the well-known formulae in the spirit of  \cite{chiu2014mean}.

\begin{corollary}\label{C:chui}
 Set $K(t,s) = I_N 1_{s \leq t}$ and {$g_0(t)\equiv Y_0$ for some $Y_0 \in \R^N$}. Then,  the solution to the Riccati BSDE can be re-written in the form 
    \bes{
        \label{eq:classic_Z2}
        \Gamma_t  = \exp\big( \phi_{t}  + Y_t^\T P_t Y_t \big),  \quad \mbox{and} \quad {Z^2_t = 2 \eta^\T  P_t Y_t  ,}
    }
    where  $P:[0,T]\mapsto \R^{N\times N}$ and $\phi$ solve the conventional system of $N\times N$-matrix Riccati equations
\bec{
        \dot{P}_t &= \;  { \Theta^\T \Theta} + P_t (2\eta C^\T \Theta-D) +  (2\eta C^\T \Theta-D)^\T P_t +  2 P_t(\eta (U - 2 C^\T C)\eta^\T)P_t, \\
        P_T &= \; 0,\\
          \dot{\phi}_t &= \;  - 2r(t) - \tr(P_t \eta U \eta^\T), \quad t \in [0,T], \\
        \phi_T &= \; 0.
    }
    {Furthermore, the optimal control reads 
    \bes{
        \label{eq:classic_alpha}
        \c^{*}_t &= \;  -    \Big( { \Theta } + 2  C  (D\eta)^\T   P_t Y_t     \Big) \Big(X^{\c^*}_t - \xi^* e^{-\int_t^T r(s) ds} \Big).
    }
    }
\end{corollary}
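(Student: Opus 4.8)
The plan is to specialize the general quadratic Volterra framework to the convolution kernel $K(t,s)=I_N\mathbf 1_{s\le t}$, so that all the integral operators become finite-dimensional matrix operations and the operator Riccati equation of Lemma~\ref{L:Psi}-\ref{L:Psi3} collapses to an ordinary matrix Riccati ODE. First I would observe that for $K=I_N\mathbf 1_{s\le t}$ the adjusted forward process $g_t(s)$ from \eqref{eq:processg_quadratic} simplifies: since $g_0\equiv Y_0$ and $K(s,u)=I_N\mathbf 1_{u\le s}$, one gets $g_t(s)=Y_0+\int_0^t DY_u\,du+\int_0^t\eta\,dW_u=Y_t$ for all $s\ge t$, i.e. $g_t$ is the constant function $s\mapsto Y_t$ on $[t,T]$ (and zero on $[0,t)$). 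This is the key structural simplification: the ``infinite-dimensional state'' $g_t$ is driven entirely by the finite-dimensional $Y_t$.

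Next I would make the ansatz $\boldsymbol\Psi_t=$ (integral operator with kernel $(s,u)\mapsto p_t(s,u)$) where $p_t(s,u)$ is \emph{constant} in $(s,u)$ on $[t,T]^2$, say $p_t(s,u)=\mathbf 1_{s>t}\mathbf 1_{u>t}\,Q_t$ for some matrix $Q_t\in\R^{N\times N}$; then $\langle g_t,\boldsymbol\Psi_t g_t\rangle_{L^2}=(T-t)^2\,Y_t^\T Q_t Y_t$, and absorbing the scalar factor we set $P_t:=(T-t)^2 Q_t$ (adjusting so that $\Gamma_t=\exp(\phi_t+Y_t^\T P_tY_t)$). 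I would then translate the operator Riccati equation \eqref{eq:riccati_psiBold}, together with the boundary piece \eqref{eq:Psi_on_boudary} and the definitions of $\hat K=K(D-2\eta C^\T\Theta)$, $\dot{\boldsymbol\Sigma}_t$ (kernel $-K(s,t)\eta(U-2C^\T C)\eta^\T K(u,t)^\T$), and $\dot{\boldsymbol\Lambda}_t$, into the corresponding matrix identities: the $\star$-product with $K=I_N\mathbf 1_{s\le t}$ becomes ordinary integration, $\hat K$-convolution produces the drift terms $P_t(2\eta C^\T\Theta-D)+(2\eta C^\T\Theta-D)^\T P_t$, the $\dot{\boldsymbol\Sigma}_t$ term produces the quadratic term $2P_t(\eta(U-2C^\T C)\eta^\T)P_t$, and $\Theta^\T\Theta\,\id$ contributes the $\Theta^\T\Theta$ source term; the $\phi$-equation \eqref{eq:ode_phi_quadratic} becomes $\dot\phi_t=-2r(t)-\tr(P_t\eta U\eta^\T)$ once one recognizes $\mbox{Tr}(\boldsymbol\Psi_t\dot{\boldsymbol\Lambda}_t)$ reduces to $-\tr(P_t\eta U\eta^\T)$ (minus a term that cancels against $\Theta^\T\Theta$ contributions coming from the first trace in \eqref{eq:ode_phi_quadratic}). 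The boundary condition $\boldsymbol\Psi_T$ becoming $-(\id-\hat{\boldsymbol K})^{-*}\Theta^\T\Theta(\id-\hat{\boldsymbol K})^{-1}$ gives, at $t=T$, simply $P_T=0$ since the factor $(T-t)^2$ vanishes.

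Then I would compute $Z^2_t$ from \eqref{eq:Gamma_quadratic}: $Z^2_t=2\big((\boldsymbol\Psi_t\boldsymbol K\eta)^*g_t\big)(t)$. With $g_t\equiv Y_t$ on $[t,T]$, $(\boldsymbol K\eta)$ acting on the constant function and then $\boldsymbol\Psi_t$ (constant kernel $Q_t$) and evaluating the adjoint at the point $t$, the $(T-t)$-powers combine with $Q_t=(T-t)^{-2}P_t$ to leave exactly $Z^2_t=2\eta^\T P_tY_t$; similarly the optimal control \eqref{eq:optimal_control_2}, $\c^*_t=-\big((\Theta+2C[\boldsymbol\Psi_t\boldsymbol K\eta]^*)g_t\big)(t)(X^{\c^*}_t-\xi^*e^{-\int_t^Tr})$, reduces to $\c^*_t=-(\Theta+2C(D\eta)^\T P_tY_t)(X^{\c^*}_t-\xi^*e^{-\int_t^Tr(s)ds})$ after carrying the same bookkeeping of the $\star$-convolution constants. (Here one should double-check whether the factor is $(D\eta)^\T$ or $\eta^\T$ times a deterministic scalar — this matches the statement's $(D\eta)^\T$, coming from how $\boldsymbol K\eta$ interacts with the $D$-dependence hidden in $\boldsymbol{\hat K}$, and I would verify this carefully.)

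The main obstacle I anticipate is the careful bookkeeping of the polynomial-in-$(T-t)$ prefactors when passing between the operator $\boldsymbol\Psi_t$ and the matrix $P_t$, and in particular verifying that the constant-kernel ansatz is \emph{consistent} with the operator Riccati equation \eqref{eq:riccati_psiBold} — i.e. that $\dot{\boldsymbol\Psi}_t=2\boldsymbol\Psi_t\dot{\boldsymbol\Sigma}_t\boldsymbol\Psi_t$ is preserved within the class of kernels constant on $[t,T]^2$, and that the resulting scalar ODE for $Q_t$ (equivalently $P_t$) is precisely the stated matrix Riccati equation with $P_T=0$. A clean alternative that sidesteps some of this is to avoid the operator machinery entirely: since $K=I_N\mathbf 1_{s\le t}$ makes $Y$ a genuine Markovian Itô diffusion $dY_t=DY_t\,dt+\eta\,dW_t$, one can directly make the ansatz $\Gamma_t=\exp(\phi_t+Y_t^\T P_tY_t)$, apply It\^o's formula to match \eqref{eq:riccati_sto} term by term (using $\lambda_t=\Theta Y_t$, $d\langle W\rangle_t=U\,dt$, and the correlation matrix $C$), and read off the Riccati system for $(P,\phi)$ and the expressions for $Z^1=0$, $Z^2_t=2\eta^\T P_tY_t$, $\c^*_t$; this is the route I would actually write up, invoking Lemma~\ref{lemma:existence_riccati_sto_quadratic} only to guarantee $(\Gamma,Z^1,Z^2)\in\S^\infty_\F\times L^2_\F\times L^2_\F$ and then checking that the Markovian ansatz coincides with \eqref{eq:Gamma_quadratic} under $K=I_N$.
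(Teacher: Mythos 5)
Your opening observation is the right one ($g_t(s)=\mathbf 1_{t\le s}Y_t$ when $K=I_N\mathbf 1_{s\le t}$ and $g_0\equiv Y_0$), but the operator route you build on it does not work as described. Your ansatz that $\boldsymbol{\Psi}_t$ is an integral operator whose kernel is constant equal to $Q_t$ on $[t,T]^2$ is inconsistent with the actual $\boldsymbol{\Psi}_t$ of \eqref{def:riccati_operator}: by Lemma~\ref{L:Psi}-\ref{L:Psi1}, $\boldsymbol{\Psi}_t=-\Theta^\T\Theta\,\id+(\text{integral operator with kernel }\psi_t)$, so it is not a pure integral operator at all, and the kernel part $\psi_t$ is built from the resolvent of $\hat K(s,u)=\mathbf 1_{u\le s}(D-2\eta C^\T\Theta)$, which depends on $s-u$ and is not constant on $[t,T]^2$ (nor is the terminal operator $\boldsymbol{\Psi}_T=-(\id-\hat{\boldsymbol K})^{-*}\Theta^\T\Theta(\id-\hat{\boldsymbol K})^{-1}$ in your constant-kernel class, so the class is not preserved by the flow \eqref{eq:riccati_psiBold}; your ``$P_T=0$ because $(T-t)^2$ vanishes'' gets the right answer for the wrong reason). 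The paper's proof needs no structural assumption on the kernel: it only uses that $g_t$ is constant on $[t,T]$ and sets $P_t:=\int_t^T(\boldsymbol{\Psi}_t\bold 1_t)(s)\,ds$, so that $\langle g_t,\boldsymbol{\Psi}_t g_t\rangle_{L^2}=Y_t^\T P_tY_t$; differentiating this integral and invoking Lemma~\ref{L:Psi}-\ref{L:Psi1}-\ref{L:Psi2}-\ref{L:Psi3} (boundary identity, kernel representation, operator Riccati equation with $\dot\Sigma_t(s,u)=\mathbf 1_{t\le s\wedge u}\,\eta(U-2C^\T C)\eta^\T$) produces exactly the three terms $-\Theta^\T\Theta+(D-2\eta C^\T\Theta)^\T P_t$, $P_t\eta(U-2C^\T C)\eta^\T P_t$ and $-P_t(D-2\eta C^\T\Theta)$, i.e.\ the stated matrix Riccati ODE, and the identity $\big((\boldsymbol{\Psi}_t\boldsymbol K\eta)^*g_t\big)(t)=\eta^\T P_tY_t$ gives $Z^2$ and the control.

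Your fallback Markovian route (It\^o ansatz $\Gamma_t=\exp(\phi_t+Y_t^\T P_tY_t)$ for the diffusion $dY_t=DY_t\,dt+\eta\,dW_t$) would indeed produce a solution of \eqref{eq:riccati_sto} with the stated Riccati system, but it does not by itself prove the corollary as phrased: the claim is that \emph{the} solution of Lemma~\ref{lemma:existence_riccati_sto_quadratic}, i.e.\ the operator formulas \eqref{eq:Gamma_quadratic}, can be rewritten in this form, and no uniqueness statement for the BSDE is available in the paper to identify the two. The ``coincidence check'' you defer is precisely the content of the paper's proof (showing $\int_t^T(\boldsymbol{\Psi}_t\bold 1_t)(s)ds$ solves the matrix Riccati equation and computing $(\boldsymbol{\Psi}_t\boldsymbol K\eta)^*g_t$ at $t$), so as written your proposal leaves the essential step open. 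Finally, on the point you flagged: the $D$-dependence you invoke to justify $(D\eta)^\T$ is already absorbed into $P_t$ through $\hat{\boldsymbol K}$; the computation forces the factor $\eta^\T P_tY_t$, consistently with $Z^2_t=2\eta^\T P_tY_t$ and $\alpha^*_t=-(\lambda_t+CZ^2_t)(\cdot)$, so the $(D\eta)^\T$ in the displayed control is a misprint rather than something to be derived.
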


\begin{proof}
    For $K(t,s) = I_N 1_{s \leq t}$,
    $$ Y_s=Y_t + \int_t^s DY_u du + \int_t^s \eta dW_u, \quad s\geq t,$$
    so that
    the adjusted forward process reads  
    $$g_t(s) = \E\Big[ Y_s -\int_t^s DY_u du \mid \mathcal F_t \Big] =\mathbf{1}_{t \leq s} Y_t,$$ 
    and the solution to the Riccati BSDE can be re-written in the form
    \bes{
        \Gamma_t =& \exp\big( \phi_{t}  + \langle g_t, \boldsymbol{\Psi}_{t} g_t \rangle_{L^2} \big) = \exp\big( \phi_{t}  + Y_t^\T P_t Y_t \big),
    }
    where $P_t = \int_t^T (\boldsymbol{\Psi}_{t} \bold{1}_t )(s)ds$ with the $\R^N$-valued indicator function $\bold{1}_t:(s)\mapsto(1_{t\leq s},\ldots,1_{t\leq s})^\top$. We now derive the equations satisfied by $P$ and $\phi$.  First we have $K_T =0$ and 
    \bes{
        \dot{P}_t =& \;  -(\boldsymbol{\Psi}_{t} 1_t)(t) + \int_t^T \frac{d (\boldsymbol{\Psi}_{t} 1_t)(s)}{dt} ds \\
        =& \; -(\boldsymbol{\Psi}_{t} 1_t)(t) + \int_t^T (\dot{\boldsymbol{\Psi}}_t 1_t)(s) ds - \int_t^T \psi_t(s,t)ds\\
        =& \; \boldsymbol{1} + \boldsymbol{2} + \boldsymbol{3}.
    }
    Using Lemma \ref{L:Psi}--\ref{eq:Psi_on_boudary} and the expression $\hat{K}(s,u) = 1_{u \leq s}(D-2\eta C^\T \Theta)$ we get
    \bes{
    \boldsymbol{1} &= (-\Theta^\top \Theta \id + \boldsymbol{\hat{K}}^*\boldsymbol{\Psi}_t )(1)(t)= -\Theta^\top \Theta + (D-2\eta C^\T \Theta)^\T P_t.
    }
    Furthermore, Lemma \ref{L:Psi}--\ref{L:Psi3} and $\dot{\Sigma}_t(s,u) = 1_{t \leq s \wedge u} \eta (U - 2 C^\T C)\eta^\T$  yield 
    \bes{
         \boldsymbol{2} &= \int_t^T (\dot{\boldsymbol{\Psi}}_t 1_t)(s) ds    \; = \;  \int_t^T (\boldsymbol{\Psi}_t \dot{\boldsymbol{\Sigma}}_t \boldsymbol{\Psi}_t 1_t)(s)ds \\
         &= \left(\int_t^T (\boldsymbol{\Psi}_t 1_t)(s)ds \right) \eta (U - 2 C^\T C)\eta^\T \left(\int_t^T (\boldsymbol{\Psi}_t 1_t)(s)ds \right)  \\
         &= P_t (\eta (U - 2 C^\T C)\eta^\T) P_t.
    }
 Moreover, by using Lemma \ref{L:Psi}--\ref{L:Psi1}-\ref{L:Psi2},  we obtain
    \bes{
         \boldsymbol{3} &= - \int_t^T \psi_t(s,t)ds = - (\boldsymbol{\Psi}_t + \Theta^\T \Theta id)^*(1_t) =- (\hat{\boldsymbol{K}}^* \boldsymbol{\Psi}_t)^*(1) (t) =- P_t (D-2\eta C^\T \Theta).
 }This proves the equation for $P$, and  that of $\phi$  is immediate. 
 Finally to prove the formula of $Z^2$ in \eqref{eq:classic_Z2} and $\c^*$ in \eqref{eq:classic_alpha} it suffices to observe the following identity 
 \bes{
      \big( ( \boldsymbol{\Psi}_t \boldsymbol{K} \eta)^* g_t \big) (t) = \eta^\T  P_t Y_t.
 }
\end{proof}

\section{Numerical experiment: rough Stein-Stein for two assets} \label{secnum} 

We illustrate the results of Section~\ref{S:quadratic} on a special case of the two dimensional rough Stein-Stein model as described in Example~\ref{E:quadratic}.  We consider a four dimensional Brownian motion 
$(B^1,B^2,B^{1,\perp},B^{2,\perp})$,  and define 
\begin{align*}
\tilde B^1 = B^1, \quad \tilde B^2 = \rho B^1 + \sqrt{1-\rho^2} B^2, \quad 
W^i=c_i \tilde B^i + \sqrt{1-c_i^2} \tilde B^{\perp,i},
\end{align*}
for some $\rho$ $\in$ $[-1,1]$, and $c_i \in [-1,1]$, $i=1,2$.

For simplicity we set $r\equiv 0$, and consider two stocks of price process  $S^1$ and $S^2$ with the following  dynamics\footnote{This corresponds to Example \ref{E:quadratic}-(i) with $(\beta_{11},\beta_{12},\beta_{21},\beta_{22}) = (1,0, \rho, \sqrt{1-\rho^2})$ and  $\Theta = \beta^{-1} \diag{(\theta_1,\theta_2)}$.}
\begin{equation}
\begin{cases}
dS^i_t &= \; S^i_t \theta_i (Y^i_t)^2     dt + S^i_t Y^i_t d\tilde B^i_t, \\
\; Y^i_t &=  \;  Y^i_0 +  \frac{1}{\Gamma(H_i+1/2)}  \int_0^t {(t-s)^{H_i-1/2}} \eta_i dW_s^i,
\quad i=1,2,
\end{cases}
\end{equation}
with $H^i>0$, $\eta_i,\theta_i \geq 0$ and $Y^i_0\in \R$. 

Although the framework of Section~\ref{S:quadratic} allows for a more general correlation structure for the Brownian motion, the model is already rich enough to  capture the following stylized facts:
\begin{itemize}
\item the two stocks $S^i$, $i$ $=$ $1,2$,  are correlated through $\rho$, 
\item  each stock $S^i$ has a stochastic rough volatility $|Y^i|$ with possibly different  Hurst indices $H_i$,
\item each stock $S^i$  is correlated with its own volatility process through $c_i$ to take into account the leverage effect.
\end{itemize}

Our main motivation for considering the multivariate rough Stein-Stein model is to study   the `buy rough sell smooth' strategy of \citet{glasserman2020buy} that was backtested empirically:  this strategy consisting in buying the roughest assets while shorting on the smoothest ones was shown to be profitable. We point out that the numerical simulations for the one dimensional rough Heston model carried  in  \citet{han2019mean}  by varying the Hurst index $H$ could not provide much insight on such strategy, apart from suggesting that the vol-of-vol has a possible impact on the `buy rough sell smooth strategy'. Our quadratic multivariate framework allows for  more flexible simulations, with a richer correlation structure compared to multivariate extensions of the rough Heston model, recall Remark~\ref{R:correlStein}. Our results below provide new insights on the strategy by showing that the correlation between stocks plays a key role.

  Our present goal is to illustrate the influence of some parameters, namely the horizon $T$, the vol-of-vol $\eta$ and the correlation  $\rho$ between the stocks, onto the optimal investment strategy when two assets, one rough and one smooth with $H_1 < H_2$, are at stakes. To ease comparison, we set $c_1=c_2=-0.7$ for the leverage effects,  $Y^1_0=Y^2_0$ and we normalize the vol-of-vols by setting $\eta_1=\eta_2$. We consider  the evolution of  optimal   vector of amount invested  into each stock, i.e.,  $t \mapsto \pi_t^*$  (recall that $ \alpha_t^*=\sigma_t^\top \pi_t^*$ {with $\sigma=\diag(Y^1,Y^2) \beta$} and $\alpha^*$ is given by \eqref{eq:optimal_control_2}). 
 $\pi$ being a stochastic process,  we also consider the deterministic function $t  \mapsto ( ({ \Theta } + 2  C  \left[\boldsymbol{\Psi}_{t} \boldsymbol{K}\eta \right]^*  ) Y_0 )(t) (\xi^*)$, {where $\xi^*$ is defined in \eqref{eq:eta_star}}, to help us in our analysis.\\ 
 
For our implementation of $\alpha^*$ given by \eqref{eq:optimal_control_2}, we discretize in time the operators acting on $L^2$, so that the kernel of the operator  $\Psi$ in \eqref{def:riccati_operator} is  approximated by a  finite dimensional  matrix (see for instance \citet[Section 2.3]{jaber2019laplace} for a similar procedure) and the Gaussian process $(g_t(s))_{t\leq s\leq T}$ defined in \eqref{eq:processg_quadratic} is simulated by Cholesky's decomposition algorithm. We refer to  the following \href{https://colab.research.google.com/drive/1P_SYE3WgFgwUKpOo8uCBDdIC04XyxE2a?usp=sharing}{url} for the full code and  additional simulations. \\

 Our observations from the simulations are the following.\\

 \textbf{1. Horizon $T$:} With the goal of understanding the effect of the horizon $T$ on the investment strategy, we fix all parameters but $T$ with $\rho=0$. The results are illustrated on Figures \ref{fig:horizon_T_1}-\ref{fig:horizon_T_2}-\ref{fig:horizon_T_3} and \ref{efficient_frontier_small_T}-\ref{efficient_frontier_medium_T}-\ref{efficient_frontier_big_T}. We can distinguish 3 regimes:
    \begin{itemize}
        \item $T \ll 1$ : When the investment horizon is close  to the end, the rough asset is overweighted over the smooth one.
        \item $T \approx 1$ : A transition appears, as the smooth asset is first overweighted  and then the rough asset becomes overweighted as we approach the final horizon.
        \item $T \gg 1$:  The smooth asset is overweighted all along the experiment, letting its first position only when the maturity is close, suggesting that the transition point becomes closer to $T$ as $T$ grows.
    \end{itemize}
    One possible interpretation of this transition is the following. Rough processes are more volatile than smooth processes in the short term but less volatile in the long term, since their variances evolve approximately as $t^{2H}$. Thus, when there is not much time left, it seems natural to look for rough processes to obtain some performance. {Conversely, the more time we have, the more we favor the smooth asset.} \\

\begin{figure}[h!]
\centering
\subfloat[$T = 0.5$]{
\label{fig:horizon_T_1}
    \includegraphics[width=70mm]{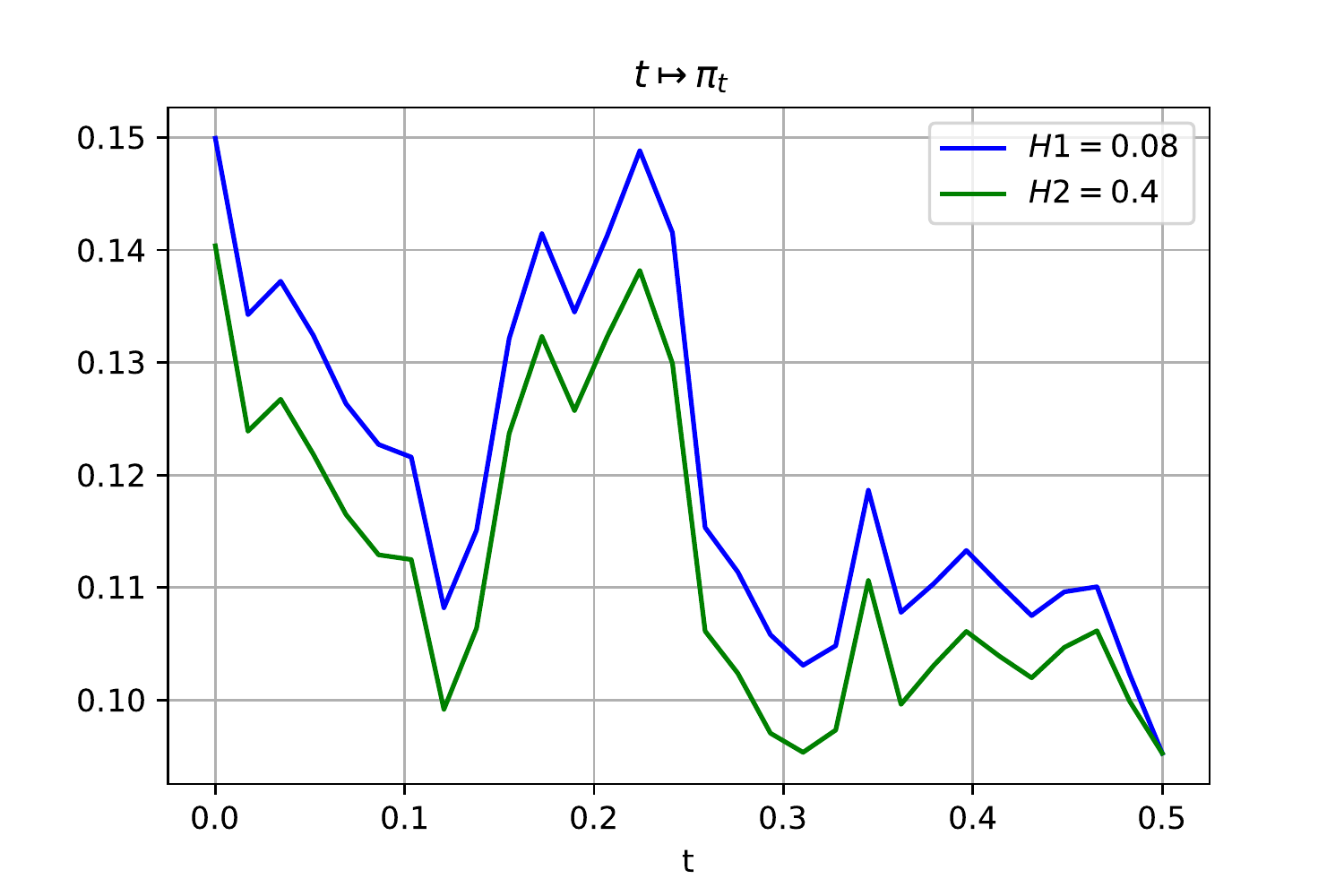}
    \includegraphics[width=70mm]{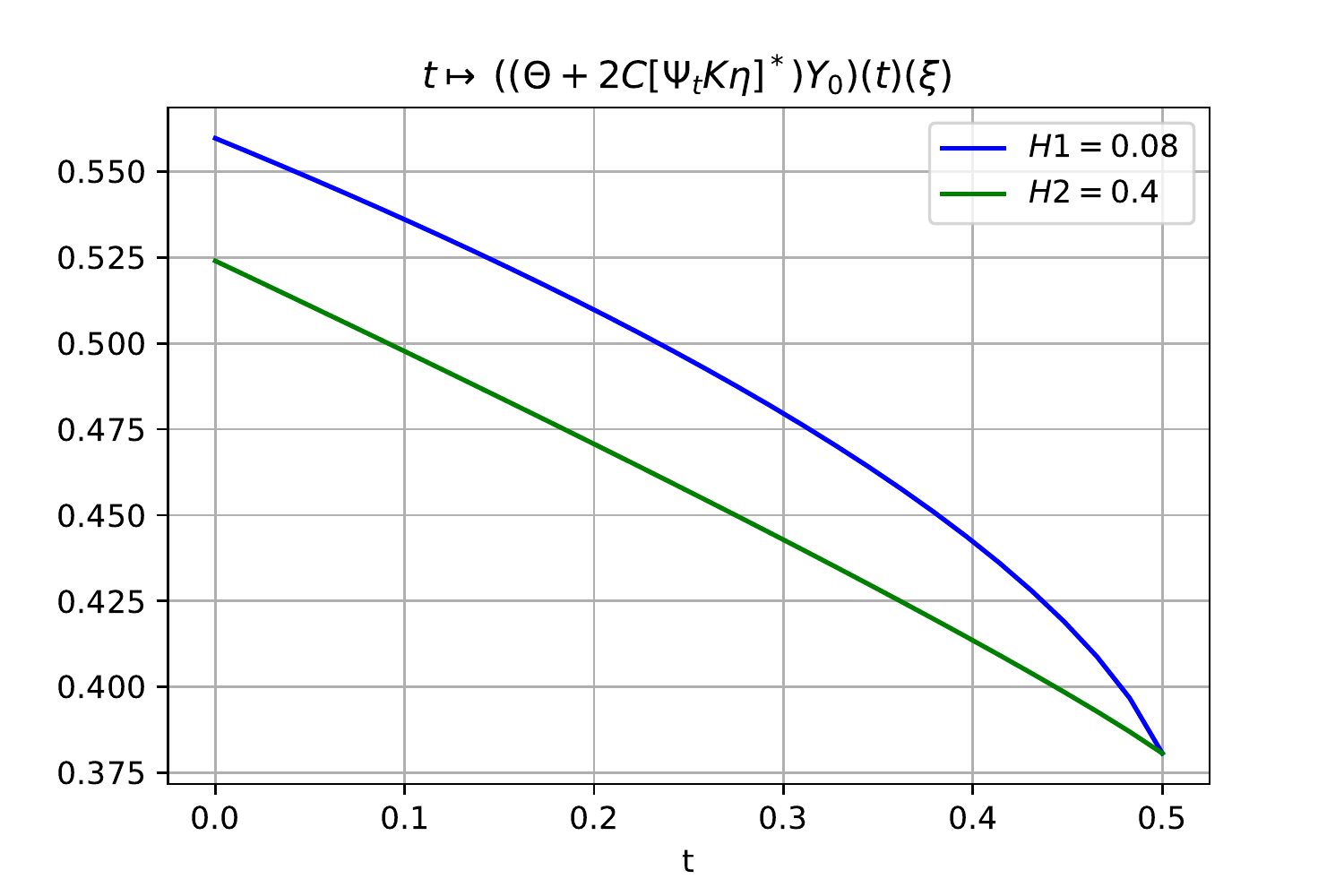}
    }
\vspace{-0.3cm}
\subfloat[$T = 1.5$]{
\label{fig:horizon_T_2}
    \includegraphics[width=70mm]{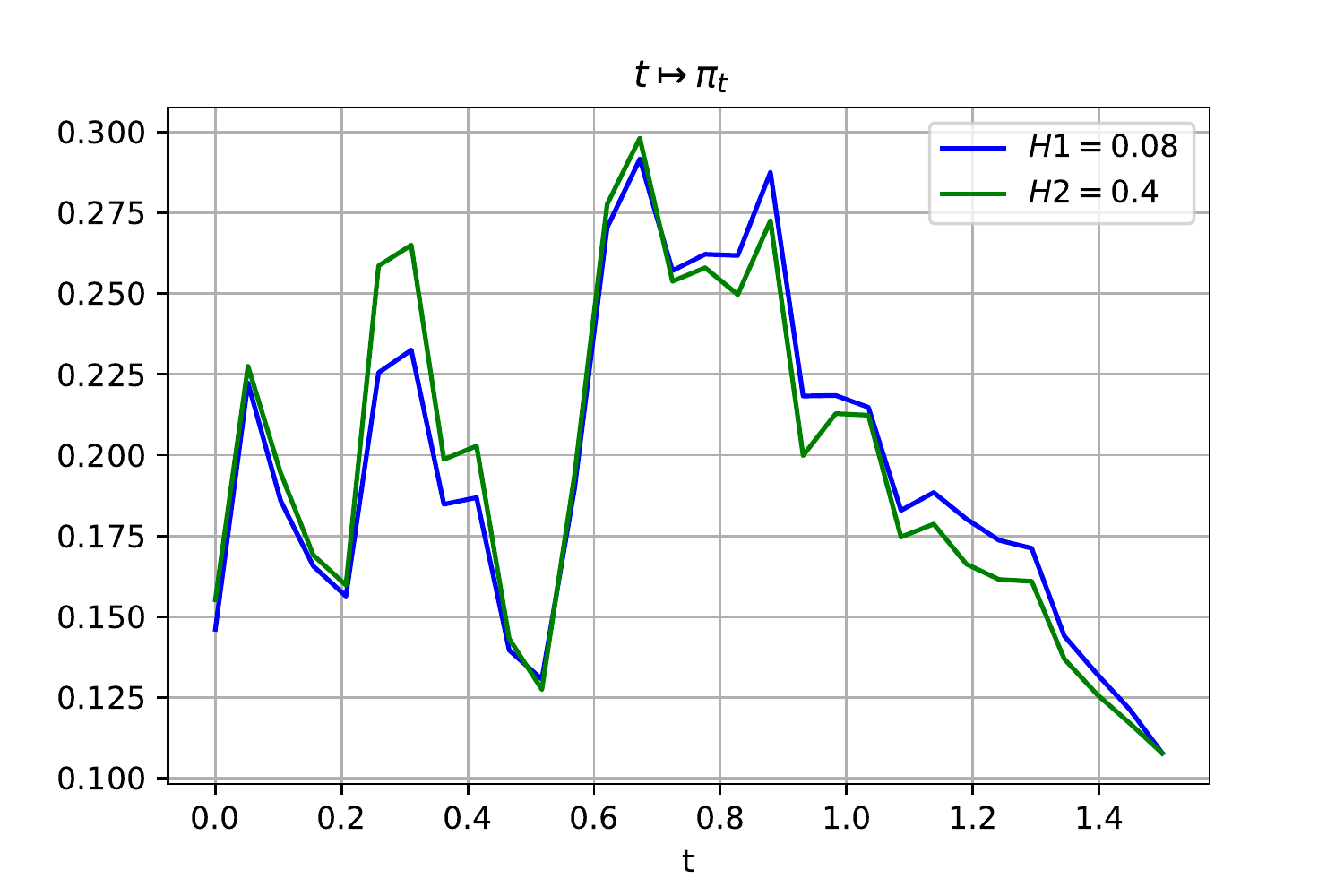}  
    \includegraphics[width=70mm]{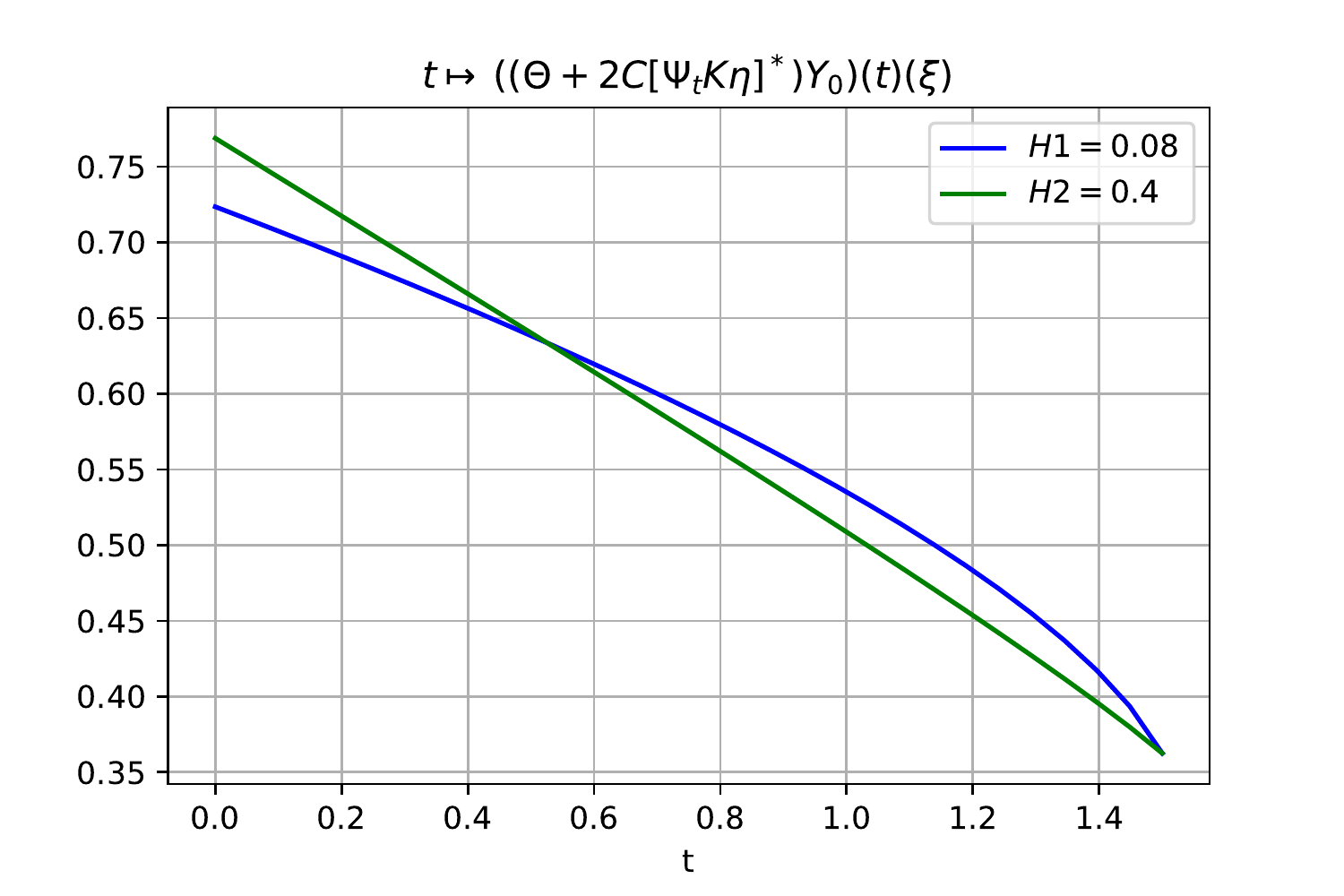}
    }
\vspace{-0.3cm}
\subfloat[$T = 2.4$]{
\label{fig:horizon_T_3}
    \includegraphics[width=70mm]{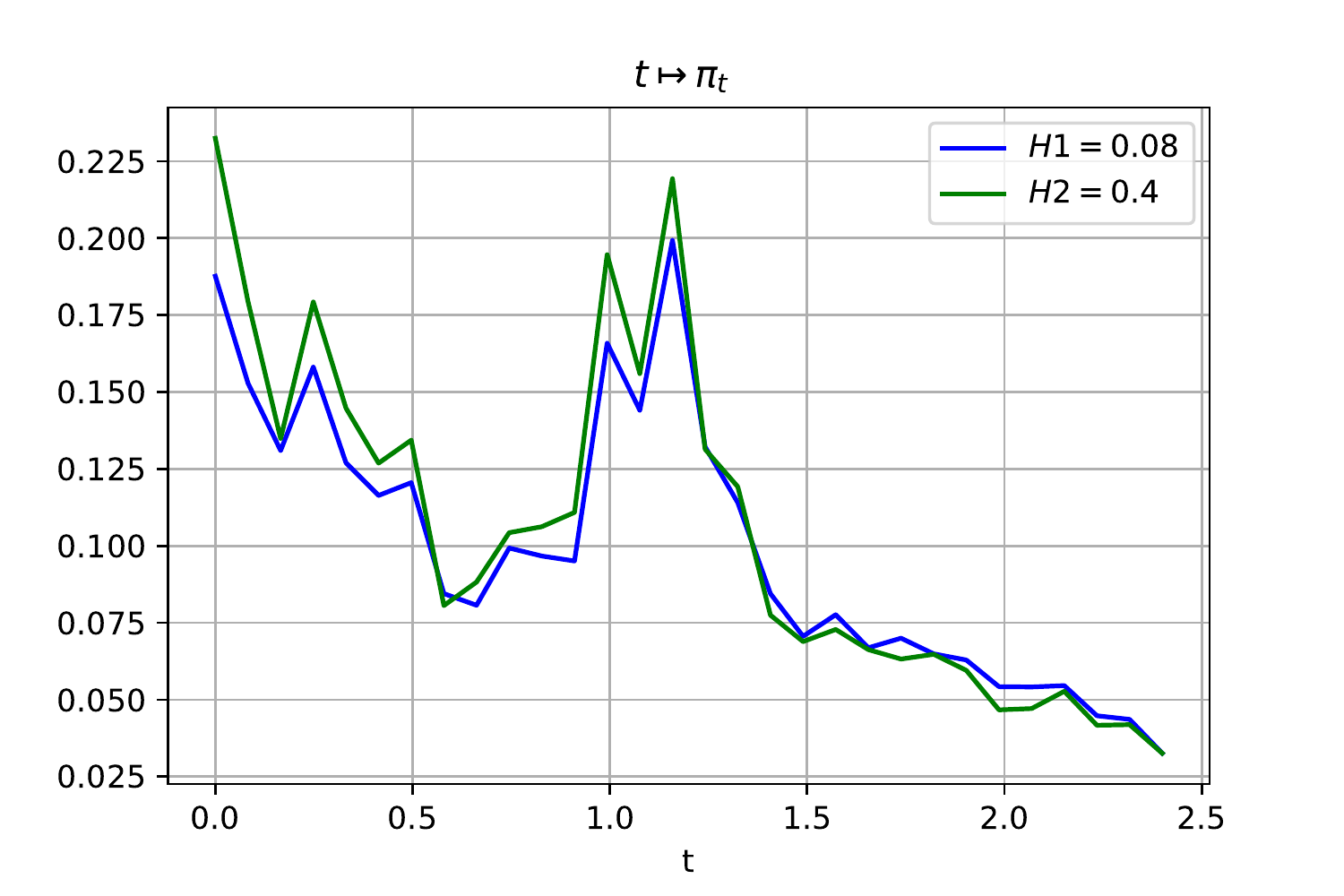}
    \includegraphics[width=70mm]{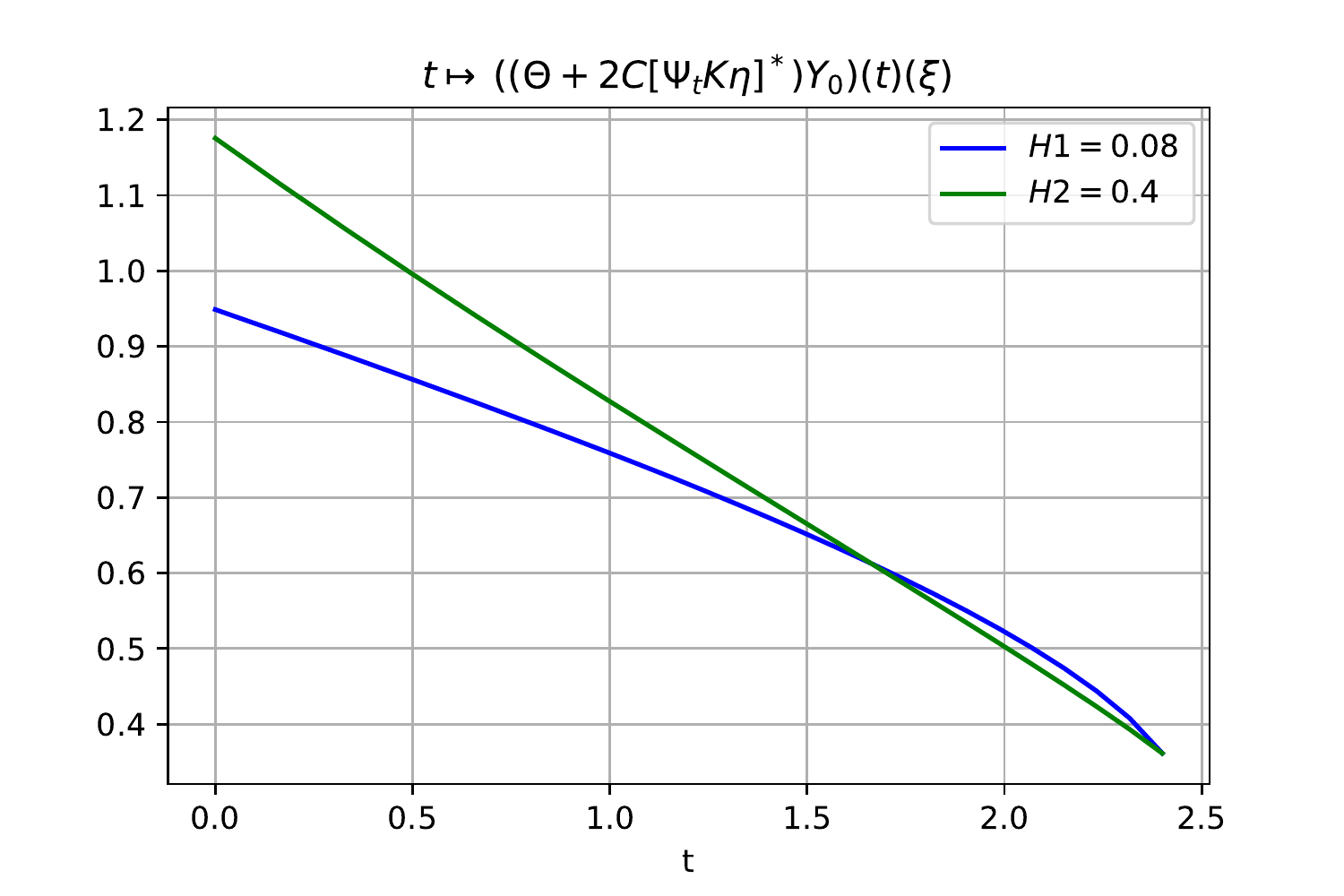}
    }
\caption{Effect of the horizon $T$ on the optimal allocation strategy. When the horizon $T$ approaches, the rough stock in blue is preferred. When $T$ is big enough and the horizon far enough the smooth stock in green is preferred. (The parameters are: $H_1 = 0.08$, $H_2=0.4$, $\rho=0, \eta_1 = \eta_2 =1, c_i = -0.7$.) } 
\end{figure}

    \begin{figure}[h!]
\centering
\subfloat[$T = 0.5$]{
	\label{efficient_frontier_small_T}
    \includegraphics[width=50mm]{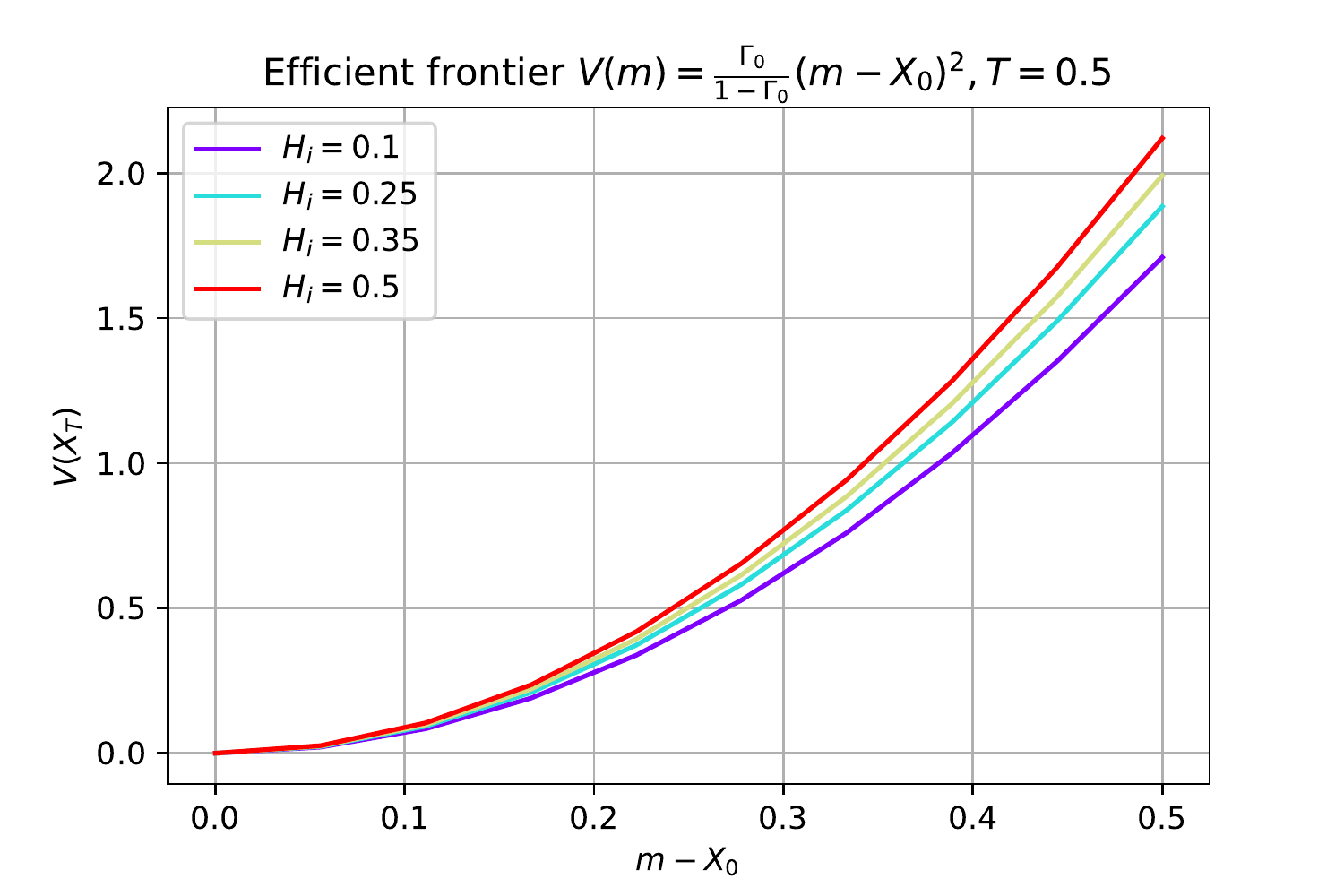}}
\subfloat[$T = 1.9$]{
	\label{efficient_frontier_medium_T}
    \includegraphics[width=50mm]{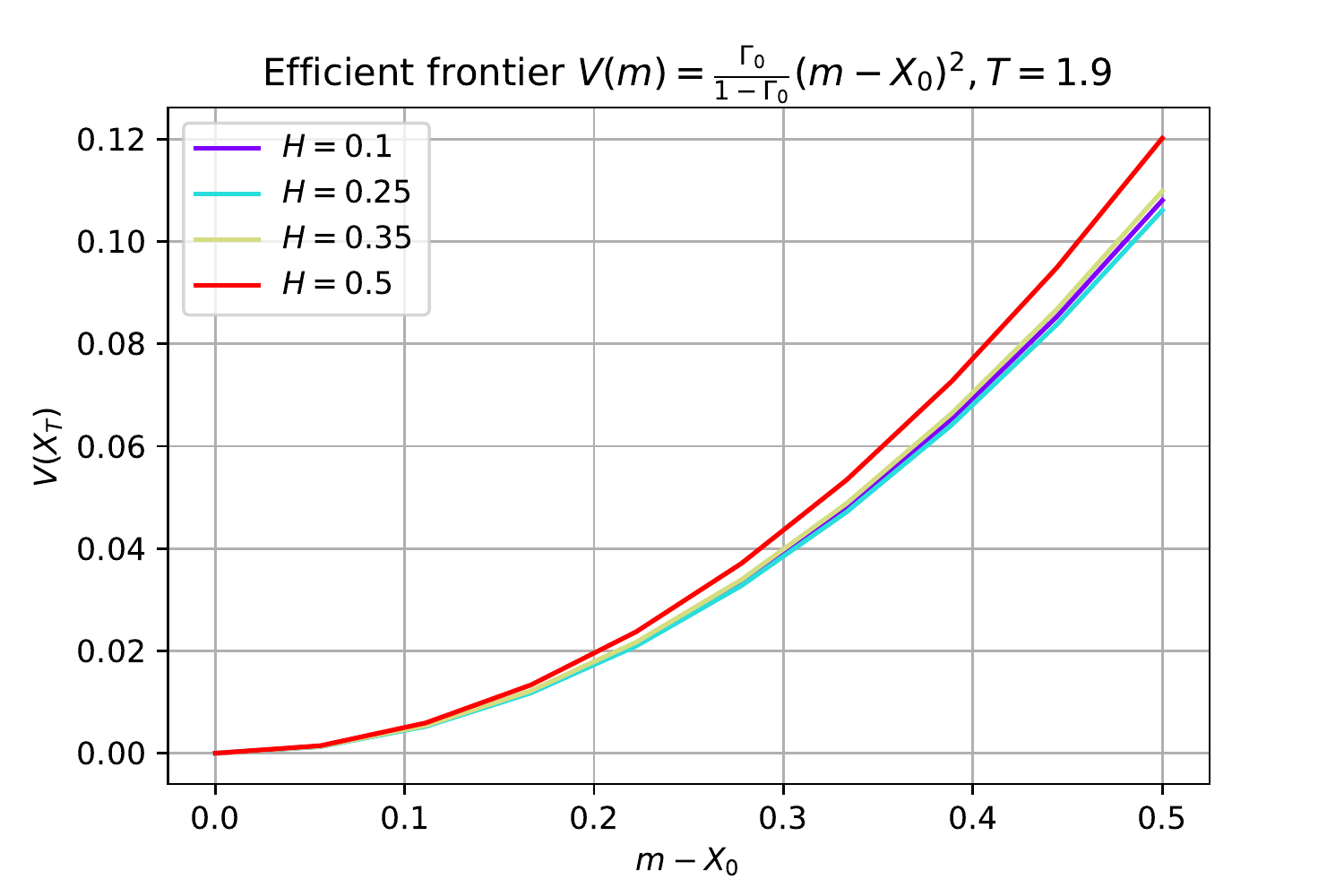}
  }
\subfloat[$T = 2.8$]{
	\label{efficient_frontier_big_T}
    \includegraphics[width=50mm]{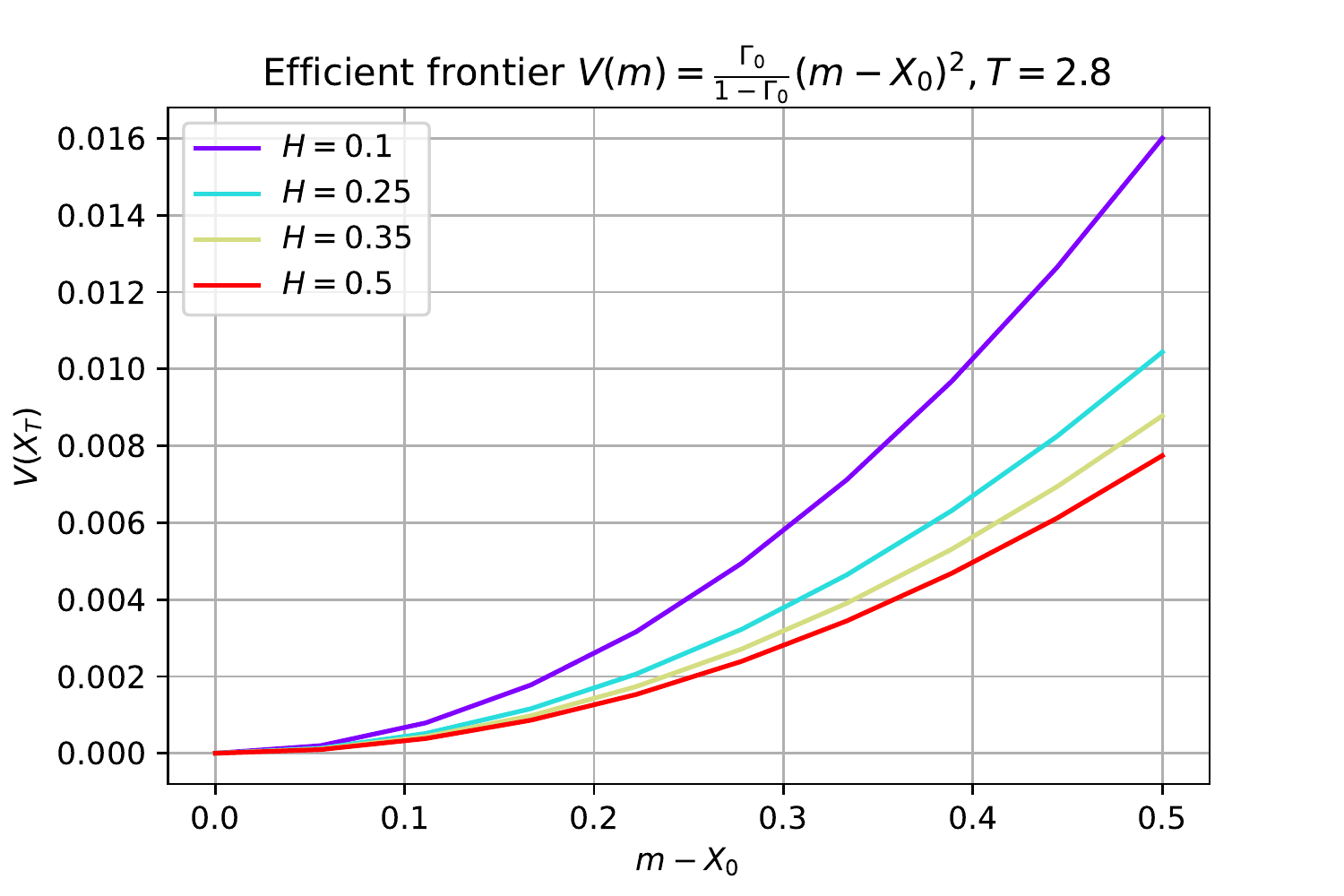}}
 
\caption{The efficient frontier in the case where both assets have the same roughness $H_1 = H_2 = H$. When the horizon $T$ is small, the rough stocks allows for lower variance. When $T$ increases we observe a transition and an inversion of the relation order. Indeed, when $T$ increases, it is the smoothest stocks that allow for a lower variance.}
\end{figure}

  \textbf{2. Vol-of-vol $\eta$:} The volatility of volatility seems to have the opposite effect of the horizon $T$ over the investment strategy as shown on Figures \ref{small_eta}-\ref{big_eta}.
    \begin{itemize}
        \item $\eta \ll 1 $ : The smooth asset and then the rough asset are successively overweighted.
        \item $\eta \gg 1 $ : The rough asset is overweighted.
    \end{itemize}
It is  quite natural to expect the vol-of-vol to have an inverse effect when compared to the horizon $T$, since  increasing the vol-of-vol is similar to accelerating the time scale at a certain rate depending on $H$ 
(think of the self-similarity property of fractional Brownian motion).   \\

\begin{figure}[h!]
\centering
\subfloat[$\eta = 0.01$]{
	\label{small_eta}
    \includegraphics[width=75mm]{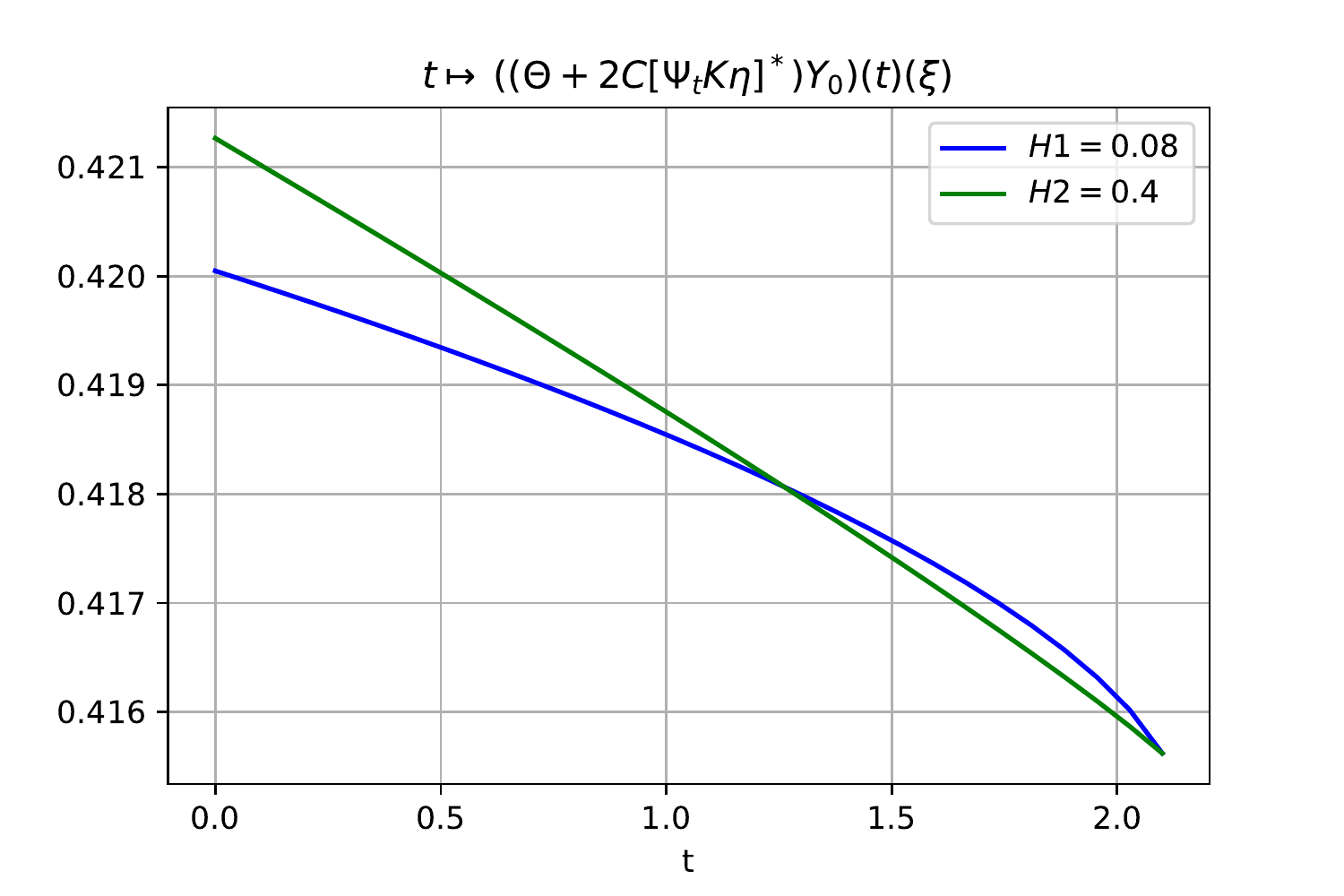}}
\subfloat[$\eta = 1.8$]{
	\label{big_eta}
    \includegraphics[width=75mm]{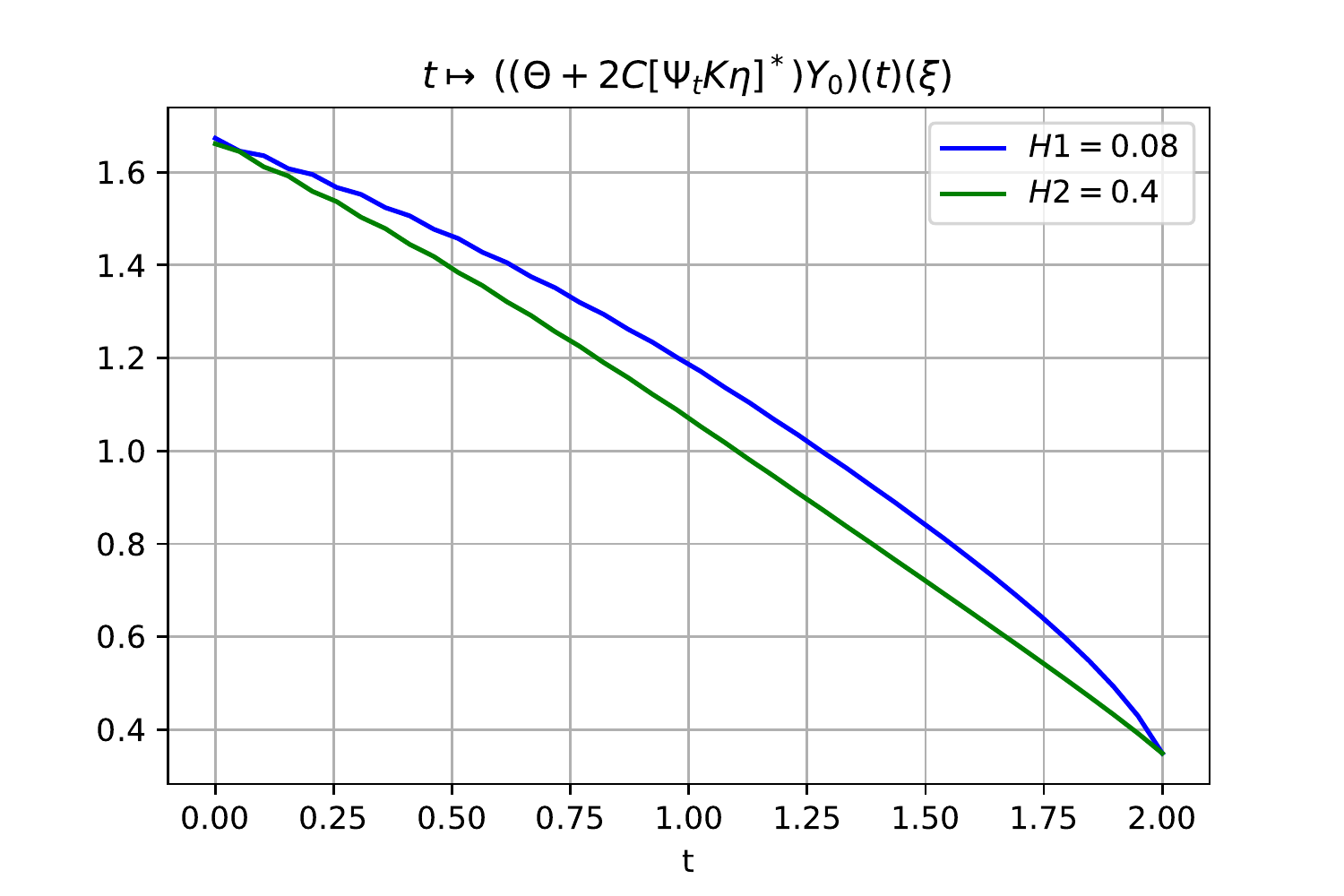}
  }
 
\caption{As the vol-of-vol $\eta$ increases, it is as if the horizon $T$ was decreasing and the rough stock in blue begins to be preferred. $H_1 = 0.08$, $H_2=0.4$, $T=2.1, \rho=0, c_i = -0.7$.}
\end{figure}

 \textbf{3. Correlation $\rho$: } 
    \begin{itemize}
        \item $\rho <0$ : In the case of negatively correlated assets it is natural to expect the following strategy : pick both assets in order to be protected from volatility and benefit from the drift. So we expect the case $\rho<0$ to be similar from $\rho=0$ except that the transition from $T \ll 1$ to $T \gg 1$ should appear at a greater $T$. This is what we observe on Figures \ref{fig:horizon_T_1_rho_neg}-\ref{fig:horizon_T_2_rho_neg}-\ref{fig:horizon_T_3_rho_neg}. We interpret this evolution towards the equally weighted portfolio as the possibility to be protected from volatility by holding both assets.
        \item $\rho >0$ :  when the two stocks are positively correlated with $\rho>0$, there is no  minimization of variance through  diversification by going long in both assets. Thus in the case a positively correlated assets, it is natural to expect the emergence of a starker choice between the assets. In the $\rho > 0$ case, see Figures \ref{pi_buy_rough_sell_smooth}-\ref{op_buy_rough_sell_smooth}, we observe a \textit{buy rough sell smooth} strategy as the one empirically found in \citet{glasserman2020buy}. 
    \end{itemize}

\begin{figure}[h!]
\centering
\subfloat{
	\label{pi_buy_rough_sell_smooth}
    \includegraphics[width=70mm]{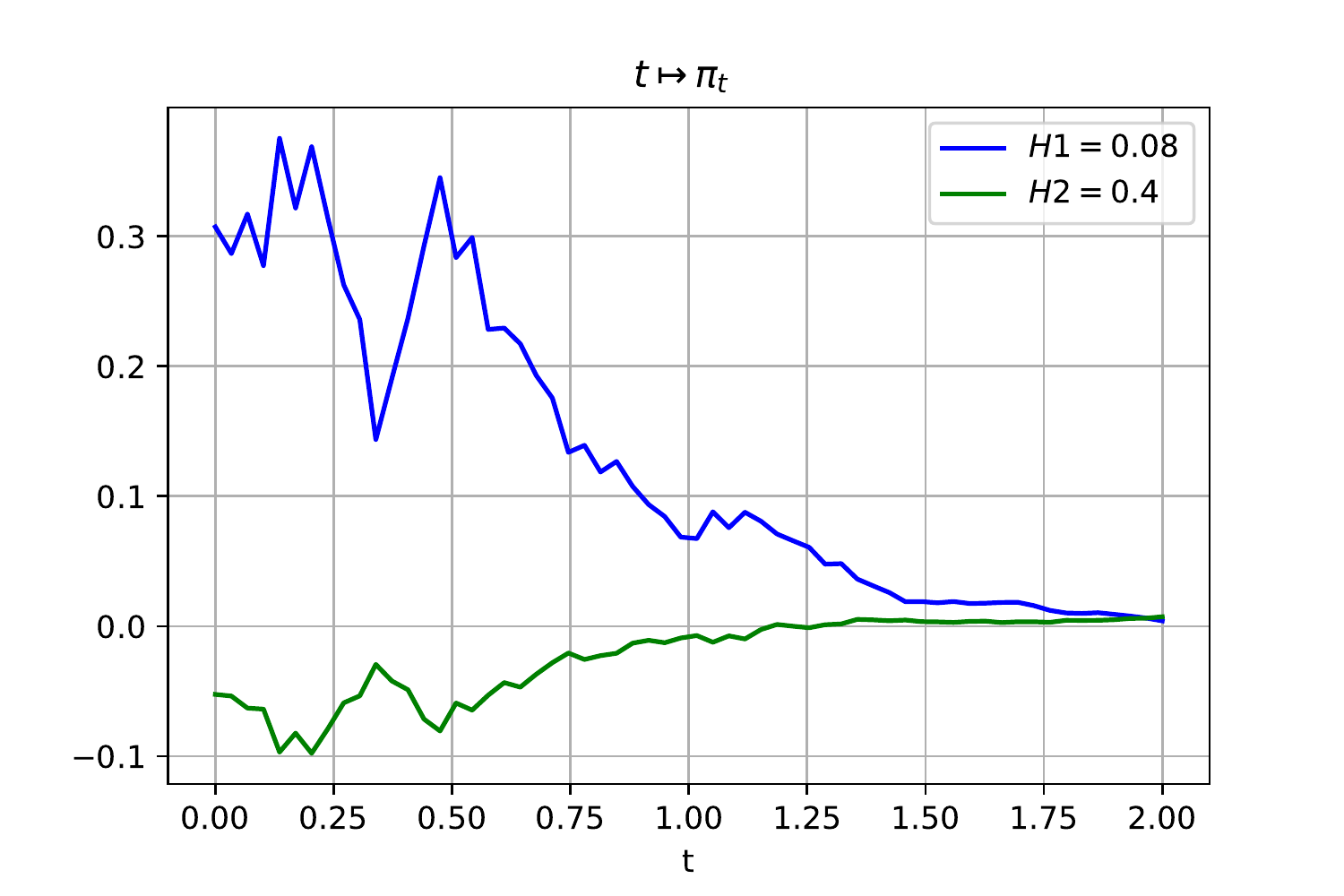}}
\subfloat{
	\label{op_buy_rough_sell_smooth}
    \includegraphics[width=70mm]{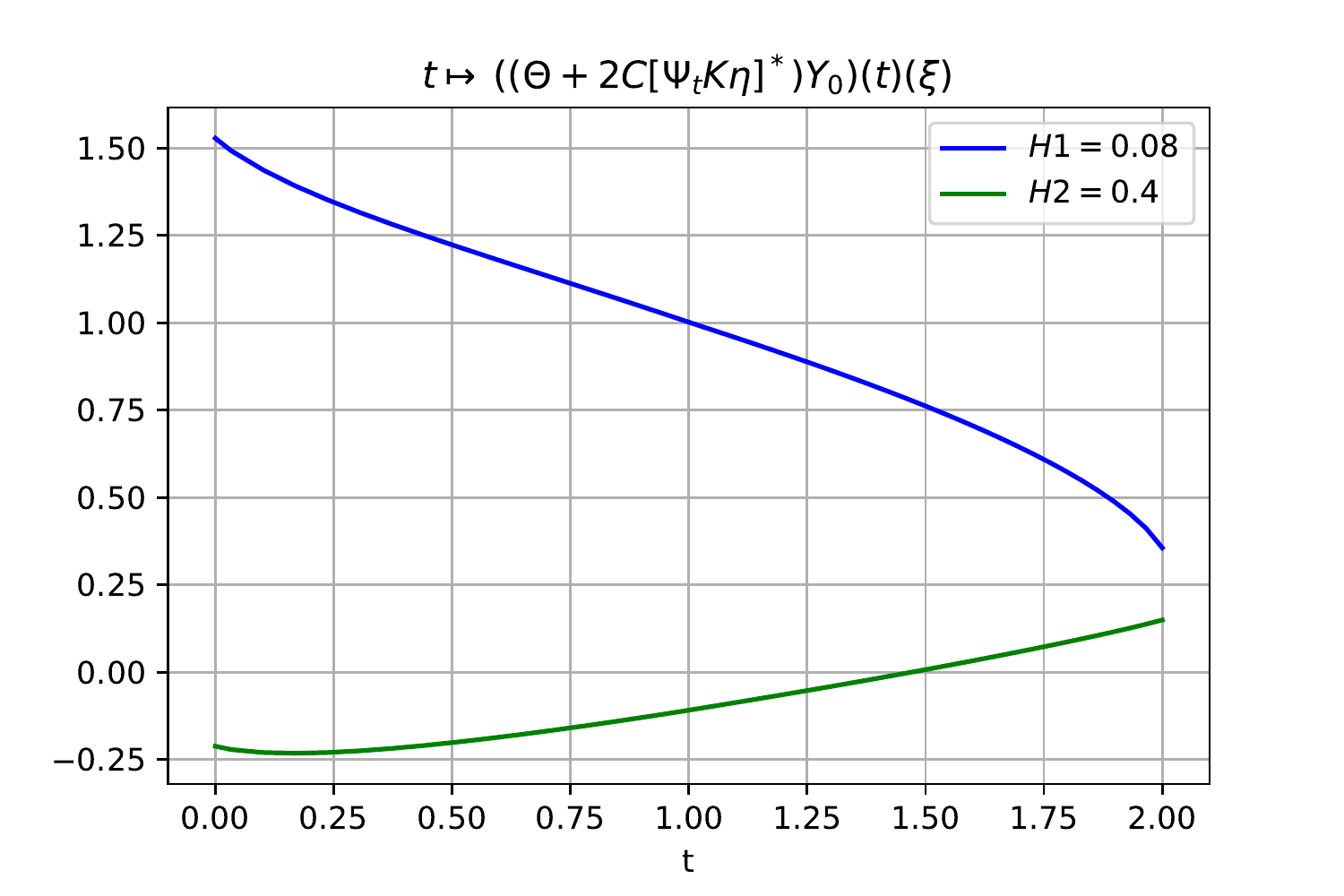}
  }
 
\caption{$\rho = 0.7$, when the two assets are positively correlated we recover \textit{the buy rough sell smooth} strategy as it is described in \cite{glasserman2020buy}. (the parameters are: $H_1 = 0.08$, $H_2=0.4$, $T=2.1$, $\eta_1 = \eta_2 = 1$, $c_i=-0.7$.)}
\end{figure}

\begin{figure}[p]
\centering
\subfloat[$T = 0.5$]{
\label{fig:horizon_T_1_rho_neg}
    \includegraphics[width=70mm]{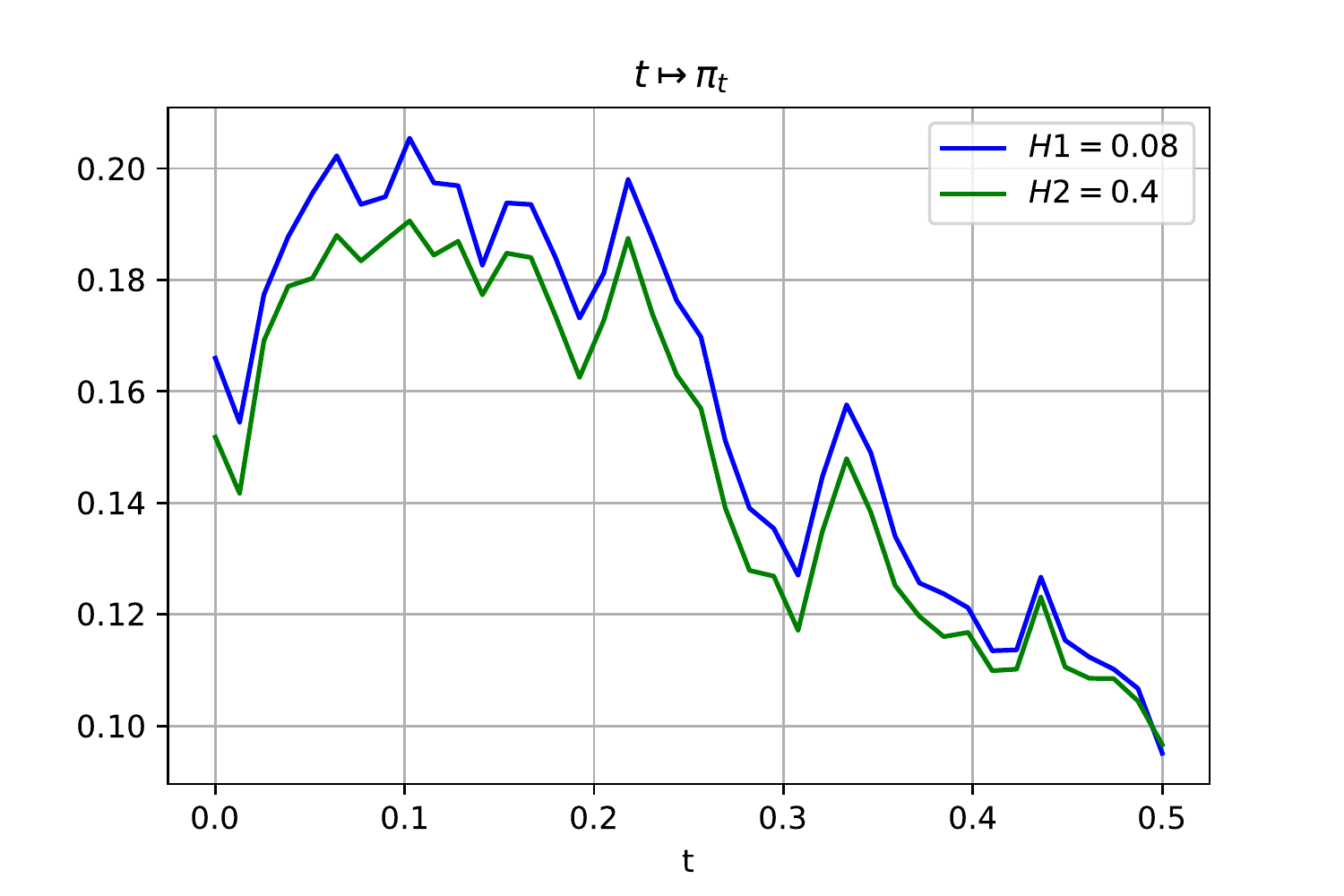}
    \includegraphics[width=70mm]{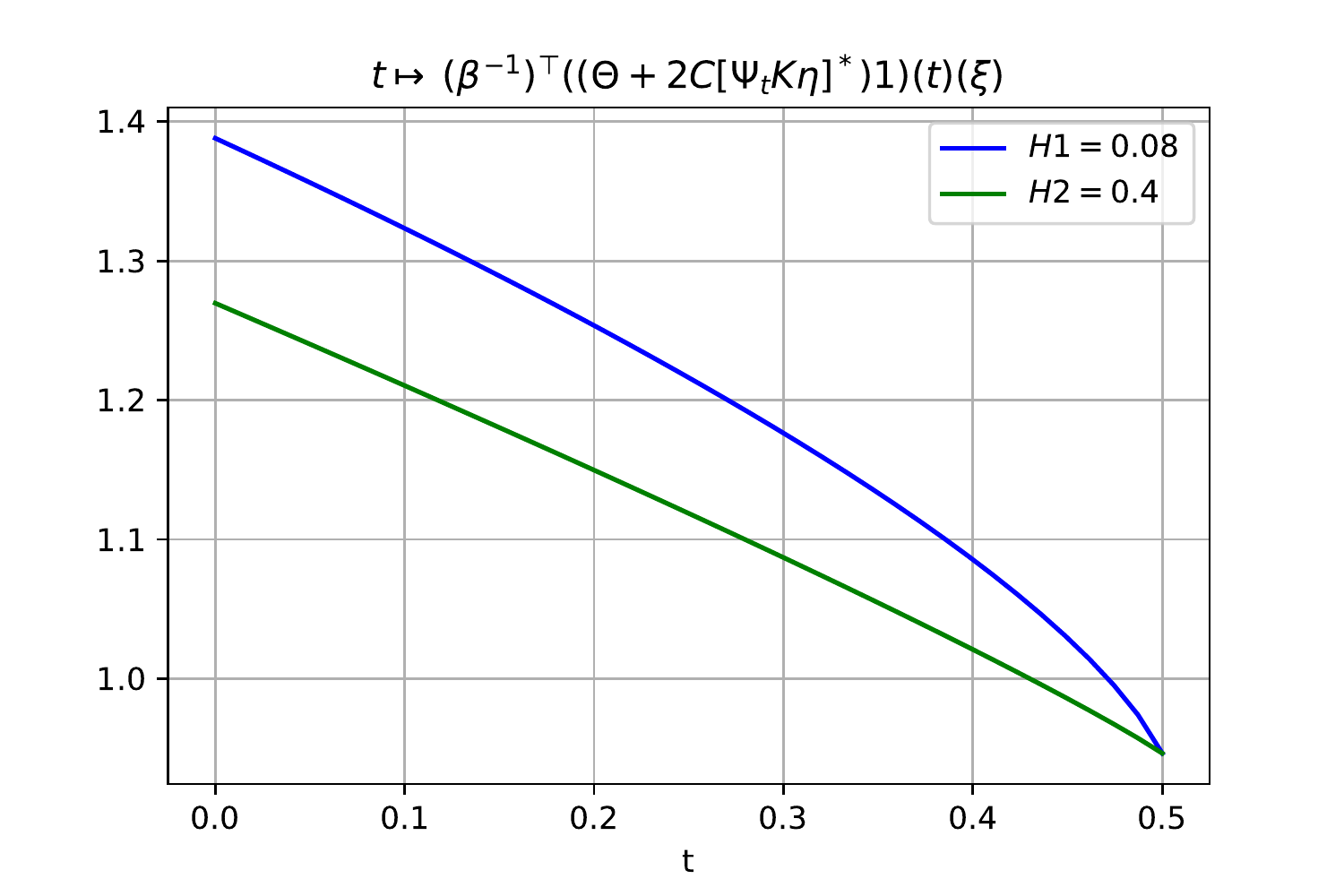}
    }
    
\subfloat[$T = 1.5$]{
\label{fig:horizon_T_2_rho_neg}
    \includegraphics[width=70mm]{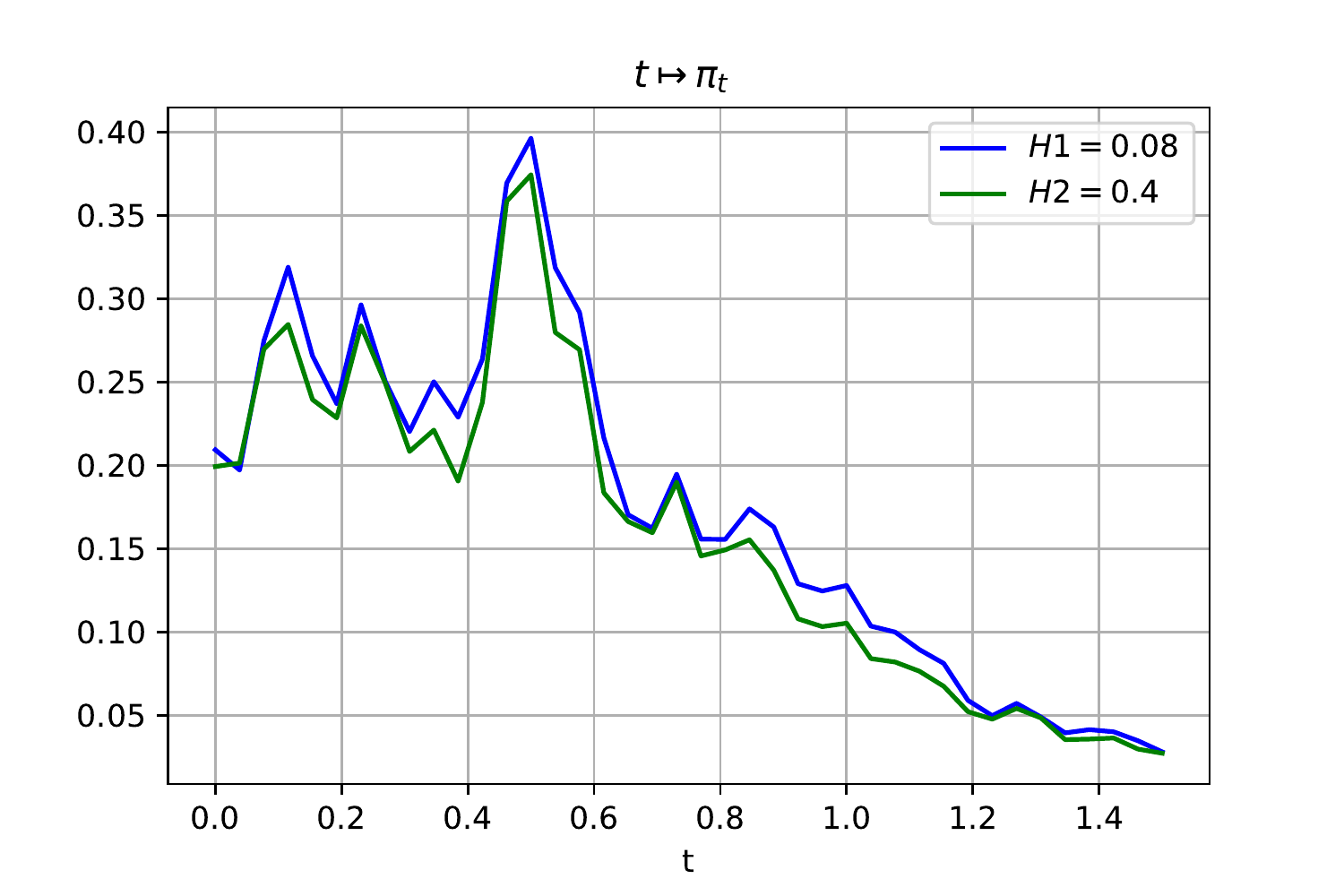}  
    \includegraphics[width=70mm]{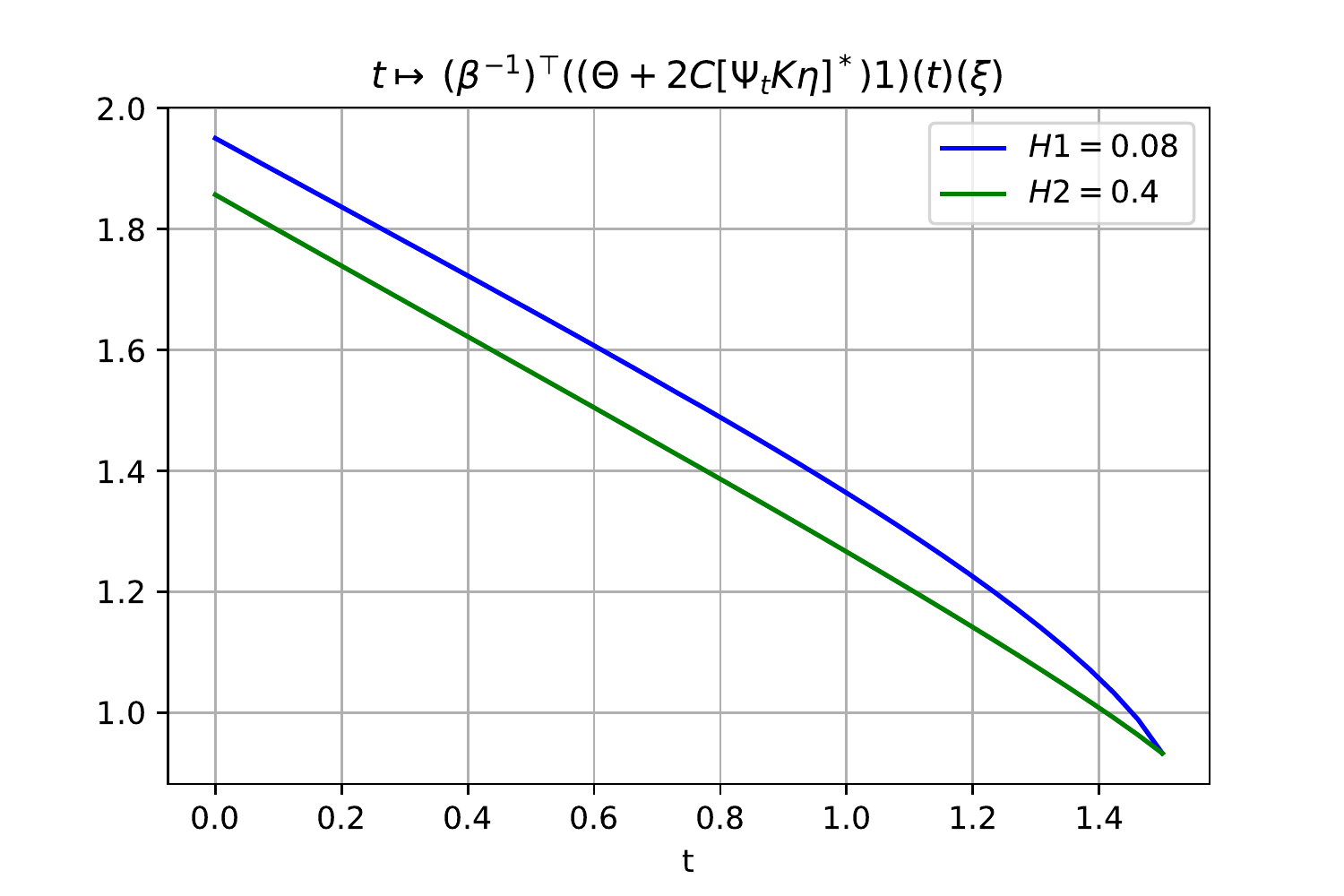}
    }

\subfloat[$T = 2.4$]{
\label{fig:horizon_T_3_rho_neg}
    \includegraphics[width=70mm]{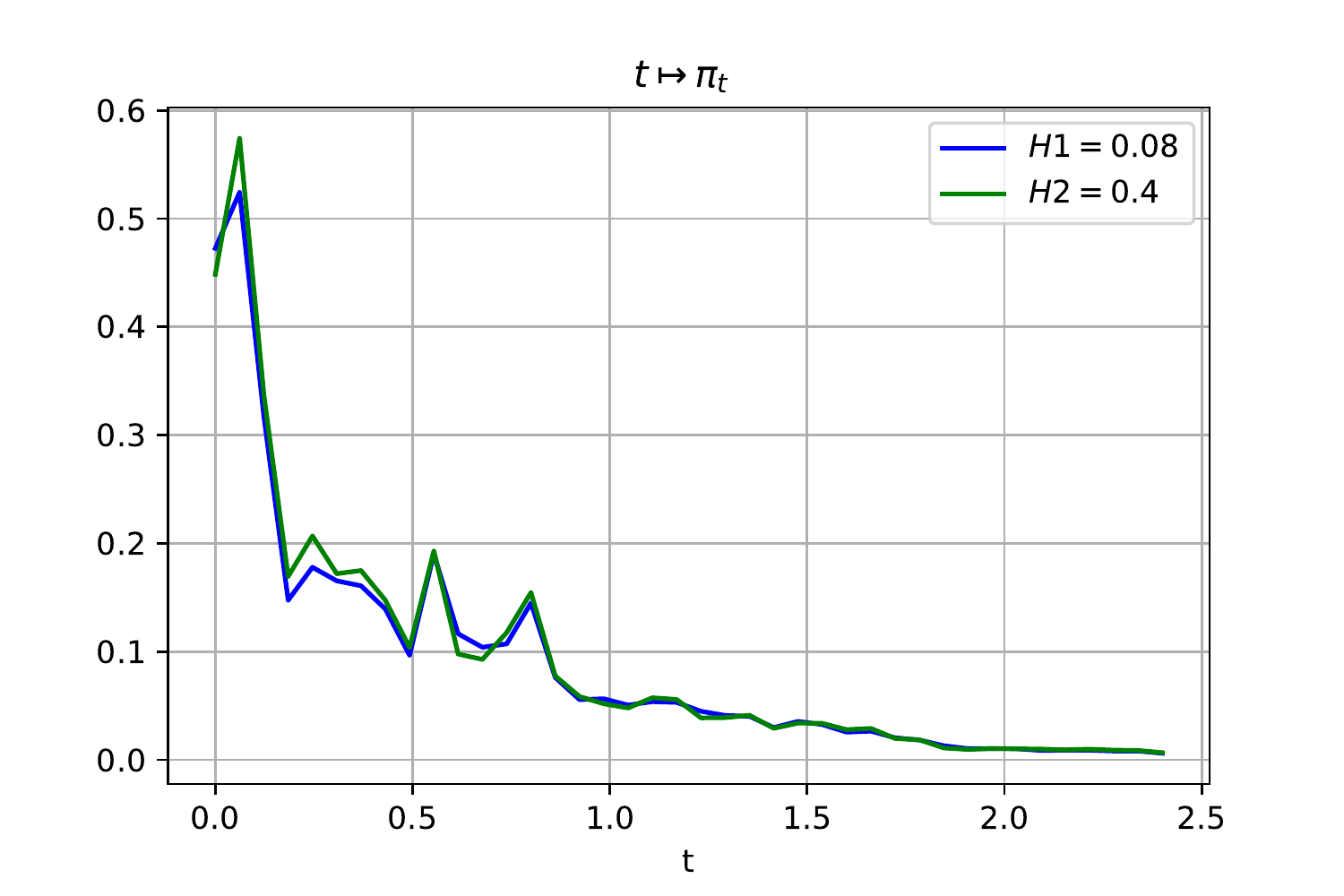}
    \includegraphics[width=70mm]{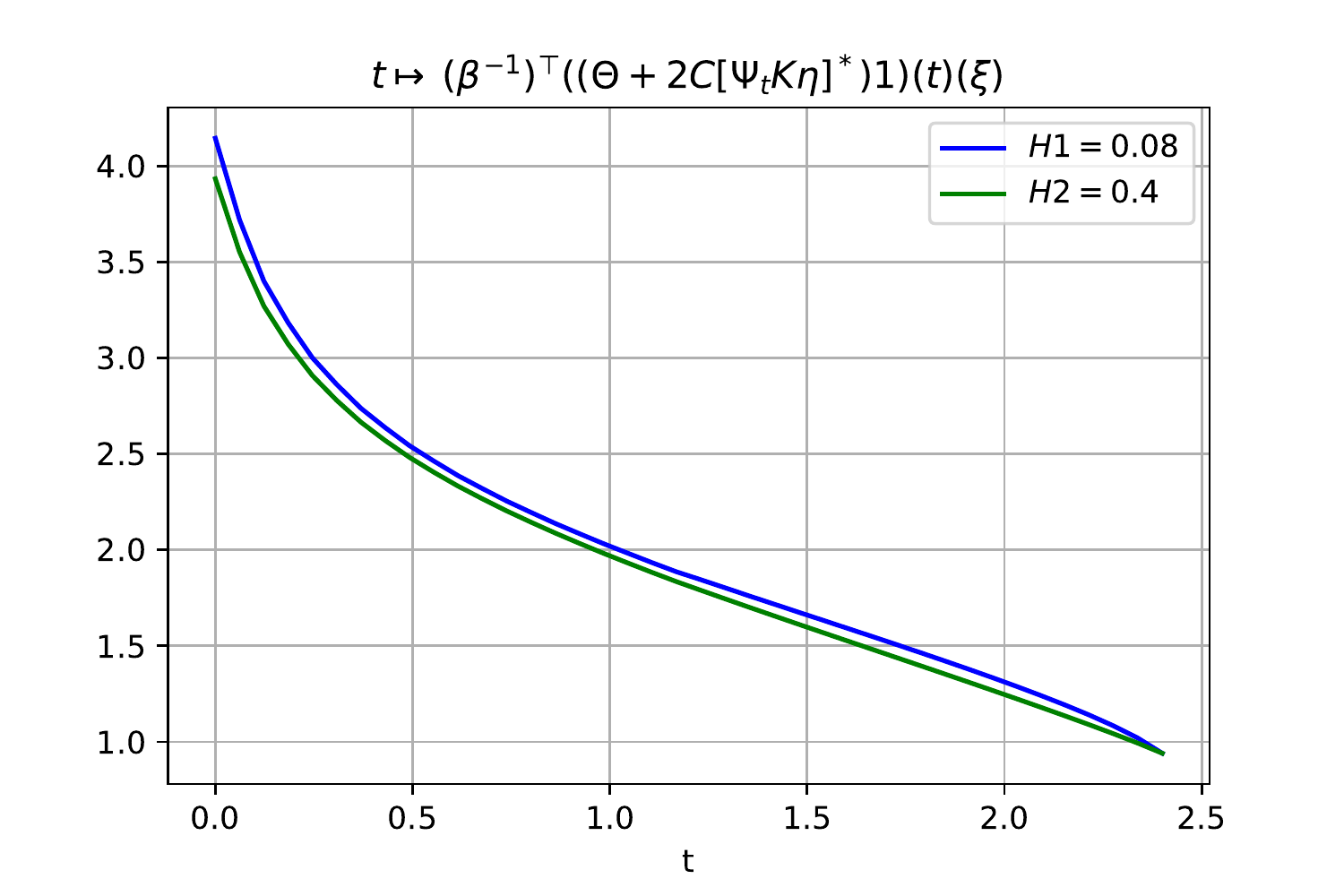}
    }
    
\caption{Effect of the horizon $T$ on the optimal allocation strategy when the two assets are negatively correlated ($\rho = -0.4$),  $H_1 = 0.08$, $H_2=0.4$. As $T$ increases the smooth stock in green is more and more weighted in comparison to the rough one in blue. But the transition takes more time compared to the case $\rho=0$, see Figures \ref{fig:horizon_T_1}-\ref{fig:horizon_T_3}.  $\eta_1= \eta_2 = 1, c_i = -0.7$. Note the beginning of the blow-up when $T$ reaches $T=2.4$, as it could be {foreseen} by the condition of Lemma \ref{L:lemmacondtheta2}.}
\end{figure}

\vspace{1mm}

~~\\
~~\\
~~\\

As a further line of research,  we see two interesting paths :
\begin{itemize}
    \item A theoretical study of influence of the {parameters} onto the investments strategies.
    \item An empirical study testing the different conjectures made about the influence of some parameters such as $T, \eta, \rho, H$, etc. 
\end{itemize}
{Our numerical results extend to larger horizon $T$. For instance, in  Figure \ref{fig:big_T}, we took a maturity of  $T=20$ years, although we noted that a smaller $\eta = 0.1$ had to be chosen to avoid any blow-up, in accordance with Remark \ref{rk:cond_theta}.}

\begin{figure}[h!]
\centering

    \includegraphics[width=70mm]{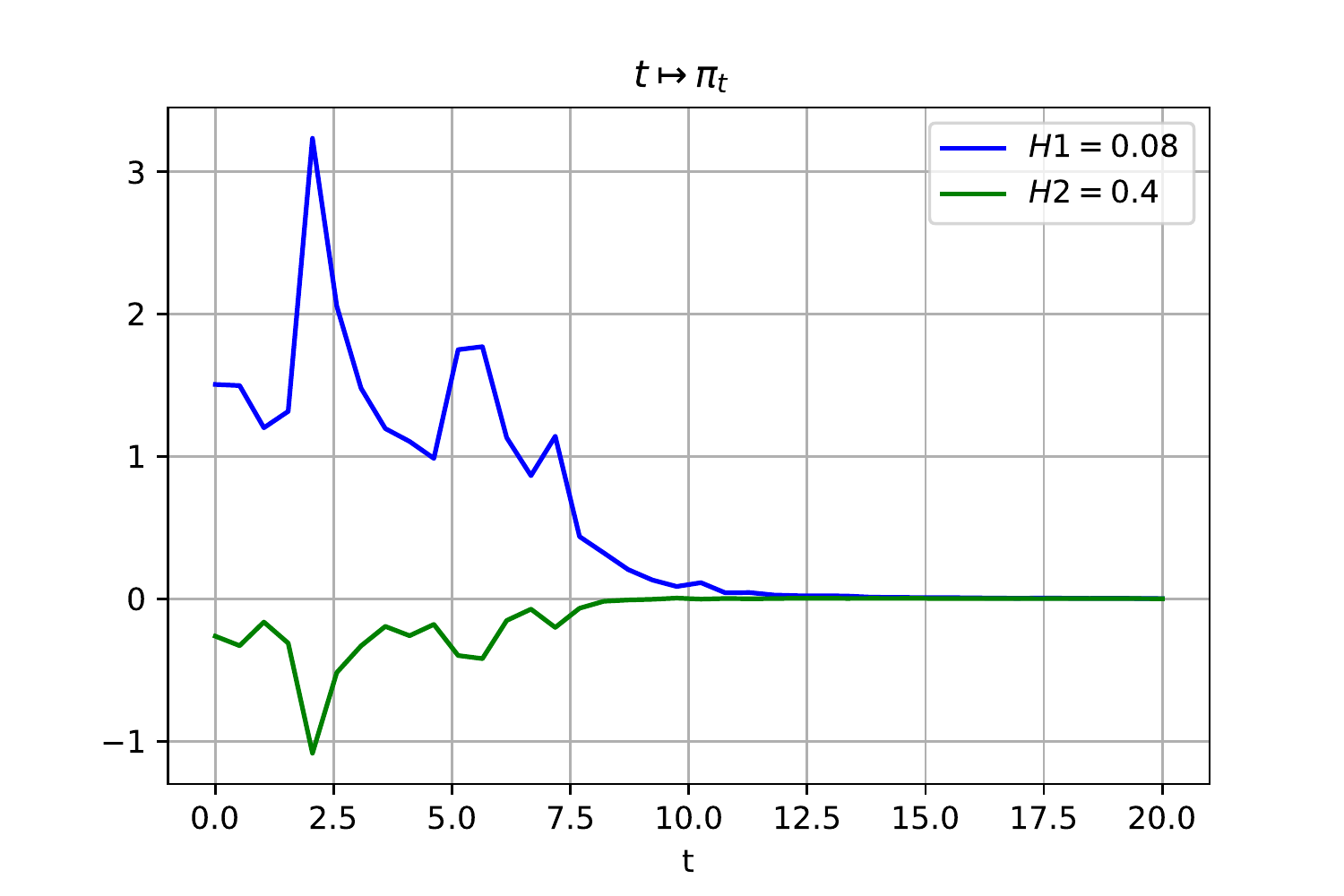}
    \includegraphics[width=70mm]{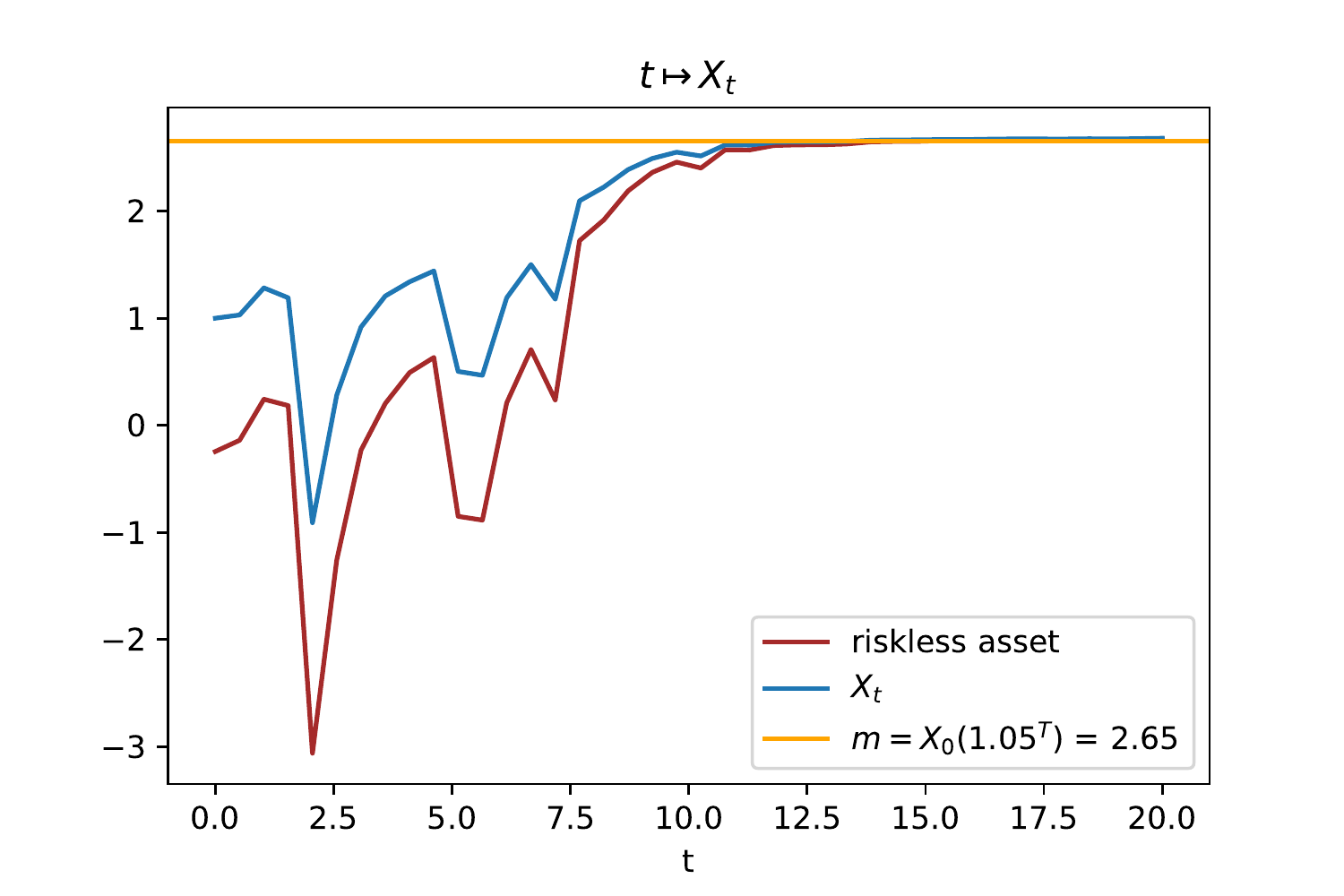}
\caption{Simulation for a larger horizon $T=20$ years. Note that a smaller $\eta$ had to be chosen in accordance with   Remark \ref{rk:cond_theta}.  (The parameters are: $H_1 = 0.08$, $H_2=0.4$, $\rho=0, \eta_1 = \eta_2 =0.1, c_i = -0.7$.) } 
\label{fig:big_T}
\end{figure}

\appendix
\section{Proof of the verification result}\label{A:verifresult}

In this section, we provide a detailed proof of  Theorem~\ref{T:verif}.  It is well-known that Markowitz problem \eqref{optimization_problem} is equivalent to the following max-min problem, see e.g.  \cite[Proposition 6.6.5]{pham2009continuous}:  
\bes{
    \label{outer_inner_optimization_pb}
    V(m) \; = & \; \max_{\eta \in \R} \min_{\substack{\c \in \Acal}} \Big\{  \E\Big[\big| X^{\c}_T - (m-\eta) \big|^2\Big] - \eta^2 \Big\}. 
}
Thus, solving problem \eqref{optimization_problem} involves two steps. First, the internal minimization problem in term of the Lagrange multiplier $\eta$ has to be solved. Second, the optimal value of $\eta$ for the external maximization problem has to be determined. Let us then introduce the inner optimization problem: 
\bes{
    \label{pb:P(c)}
    \tilde V(\xi) \;  := \;  \min_{\alpha \in \mathcal A}\E\Big[ \big| X^{\alpha}_T-\xi\big|^2\Big], \quad \xi \in \R. 
}

First, we provide a verification result for the inner optimization problem \eqref{pb:P(c)} via the standard completion of squares technique, see  for instance \citet[Proposition 3.1]{lim2002mean}, \citet[Proposition 3.3]{lim2004quadratic} and \citet[Theorem 3.1]{chiu2014mean}.

\begin{lemma}
\label{th:verification}
 Assume there exists a solution triplet $ (\Gamma, Z^1, Z^2) \in { \S^{\infty}_{\F}([0,T], \R)}$~~\\ $\times L^{2, loc}_{\F}([0,T], \R^d) \times L^{2, loc}_{\F}([0,T], \R^N) $ to the  Riccati BSDE \eqref{eq:riccati_sto}
such that $\Gamma_t>0$, for all $t\leq T$. 
Fix $\xi$ $\in$ $\R$, and assume that there exists an admissible control $\c^*(\xi)$  satisfying
\bes{
    \label{eq:optimal_control}
    \c^*_t(\xi) = -  \left(\lambda_t + Z^1_t + C Z^2_t\right) \left(X^ {\c^*(\xi)}_t  - \xi e^{-\int_t^T r(s) ds}\right), \quad 0 \leq t \leq T. 
}
Then, the  inner minimization problem \eqref{pb:P(c)} admits $\c^*(\xi)$ as an optimal feedback control and the optimal value is 
\bes{
    \label{eq:optimal_vamue_inner}
 \tilde V(\xi) \; = \;    \Gamma_0 \left|x_0 - \xi e^{-\int_0^T r(s) ds} \right|^2.
}
\end{lemma}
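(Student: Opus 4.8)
The plan is to use the classical completion-of-squares argument for the linear-quadratic problem \eqref{pb:P(c)}, with $\Gamma$ playing the role of the Riccati coefficient. First I would introduce, for an arbitrary admissible control $\c \in \Acal$, the process $\bar X_t := X^\c_t - \xi e^{-\int_t^T r(s)ds}$, which satisfies the same linear SDE as $X^\c$ but with no constant term: $d\bar X_t = (r(t)\bar X_t + \c_t^\T \lambda_t)dt + \c_t^\T dB_t$ and $\bar X_T = X^\c_T - \xi$. The objective becomes $\E[\bar X_T^2] = \E[\Gamma_T \bar X_T^2]$ since $\Gamma_T = 1$. The idea is then to compute, by It\^o's formula, the dynamics of $\Gamma_t \bar X_t^2$ using the BSDE \eqref{eq:riccati_sto} for $d\Gamma_t$ and the SDE above for $d\bar X_t$, including the cross-variation term $d\langle \Gamma, \bar X^2\rangle_t$ coming from the $B$-part of both. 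Collecting the $dt$-terms and completing the square in $\c_t$, one finds that the drift is a perfect square: it should come out to $\Gamma_t \,\big|\c_t + (\lambda_t + Z^1_t + C Z^2_t)\bar X_t\big|^2 \,dt$ up to the local-martingale parts, precisely because the $|\lambda_t + Z^1_t + C Z^2_t|^2$ term in the drift of $\Gamma$ is what cancels the negative contribution. This identifies the minimizing feedback as $\c_t = -(\lambda_t + Z^1_t + C Z^2_t)\bar X_t$, which is exactly \eqref{eq:optimal_control}.

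Concretely, I would write
\begin{equation}
\Gamma_T \bar X_T^2 = \Gamma_0 \bar X_0^2 + \int_0^T \Gamma_t\big|\c_t + (\lambda_t + Z^1_t + CZ^2_t)\bar X_t\big|^2\,dt + \int_0^T dM_t,
\end{equation}
where $M$ is a local martingale with $M_0 = 0$ whose explicit form (involving $\bar X_t^2 (Z^1_t)^\T dB_t$, $\bar X_t^2(Z^2_t)^\T dW_t$, $2\Gamma_t \bar X_t \c_t^\T dB_t$, etc.) I would record but not simplify further. Taking expectations, if one can justify $\E[\int_0^T dM_t] = 0$, then $\E[\bar X_T^2] = \Gamma_0\bar X_0^2 + \E\big[\int_0^T \Gamma_t|\c_t + (\lambda_t + Z^1_t + CZ^2_t)\bar X_t|^2 dt\big] \geq \Gamma_0\bar X_0^2 = \Gamma_0|x_0 - \xi e^{-\int_0^T r(s)ds}|^2$, since $\Gamma_t > 0$ by assumption; and equality holds when $\c$ equals the claimed feedback control $\c^*(\xi)$, which is admissible by hypothesis. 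This gives both the optimality of $\c^*(\xi)$ and the value \eqref{eq:optimal_vamue_inner}.

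The main obstacle is the justification that the local martingale $M$ is a true martingale (equivalently, that the stochastic integrals have zero expectation), because $\Gamma$, $Z^1$, $Z^2$, and $\lambda$ are only in $\S^\infty$ and $L^{2,loc}$, hence a priori unbounded, and $X^\c$ merely satisfies the square-integrability \eqref{eq:estimateX}. The standard device is a localization argument: introduce stopping times $\tau_n \uparrow T$ reducing $M$ (and controlling $\int_0^{\cdot}|Z^i_t|^2 dt$), write the identity up to $\tau_n$, take expectations to get $\E[\Gamma_{\tau_n}\bar X_{\tau_n}^2] = \Gamma_0\bar X_0^2 + \E[\int_0^{\tau_n}\Gamma_t|\cdots|^2 dt]$, and then pass to the limit. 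For the minimization inequality one only needs Fatou on the left and monotone convergence on the right, so no uniform integrability is required for the lower bound; for the equality under $\c = \c^*(\xi)$ one additionally uses that $\c^*(\xi)$ is admissible — in particular $X^{\c^*(\xi)}$ satisfies \eqref{eq:estimateX} — together with the boundedness $0 < \Gamma_t \le e^{2\int_t^T r(s)ds}$ (which follows from the Laplace-transform representation of $\Gamma$ noted after Theorem \ref{T:verif}) to upgrade Fatou to genuine convergence. I would also note that the $L^{2,loc}$ integrability of $Z^1, Z^2$ suffices here precisely because the argument is localized; the sharper exponential condition (H2) is only needed later, in Theorem \ref{T:verif}, to establish admissibility of $\c^*$, not within this lemma.
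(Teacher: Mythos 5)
Your overall route is exactly the paper's: pass to $\tilde X_t=X_t^\alpha-\xi e^{-\int_t^T r(s)ds}$, apply It\^o to $\Gamma_t\tilde X_t^2$, use the drift of \eqref{eq:riccati_sto} to complete the square in $\alpha$, localize the stochastic integrals by stopping times, and pass to the limit. The identification of the minimizer \eqref{eq:optimal_control} and the value \eqref{eq:optimal_vamue_inner} then follow as you say.

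There is, however, a genuine flaw in your limit passage. You claim that for the lower bound over all admissible $\alpha$ ``one only needs Fatou on the left ... no uniform integrability is required.'' Fatou goes the wrong way here: from the localized identity $\E[\Gamma_{T\wedge\tau_n}\tilde X_{T\wedge\tau_n}^2]=\Gamma_0\tilde X_0^2+\E[\int_0^{T\wedge\tau_n}\Gamma_t|\alpha_t+h_t\tilde X_t|^2dt]$, Fatou only yields $\E[\tilde X_T^2]\le\liminf_n\E[\Gamma_{T\wedge\tau_n}\tilde X_{T\wedge\tau_n}^2]$, i.e.\ an \emph{upper} bound on the cost, which is inconclusive for showing $\E[\tilde X_T^2]\ge\Gamma_0\tilde X_0^2$ (positive local submartingales need not transfer the inequality to the terminal time without uniform integrability). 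What you actually need for the generic lower bound is $\limsup_n\E[\Gamma_{T\wedge\tau_n}\tilde X_{T\wedge\tau_n}^2]\le\E[\tilde X_T^2]$, and that is precisely where dominated convergence enters, as in the paper: every competitor $\alpha\in\Acal$ satisfies \eqref{eq:estimateX}, so $\E[\sup_{t\le T}|\tilde X_t^\alpha|^2]<\infty$ supplies the domination (combined with monotone convergence on the right-hand side, using $\Gamma>0$). So your allocation is backwards: the cheap Fatou step is only useful for the candidate $\alpha^*(\xi)$ (where the square term vanishes and Fatou gives $\E[\tilde X_T^{*2}]\le\Gamma_0\tilde X_0^2$), while the bound valid for \emph{all} admissible controls is the one requiring the integrability — which, fortunately, is built into the definition of admissibility, so the fix is immediate. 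A second, minor point: the deterministic bound $0<\Gamma_t\le e^{2\int_t^T r(s)ds}$ you invoke comes from the Laplace-transform representation and is not among the hypotheses of this lemma (only $\Gamma\in\S^\infty_\F([0,T],\R)$ and $\Gamma_t>0$ are assumed); the argument should be phrased, as in the paper, using only these assumptions together with \eqref{eq:estimateX}.
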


\begin{proof}
Let us first define $\Tilde{X}^{\alpha}_t = X_t^{\alpha} - \xi e^{-\int_t^T r(s) ds}$, for any $\c \in \mathcal{A}$. Then, by It\^o's lemma we have  
\bes{
    d \Tilde{X}^{\alpha}_t &=  \big( r(t) \Tilde{X}^{\alpha}_t  + \alpha_t^\T \lambda_t \big) dt + \alpha_t^\T dB_t, \quad 0 \leq t \leq T, \; 
    \Tilde{X}^{\alpha}_0  \; = \;   x_0 - \xi e^{-\int_0^T r(s) ds}. 
}
As a result, $\Tilde{X}^{\c}$ and $X^{\c}$ have the same dynamics and $\Tilde{X}^{\c}_T=X^{\c}_T-\xi$ so  that problem \eqref{pb:P(c)} can be alternatively written as
\bes{
\min_{\alpha \in \mathcal A}\E\Big[ \big|\Tilde{X}^{\alpha}_T\big|^2\Big].
}
To ease notations, we set $h_t = \lambda_t + Z^1_t + C Z^2_t $. For any $\c \in \mathcal A$, It\^o's lemma combined with \eqref{eq:riccati_sto} and a completion of squares in $\c$ yield
\bes{
    d \Big(\Gamma_t \big| \Tilde{X}_t^{\c} \big|^2\Big) \; =& \;  \big| \Tilde{X}_t^{\c} \big|^2 \Gamma_t \big(-2 r(t) + h_t^\T  h_t \big)dt  
    + \Gamma_t \big| \Tilde{X}_t^{\c} \big|^2 \Big(\big(Z^1_t\big)^\T dB_t + \big(Z^2_t\big)^\T dW_t \Big) \\
    &\; + \Gamma_t \Big(2 \Tilde{X}_t^{\c} \big(r(t) \Tilde{X}_t^{\c} +  \alpha_t^\T \lambda_t\big) + \c_t^\T \c_t  \Big) dt + 2\Gamma_t \Tilde{X}_t^{\c} \c_t^\T dB_t \\
    & \; + 2 \c^\T_t \left( Z^1_t + C Z^2_t\right) \Tilde{X}_t^{\c} dt \\
    =& \; \big( \c_t + h_t \Tilde{X}_t^{\c} \big)^\T \Gamma_t \big( \c_t + h_t\Tilde{X}_t^{\c} \big) dt \\
    & \; + 2\Gamma_t \Tilde{X}_t^{\c} \c_t^\T dB_t + \Gamma_t \big| \Tilde{X}_t^{\c} \big|^2 \Big(\big(Z^1_t\big)^\T dB_t + \big(Z^2_t\big)^\T dW_t \Big).
}
As a consequence,  {using $\Gamma_T=1$}, we get 
\bes{
\big| \Tilde{X}_T^{\c} \big|^2 =& \Gamma_0 \big| \Tilde{X}_0^{\c} \big|^2 + \int_0^T \big( \c_s + h_s \Tilde{X}_s^{\c} \big)^\T  \Gamma_s \big( \c_s + h_s \Tilde{X}_s^{\c} \big) ds \\
    &+ \int_0^T 2\Gamma_s \Tilde{X}_s^{\c} \c_s^\T dB_s +  \int_0^T 2\Gamma_s \big| \Tilde{X}_s^{\c} \big|^2 \Big(\big(Z^1_s\big)^\T dB_s + \big(Z^2_s\big)^\T dW_s \Big).
}
Note that the stochastic integrals
\begin{align}
    \int_0^. 2\Gamma_s \Tilde{X}_s^{\c} \c_s^\T dB_s, &&  \int_0^. \Gamma_s \big| \Tilde{X}_s^{\c} \big|^2 \left(Z^1_s\right)^\T dB_s, &&  \int_0^. \Gamma_s \left( \Tilde{X}_s^{\c} \right)^2  \left(Z^2_s\right)^\T dW_s,
\end{align}
are well-defined since $X^{\alpha}$ is continuous, $(\alpha,Z^1,Z^2)$ are in $L^{2,{loc}}_{\mathbb F}([0,T])$ and $\Gamma$ in $\S^{\infty}_{\F}([0,T], \R)$. Furthermore, they are    local martingales. Let $\{\tau_k\}_{k\geq 1}$ be a common localizing increasing sequence of stopping times  converging to $T$. Then, 
\bes{
\E \Big[ \big| \Tilde{X}_{T \wedge \tau_k}^{\c} \big|^2 \Big] \;=& \;  \Gamma_0 \big| \Tilde{X}_0^{\c} \big|^2 
+ \E \Big[ \int_0^{T \wedge \tau_k} \big( \c_s + h_s \Tilde{X}_s^{\c} \big)^\T  \Gamma_s \big( \c_s + h_s \Tilde{X}_s^{\c} \big) ds \Big]. 
}
Since $\alpha \in \mathcal A$, $X^{\c}$ satisfies \eqref{eq:estimateX}, and so $\E\left[\sup_{t\leq T}  |\Tilde X^{\c}_t|^2\right]<\infty$. An application of the dominated convergence theorem on the left term combined with the monotone convergence theorem on the right term, recall that $\Gamma$ is $\S^d_+$-valued, yields, as $k \to \infty$,
\bes{
\E \Big[ \big| \Tilde{X}_{T}^{\c} \big|^2 \Big] \; =& \; \Gamma_0 \big| \Tilde{X}_0^{\c} \big|^2 
+ \E \Big[ \int_0^{T} \big( \c_s +  h_s \Tilde{X}_s^{\c} \big)^\T  \Gamma_s  \big( \c_s + h_s \Tilde{X}_s^{\c} \big) ds \Big].
}
Since $\Gamma_s$ is positive definite for any $s \leq T$,  we obtain that the optimal strategy  $\c^*(\xi)$ is given by \eqref{eq:optimal_control} and the optimal value of \eqref{pb:P(c)} is equal to \begin{align*}
\tilde V(\xi) \; = \; \Gamma_0 \big| \Tilde{X}_0^{\c^*(\xi)} \big|^2 \;= \;  \Gamma_0 \big| X_0 - \xi e^{-\int_0^T r(s) ds} \big|^2,
\end{align*}
which gives \eqref{eq:optimal_vamue_inner}. 
\end{proof}

\vspace{1mm}

We next address the admissibility of the candidate for the optimal control. 

 \begin{lemma}\label{L:outer} Assume that  there exists a solution triplet $ (\Gamma, Z^1, Z^2) \in \S^{\infty}_{\F}([0,T], \R)  \times L^{2, loc}_{\F}([0,T], \R^d) \times L^{2, loc}_{\F}([0,T], \R^N)$ to the  Riccati BSDE \eqref{eq:riccati_sto} such that  \eqref{eq:assumption_novikov}  holds
for some  $p > 2$ and a constant $a(p)$ given by \eqref{eq:constap}.
    Then, for any $\xi \in \R$, there exists an admissible control process $\c^*(\xi)$ satisfying  \eqref{eq:optimal_control}.
\end{lemma}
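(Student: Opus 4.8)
The plan is to substitute the feedback prescription \eqref{eq:optimal_control} into the wealth dynamics \eqref{eq:wealth}, to solve the resulting \emph{linear} SDE explicitly, and then to verify the two conditions defining $\mathcal A$: that the candidate control belongs to $L^{2,loc}_{\F}([0,T],\R^d)$, and that its wealth process obeys the moment bound \eqref{eq:estimateX}. Write $h_t:=\lambda_t+Z^1_t+CZ^2_t$ and, for fixed $\xi\in\R$, $\tilde X_t:=X^{\alpha^*}_t-\xi e^{-\int_t^T r(s)\,ds}$. Imposing $\alpha^*_t=-h_t\tilde X_t$ in \eqref{eq:wealth} forces $\tilde X$ to solve the linear SDE
\begin{align}
 d\tilde X_t=\tilde X_t\big[(r(t)-h_t^\top\lambda_t)\,dt-h_t^\top dB_t\big],\qquad \tilde X_0=x_0-\xi e^{-\int_0^T r(s)\,ds}.
\end{align}
Since $\lambda$ is continuous, hence pathwise bounded, and $Z^1,Z^2\in L^{2,loc}_{\F}$, we have $h\in L^{2,loc}_{\F}([0,T])$ and $\int_0^T|h_s^\top\lambda_s|\,ds<\infty$ a.s., so this equation admits the unique continuous $\F$-adapted solution
\begin{align}
 \tilde X_t=\tilde X_0\,\mathcal{E}\Big(-\int_0^\cdot h_s^\top dB_s\Big)_t\,\exp\Big(\int_0^t\big(r(s)-h_s^\top\lambda_s\big)\,ds\Big).
\end{align}
Setting $X^{\alpha^*}_t:=\tilde X_t+\xi e^{-\int_t^T r(s)\,ds}$ and $\alpha^*_t:=-h_t\tilde X_t$ then produces a solution of \eqref{eq:wealth} satisfying the feedback relation \eqref{eq:optimal_control}; pathwise continuity of $\tilde X$ gives $\sup_{t\le T}|\tilde X_t|<\infty$ a.s., whence $\int_0^T|\alpha^*_s|^2\,ds\le(\sup_{t\le T}|\tilde X_t|^2)\int_0^T|h_s|^2\,ds<\infty$ a.s., i.e.\ $\alpha^*\in L^{2,loc}_{\F}([0,T],\R^d)$.

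It remains to prove $\E[\sup_{t\le T}|X^{\alpha^*}_t|^2]<\infty$, equivalently $\E[\sup_{t\le T}|\tilde X_t|^2]<\infty$. Introducing the continuous local martingale $N_t:=-2\int_0^t h_s^\top dB_s$ (so $\langle N\rangle_t=4\int_0^t|h_s|^2\,ds$), the explicit formula gives $|\tilde X_t|^2=|\tilde X_0|^2\,\mathcal{E}(N)_t\,\exp\big(2\int_0^t r(s)\,ds-2\int_0^t h_s^\top\lambda_s\,ds+\int_0^t|h_s|^2\,ds\big)$; bounding $-2h_s^\top\lambda_s+|h_s|^2\le 2|h_s|^2+|\lambda_s|^2$ and using boundedness of $r$, the factor multiplying $\mathcal{E}(N)_t$ is dominated by a quantity not depending on $t$, so
\begin{align}
 \sup_{t\le T}|\tilde X_t|^2\;\le\;|\tilde X_0|^2\,e^{2\int_0^T|r(s)|\,ds}\,\Big(\sup_{t\le T}\mathcal{E}(N)_t\Big)\,\exp\Big(\int_0^T\big(2|h_s|^2+|\lambda_s|^2\big)\,ds\Big).
\end{align}
I would then apply H\"older's inequality with conjugate exponents $a,b>1$ (to be tuned in terms of the $p>2$ of hypothesis (H2)), reducing the finiteness of $\E[\sup_{t\le T}|\tilde X_t|^2]$ to that of $\E[(\sup_{t\le T}\mathcal{E}(N)_t)^a]$ and of $\E[\exp(b\int_0^T(2|h_s|^2+|\lambda_s|^2)\,ds)]$. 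Using $|Cz|\le|C|\,|z|$ for the norm $|C|^2=\tr(C^\top C)$ together with elementary inequalities for $h_s=\lambda_s+Z^1_s+CZ^2_s$, one bounds $2|h_s|^2+|\lambda_s|^2$ by a constant (depending on $|C|$) times $|\lambda_s|^2+|Z^1_s|^2+|Z^2_s|^2$, which handles the second factor via (H2). For the first factor I would use the classical estimate that $\mathcal{E}(N)$ — a nonnegative local martingale — satisfies $\E[(\sup_{t\le T}\mathcal{E}(N)_t)^a]<\infty$ as soon as $\E[\exp(8a^2\int_0^T|h_s|^2\,ds)]<\infty$: Novikov's criterion (granted by (H2)) makes $\mathcal{E}(N)$ and $\mathcal{E}(2aN)$ true martingales; the algebraic identity $\mathcal{E}(N)_T^{\,a}=\mathcal{E}(2aN)_T^{1/2}\exp\big((4a^2-2a)\int_0^T|h_s|^2\,ds\big)$ together with Cauchy--Schwarz bounds $\E[\mathcal{E}(N)_T^{\,a}]$ by a finite power of $\E[\exp(8a^2\int_0^T|h_s|^2\,ds)]$; and Doob's $L^a$-maximal inequality transfers this to the running maximum. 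A further appeal to the bound on $|h_s|^2$ makes this finite under (H2).

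The role of the precise constant $a(p)$ in \eqref{eq:constap} is exactly to guarantee that, for a judicious choice of the H\"older exponents $a,b$ tied to $p$, \emph{all} the exponential moments arising above — the two Novikov moments, the Cauchy--Schwarz moment, and $\E[\exp(b\int_0^T(2|h_s|^2+|\lambda_s|^2)\,ds)]$ — are simultaneously dominated by $\E[\exp(a(p)\int_0^T(|\lambda_s|^2+|Z^1_s|^2+|Z^2_s|^2)\,ds)]$, hence finite by \eqref{eq:assumption_novikov}. This bookkeeping of constants — in particular splitting $h_s^\top\lambda_s$ so that $|C|$ rather than $|C|^2$ enters the decisive estimates, and balancing the powers of $p$ — is the one genuinely delicate point; the rest is routine. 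Once it is carried out, $\E[\sup_{t\le T}|\tilde X_t|^2]<\infty$, so $\alpha^*(\xi)\in\mathcal A$ and $\alpha^*(\xi)$ satisfies \eqref{eq:optimal_control}, which completes the proof.
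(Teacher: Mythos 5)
Your proposal is correct and takes essentially the same route as the paper's proof: solve the linear SDE in closed form as a stochastic exponential, then obtain the moment bound on $\sup_{t\le T}|\tilde X_t|$ from (H2) via Novikov, a Cauchy--Schwarz split against $\mathcal{E}(2a N)$, Doob's maximal inequality, and the bound $|h_s|^2\le 3(1+|C|^2)(|\lambda_s|^2+|Z^1_s|^2+|Z^2_s|^2)$. The only differences are organizational — the paper estimates the $p$-th moment with the given $p>2$ (and additionally shows $\alpha^*\in L^2_\F$, beyond the $L^{2,loc}$ required by the definition of $\mathcal A$), whereas you bound the second moment with H\"older exponents left to be tuned — and that tuning does go through with slack in $a(p)$ (e.g. any $a\in\bigl(1,\sqrt{p^2-p/4}\,\bigr]$ with conjugate $b$ works), so the sketched bookkeeping is not a gap.
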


\begin{proof}
Fix  $\xi \in \R$. We first prove that there exists a control   $\c^*(\xi)$    satisfying  \eqref{eq:optimal_control}. For this, we prove that  the corresponding wealth equation \eqref{eq:wealth}
admits a solution.  As in the proof of Lemma~\ref{th:verification}, it is enough to consider the modified equation 
\begin{align*}
d\Tilde X_t^* &= \big(r(t) \Tilde X_t^*  + \lambda_t^\top A_t \Tilde X_t^*  \big)dt + \big(A_t   \Tilde X_t^* \big)^\top dB_t, \quad
\Tilde X_0^* \; = \; x_0 -\xi e^{-\int_0^T r(s) ds},
\end{align*}
where $A_t = - \left(\lambda_t + Z^1_t + C Z^2_t\right)$, and then set  $X_t^{*}= \Tilde{X}^{*}_t + \xi e^{-\int_t^T r(s) ds}$. By virtue of  It\^o's lemma the unique continuous solution is given by    
\bes{
    \Tilde{X}^{*}_t =\Tilde{X}^{*}_0 \exp\Big(\int_0^t \big(r(s) + \lambda_s^\T A_s - \frac{A_s^\T A_s}{2} \big)ds + \int_0^t A_s^\T dB_s\Big).
}
Setting $\alpha^*_t(\xi)$ $:=$ $A_t\Tilde X^*_t$, we obtain that $\alpha^*(\xi)$ satisfies \eqref{eq:optimal_control} with the controlled wealth $X^{\alpha(\xi)^*}=X^*$. The crucial step is now to  
obtain the admissibility condition \eqref{eq:estimateX}. For that purpose, observe by virtue of  \eqref{eq:assumption_novikov}, that the Dol\'eans-Dade exponential $\mathcal E\left( \int_0^{\cdot} A_s^\top dB_s\right)$ satisfies Novikov's condition, and is therefore a true martingale. 
Whence, successive applications of the inequality $ab\leq (a^2+b^2)/2 $ and  Doob's maximal inequality yield, for some constant $K>0$ which may vary from line to line,
\bes{
\E \Big[ \sup_{t \in [0,T]} |\tilde{X}^*_t|^p \Big] &\leq  K  \E \Big[ \sup_{t \in [0,T]}  \big|  e^{\int_0^t {\left(r(s) + \lambda_s^\T A_s \right)}ds} \big|^{2p}  \Big] 
+ K \E \Big[ \sup_{t \in [0,T]} \Big| e^{-\int_0^t \frac{A_s^\T  A_s}{2}  ds + \int_0^t A_s^\T dB_s} \Big|^{2p}  \Big] \\
   &\leq K \E \Big[   e^{\int_0^T 2p \big| \lambda_s^\T A_s \big| ds}  \Big] + K \E \Big[   e^{- p \int_0^T A_s^\T  A_s  ds + 2p\int_0^T A_s^\T dB_s}  \Big] \\
    &= K \left( \textbf{1} + \textbf{2} \right),
 }
which is finite since
\bes{
    \textbf{1} &\leq  \E \left[ \exp\left({a(p) \int_0^T \left( |\lambda_s|^2 + |Z^1_s|^2 + |Z^2_s|^2\right)  ds}\right)    \right]< \infty,
}
and, {by virtue of the Cauchy-Schwarz inequality},
\bes{
    \textbf{2} &\leq \left(   \E \left[ e^{(8p^2 - 2p) \int_0^T  A_s^\T  A_s   ds} \right]    \right)^{1/2} \left(\E \left[  e^{-8p^2\int_0^T A_s^\T  A_s ds + 4p \int_0^T A^\T_s dB_s}    \right] \right)^{1/2} \\
  &  \leq  \left(   \E \left[ e^{a(p) \int_0^T \left(|\lambda_s|^2 + |Z^1_s|^2 + |Z^2_s|^2 \right)  ds} \right]    \right)^{1/2} \times 1  < \infty,
}
{where we used Jensen's inequality to bound 
\bes{
 A_s^\top A_s = |\lambda_s + Z_s^1 + CZ_s^2|^2 \leq 3(|\lambda_s|^2 + |Z^1_s|^2 + |CZ^2_s|^2) \leq 3 (1 + |C|^2)(|\lambda_s|^2 + |Z^1_s|^2 + |Z^2_s|^2),}}
together with assumption (H2) and Novikov's condition to the Dol\'eans-Dade exponential $\mathcal{E}(4p \int_0^{{\cdot}} A_s^\T dB_s)$. 
Finally, to get that $\c^*(\xi)$ is admissible,  we are left to prove that $\alpha^*(\xi) \in L^2_{\F}([0,T], \R^d)$.   Let $2/p + 1/\hat{q}=1$, by H\"older's inequality we obtain 
\bes{
    \E \left[ \int_0^{T} |\c_s^*(\xi)|^2 ds \right] & { =} \E \left[ \int_0^{T} | A_s \tilde{X}_s^*|^2 ds \right] \\
    &\leq  \E \left[ \sup_{t \in [0,T]} |\tilde{X}_t^*|^2 \int_0^T |A_s|^2 ds \right] \\
   & \leq \left(  \E \left[ \sup_{t \in [0,T]} |\tilde{X}_t^*|^{p}  \right] \right)^{2/p} \left( \E\left[ \left( \int_0^T |A_s|^2 ds \right)^{\hat{q}}  \right]  \right)^{1/\hat{q}} \\
  &   \leq  C  \left(  \E \left[ \sup_{t \in [0,T]} |\tilde{X}_t^*|^{p}  \right] \right)^{2/p}   \left( \E\left[ \left( \int_0^T \left(|\lambda_s|^2 +|Z^1_s|^2+ |Z^2_s|^2 \right) ds \right)^{\hat{q}}  \right]  \right)^{1/\hat{q}}\\
    &  < \infty,
}
where the last term is finite due to condition  \eqref{eq:assumption_novikov} and the inequality $|z|^q\leq c_{q}e^{|z|}$. The proof is complete.
\end{proof}

\vspace{1mm}

Finally, combining the above,   we deduce the solution for the outer optimization problem \eqref{optimization_problem} under a non-degeneracy condition on the solution  $\Gamma$ to the Riccati BSDE,  yielding Theorem~\ref{T:verif}.

\begin{proof}[Proof of Theorem~\ref{T:verif}]
From Lemmas \ref{th:verification} and \ref{L:outer}, we have that the max-min problem \eqref{outer_inner_optimization_pb} (which is equivalent to the Markowitz problem \eqref{optimization_problem}) is equivalent to 
    \bes{
    \label{outer_inner_optimization_pb_}
    &\max_{\eta \in \R}  J(\eta), \quad \mbox{ with } \; J(\eta) \; = \;  \Gamma_0 \big|X_0 - (m-\eta) e^{-\int_0^T r(s) ds} \big|^2  - \eta^2. \\
}
Furthermore, condition (H1): $\Gamma_0 <  e^{2 \int_0^T r(s) ds}$, ensures that the quadratic function $J$  is strictly concave. This yields that  the maximum is achieved 
from the first-order condition $J'(\eta^*)$ $=$ $0$, which gives 
\begin{align*}
\eta^* &= \;  \frac{\Gamma_0 e^{-\int_0^T r(s) ds} \Big( x_0 - m e^{-\int_0^T r(s) ds } \Big)}{1 - \Gamma_0 e^{-2\int_0^T r(s) ds}},
\end{align*}
and thus $\xi^*$ $=$ $m-\eta^*$ is given by  \eqref{eq:eta_star}.  We conclude that the optimal control is equal to  $\alpha^*=\alpha^*(\xi^*)$ as in \eqref{eq:optimal_control_final}, and by 
\eqref{outer_inner_optimization_pb}, the optimal value of \eqref{optimization_problem}  is equal to $V(m)$ $=$ $\tilde V(\xi^*) - (\eta^*)^2$, given  by \eqref{eq:value_final}. 
\end{proof}

\section{Proofs of some technical lemmas}\label{A:proofs+}

\subsection{Reminder on resolvents of integral operators}\label{A:resolvents}

\begin{lemma}\label{L:staroperation}
	Let $K$ satisfy \eqref{assumption:K_stein} and $L \in L^2([0,T]^2,\R^{N\times N})$. Then, $ K\star L$ satisfies  \eqref{assumption:K_stein}. Furthemore,  if $L$ satisfies  \eqref{assumption:K_stein}, then, $(s,u)\mapsto (K \star L^*)(s,u)$ is continuous.
\end{lemma}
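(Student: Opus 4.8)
The statement has three assertions: (a) if $K$ satisfies \eqref{assumption:K_stein} and $L\in L^2([0,T]^2,\R^{N\times N})$, then $K\star L$ satisfies \eqref{assumption:K_stein}; (b) if, in addition, $L$ satisfies \eqref{assumption:K_stein}, then $(s,u)\mapsto (K\star L^*)(s,u)$ is continuous. Recall that \eqref{assumption:K_stein} has two parts: a uniform $L^2$-in-the-second-variable bound $\sup_{t\le T}\int_0^T|K(t,s)|^2ds<\infty$, and an $L^2$-continuity in the first variable $\lim_{h\to0}\int_0^T|K(u+h,s)-K(u,s)|^2ds=0$. So for (a) I must verify both parts for $K\star L$, and for (b) I must upgrade to pointwise continuity.

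For part (a), write $(K\star L)(s,u)=\int_0^T K(s,z)L(z,u)\,dz$. For the uniform bound, apply Cauchy--Schwarz in $z$: $|(K\star L)(s,u)|^2\le\big(\int_0^T|K(s,z)|^2dz\big)\big(\int_0^T|L(z,u)|^2dz\big)$, then integrate in $u$ and take the sup in $s$, using $\sup_s\int_0^T|K(s,z)|^2dz=:c_K<\infty$ and $\int_0^T\int_0^T|L(z,u)|^2dz\,du<\infty$. For the $L^2$-continuity in the first variable, write $(K\star L)(s+h,u)-(K\star L)(s,u)=\int_0^T\big(K(s+h,z)-K(s,z)\big)L(z,u)\,dz$; again Cauchy--Schwarz in $z$ gives $|(K\star L)(s+h,u)-(K\star L)(s,u)|^2\le\big(\int_0^T|K(s+h,z)-K(s,z)|^2dz\big)\big(\int_0^T|L(z,u)|^2dz\big)$, and integrating in $u$ yields a bound of the form $\big(\int_0^T|K(s+h,z)-K(s,z)|^2dz\big)\cdot\|L\|_{L^2([0,T]^2)}^2$, which tends to $0$ as $h\to0$ by the second part of \eqref{assumption:K_stein} for $K$. (One should note $K\star L\in L^2([0,T]^2)$ as already recorded after \eqref{eq:starproduct}, so the object is well-defined.)

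For part (b), note $(K\star L^*)(s,u)=\int_0^T K(s,z)L^*(z,u)\,dz=\int_0^T K(s,z)L(u,z)^\top\,dz$, i.e. it is the $L^2$-inner-product-type pairing of the slice $z\mapsto K(s,z)$ against the slice $z\mapsto L(u,z)$. Both slices are elements of $L^2([0,T],\R^{N\times N})$ by the uniform bound in \eqref{assumption:K_stein}, and the maps $s\mapsto K(s,\cdot)$ and $u\mapsto L(u,\cdot)$ are continuous from $[0,T]$ into $L^2([0,T],\R^{N\times N})$ precisely by the second condition in \eqref{assumption:K_stein} (for $K$ and $L$ respectively). Then for $(s,u),(s',u')$ write the difference $(K\star L^*)(s,u)-(K\star L^*)(s',u')$ as $\int_0^T\big(K(s,z)-K(s',z)\big)L(u,z)^\top dz+\int_0^T K(s',z)\big(L(u,z)-L(u',z)\big)^\top dz$ and bound each term by Cauchy--Schwarz, getting $\le\|K(s,\cdot)-K(s',\cdot)\|_{L^2}\,\|L(u,\cdot)\|_{L^2}+\|K(s',\cdot)\|_{L^2}\,\|L(u,\cdot)-L(u',\cdot)\|_{L^2}$; using the uniform bounds for the first factors of each product and the $L^2$-continuity for the second, this goes to $0$ as $(s',u')\to(s,u)$. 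This is essentially the joint continuity of a bilinear pairing of two norm-continuous Hilbert-space-valued curves, and is the only mildly delicate point — but it is routine once the two slice maps are seen to be $L^2$-continuous.

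I do not anticipate a genuine obstacle here; the main thing to be careful about is keeping track of which condition in \eqref{assumption:K_stein} is used where (the uniform bound for the "static" factor, the $L^2$-continuity for the "varying" factor), and making sure in (b) that $L$ also satisfies \eqref{assumption:K_stein}, since that hypothesis is exactly what is needed for the continuity in the $u$-variable.
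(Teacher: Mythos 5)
Your proposal is correct and follows essentially the same route as the paper: the first assertion is exactly the Cauchy--Schwarz argument the paper invokes, and your continuity argument for $(s,u)\mapsto (K\star L^*)(s,u)$ (splitting the increment and using the $L^2$-continuity of the slices $s\mapsto K(s,\cdot)$, $u\mapsto L(u,\cdot)$ together with the uniform $L^2$-bounds) is precisely the argument the paper delegates to \citet[Lemma 3.2]{jaber2019laplace}. No gaps.
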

\begin{proof}
	An application of the Cauchy-Schwarz inequality yields the first part. The second part follows along the same lines as in the proof of \citet[Lemma 3.2]{jaber2019laplace}.
\end{proof}

\vspace{1mm}

For a kernel $K \in L^2([0,T]^2,\R^{N\times N})$, we define its resolvent $R_T \in L^2([0,T]^2,\R^{N\times N})$ by the unique solution to 
\begin{align}\label{eq:resolventeqkernel}
R_T = K + K \star R_T, \quad  \quad  K \star R_T =  R_T \star K.
\end{align} 
In terms of integral operators, this translates into 
\begin{align*}
\boldsymbol{R}_T =  \boldsymbol{K} + \boldsymbol{K}\boldsymbol{R}_T, \quad \boldsymbol{K}\boldsymbol{R}_T=\boldsymbol{R}_T\boldsymbol{K}.
\end{align*}
In particular, if $K$ admits a resolvent,  $({\id}-\boldsymbol{K})$ is invertible and
\begin{align}\label{eq:integralreso}
({\id}-\boldsymbol{K})^{-1}=\id+\boldsymbol{R}_T,
\end{align}
where $\id$ denotes the identity operator, i.e. $(\id f)=f$ for all $f \in L^2\left([0,T],\R^N \right)$.

\vspace{1mm}

The following lemma establishes the existence of resolvents for the two classes of kernels introduced above. 

\begin{lemma}\label{L:resolventvolterra}
	Let $K \in L^2\left([0,T]^2,\R^{N\times N}\right)$. $K$ admits a resolvent if either one of the following conditions hold:
	\begin{enumerate}
		\item \label{L:resolventvolterrai} 
		$K$ is a Volterra kernel of continuous and bounded type   in $L^2$ in the sense of Definition~\ref{D:kernelvolterra}. In this case, the resolvent is again a Volterra kernel of continuous and bounded type. 
		\item\label{L:resolventvolterraii}
		$K$ is symmetric  {nonpositive} in the sense of Definition~\ref{D:nonnegative} and $(s,u)\mapsto K(s,u)$ is continuous. 
	\end{enumerate}
\end{lemma}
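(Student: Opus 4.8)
The plan is to treat the two cases separately, since they rely on genuinely different mechanisms. For case~\ref{L:resolventvolterrai}, I would invoke the classical Volterra theory: the resolvent of a Volterra kernel of continuous and bounded type in $L^2$ is constructed by the Neumann (Picard) series $R_T = \sum_{n\geq 1} K^{\star n}$, where $K^{\star n}$ denotes the $n$-fold $\star$-product. First I would note that each $K^{\star n}$ is again a Volterra kernel (the support condition $s>t \Rightarrow K(t,s)=0$ is preserved under $\star$), and by Lemma~\ref{L:staroperation} each $K^{\star n}$ inherits \eqref{assumption:K_stein}. The key estimate is the nilpotency-type bound $\sup_{t}\int_0^T |K^{\star n}(t,s)|^2\,ds \leq c\, M^{n}/(n-1)!$ for a constant $M$ depending on $\sup_t\int_0^T|K(t,s)|^2ds$, obtained by iterating Cauchy--Schwarz and using the shrinking domain of integration $[s,t]$. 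This makes the series converge absolutely in the relevant norm, so $R_T \in L^2([0,T]^2,\R^{N\times N})$ solves \eqref{eq:resolventeqkernel}; the commutation $K\star R_T = R_T \star K$ is immediate from the series representation. That $R_T$ is again of continuous and bounded type follows from $R_T = K + K\star R_T$ together with Lemma~\ref{L:staroperation}. Essentially all of this is in \citet[Chapter 9]{GLS:90}, which I would cite rather than reproduce in full.

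For case~\ref{L:resolventvolterraii}, the argument is operator-theoretic. Since $K$ is continuous on the compact set $[0,T]^2$, the induced operator $\boldsymbol K$ on $L^2([0,T],\R^N)$ is Hilbert--Schmidt, hence compact, and by the symmetry $K=K^*$ it is self-adjoint; by Definition~\ref{D:nonnegative} applied to $-\boldsymbol K$, the operator $\boldsymbol K$ is nonpositive, so its spectrum lies in $(-\infty,0]$ and in particular $1 \notin \mathrm{spec}(\boldsymbol K)$. Therefore $(\id - \boldsymbol K)$ is boundedly invertible. To extract an \emph{integral-operator} resolvent, I would set $\boldsymbol R := (\id-\boldsymbol K)^{-1}-\id = \boldsymbol K(\id-\boldsymbol K)^{-1}$; since $\boldsymbol K$ is compact and $(\id-\boldsymbol K)^{-1}$ is bounded, $\boldsymbol R$ is compact, and more precisely Hilbert--Schmidt (the product of a Hilbert--Schmidt operator with a bounded one), so it is induced by a kernel $R_T \in L^2([0,T]^2,\R^{N\times N})$. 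The identities $\boldsymbol R = \boldsymbol K + \boldsymbol K \boldsymbol R = \boldsymbol K + \boldsymbol R \boldsymbol K$ translate, at the kernel level, into \eqref{eq:resolventeqkernel}, which is exactly \eqref{eq:integralreso}.

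The main obstacle is the bookkeeping in case~\ref{L:resolventvolterrai}: one must check carefully that the $\star$-products stay inside the class \eqref{assumption:K_stein} (not merely in $L^2([0,T]^2)$) and that the factorial decay in the Neumann series is uniform in $t$, so that the limit kernel genuinely satisfies Definition~\ref{D:kernelvolterra}; the equicontinuity estimate $\lim_{h\to0}\int_0^T|R_T(u+h,s)-R_T(u,s)|^2ds = 0$ is the most delicate point, and it is cleanest to derive it from the fixed-point relation $R_T = K+K\star R_T$ using Lemma~\ref{L:staroperation} rather than term by term. Case~\ref{L:resolventvolterraii} is comparatively routine once one recalls that Hilbert--Schmidt operators form an ideal and correspond bijectively to $L^2$ kernels. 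I would organize the write-up as: (i) recall the Neumann series and cite \citet{GLS:90} for case~\ref{L:resolventvolterrai}, spelling out only the factorial estimate and the appeal to Lemma~\ref{L:staroperation}; (ii) give the short spectral argument for case~\ref{L:resolventvolterraii}.
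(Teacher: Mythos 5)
Your proposal is correct, and for part (i) it is essentially the paper's route: the paper simply invokes \citet[Lemma 9.3.3, Theorem 9.5.5(i)]{GLS:90}, which is exactly the classical Volterra theory you sketch (your factorial estimate does go through under Definition~\ref{D:kernelvolterra} — note that $\int_s^t|K(z,s)|^2\,dz$ is not uniformly bounded in $s$, only integrable, so the bookkeeping must keep that factor under the final $ds$-integration, but this causes no harm — and your observation that the equicontinuity of $R_T$ is best read off the fixed-point relation $R_T=K+K\star R_T$ via Lemma~\ref{L:staroperation} is exactly the right shortcut). For part (ii), however, you take a genuinely different route: the paper applies Mercer's theorem (following \citet[Section 2.1]{jaber2019laplace}), expanding $-K(s,u)=\sum_n\lambda_n e_n(s)e_n(u)^\top$ with $\lambda_n\geq 0$ and writing the resolvent explicitly as $R(s,u)=\sum_n\frac{-\lambda_n}{1+\lambda_n}e_n(s)e_n(u)^\top$, whereas you argue abstractly that $\boldsymbol K$ is Hilbert--Schmidt, self-adjoint and nonpositive, so $1$ lies in the resolvent set, and then recover an $L^2$ kernel for $\boldsymbol R=\boldsymbol K(\id-\boldsymbol K)^{-1}$ from the ideal property of Hilbert--Schmidt operators. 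Both arguments are valid; yours is shorter and does not even use continuity of $K$, but it only produces an $L^2$ resolvent, while the Mercer expansion additionally yields continuity and the explicit eigenvalue formula, which the paper actually exploits later (e.g.\ in the bound on $R^\Theta_t$ in the proof of Lemma~\ref{L:lemmacondtheta1}); for the lemma as stated, your weaker conclusion suffices.
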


\begin{proof}
	\ref{L:resolventvolterrai}  follows from  \citet[Lemma 9.3.3, Theorem 9.5.5(i)]{GLS:90}.  \ref{L:resolventvolterraii} follows from an application of Mercer's theorem, see \citet[Section 2.1]{jaber2019laplace}. 
\end{proof}


\subsection{Proof of Lemma~\ref{L:Psi}}\label{AppendixL:Psi}
Fix $t\leq T$. We start by proving that $\boldsymbol{\Psi_t}$ is well defined and is a bounded linear operator from $L^2\left([0,T],\R^N\right)$ to $L^2\left([0,T],\R^N\right)$.  First, since $K$ is a Volterra kernel of continuous and bounded type in $L^2$, so is $\hat K$, and Lemma \ref{L:resolventvolterra}-\ref{L:resolventvolterrai}  yields the existence of its resolvent $\hat{R}$ such that 
\begin{align}
\label{hyp:R_hat}
\sup_{s\leq T} \int_{0}^T|\hat{R}(s,u)| ds <\infty, &&\sup_{u\leq T} \int_{0}^T|\hat{R}(s,u)| du <\infty.
\end{align}
In particular, denoting  by $\boldsymbol{\hat{R}}$ the integral operator induced by  $\hat R$, we obtain that $(\id-\hat {\boldsymbol{K}})$ is invertible with an inverse given by  $(\id-\boldsymbol{\hat{K}})^{-1}=\id+\boldsymbol{\hat{R}}$, recall \eqref{eq:integralreso}. Next, we prove that $\big(\id + 2\Theta \tilde{\boldsymbol{\Sigma}}_t \Theta^\top\big)$ is invertible. It follows from \eqref{def:C_tilde} that
\begin{align}
\boldsymbol{\tilde{\Sigma}}_t = (\id+\boldsymbol{\hat{R}}) \boldsymbol{\Sigma}_t (\id+\boldsymbol{\hat{R}})^{*}= \boldsymbol{\Sigma}_t + \boldsymbol{\Sigma}_t \boldsymbol{\hat{R}}^* + \boldsymbol{\hat{R}} \boldsymbol{\Sigma}_t + \boldsymbol{\hat{R}}\boldsymbol{\Sigma}_t\boldsymbol{\hat{R}}^*.
\end{align}
Whence, $\boldsymbol{\tilde{\Sigma}}_t$ is an integral operator generated by the kernel 
\begin{align}\label{eq:tildekernel}
{\tilde{\Sigma}}_t= {\Sigma}_t + {\Sigma}_t \star\hat{{{R}}}^* + \hat{{{R}}} \star{\Sigma}_t + \hat{{{R}}}\star{\Sigma}_t\star\hat{{{R}}}^*.
\end{align}
Since $K$ satisfies \eqref{assumption:K_stein} and $(U-2C^\top C) \in \S^N_+$, $\Sigma_t$ defined in \eqref{eq:sigmakernel} is clearly a symmetric nonnegative kernel. Combined with \eqref{eq:tildekernel}, we get that ${\tilde{\Sigma}}_t$ is symmetric nonnegative. {Successive} applications of Lemma~\ref{L:staroperation} yield that $(s,u)\mapsto {\tilde{\Sigma}}_t(s,u) $ is continuous. Therefore,   $(-2\Theta\boldsymbol{\tilde{\Sigma}}_t\Theta^\top)$ is symmetric nonpositive and continuous so that  an application of Lemma~\ref{L:resolventvolterra}-\ref{L:resolventvolterraii} yields the existence of its resolvent $R^{\Theta}_t$. In particular, $\big(\id + 2\Theta \tilde{\boldsymbol{\Sigma}}_t \Theta^\top\big)$ is invertible with an inverse given by $(\id +\boldsymbol{R}^{\Theta}_t)$, recall \eqref{eq:integralreso}. Combining the above, we get that $\boldsymbol{\Psi}_t$ is well-defined, and satisfies 
\bes{
	\label{eq:boundary}
	\boldsymbol{\Psi}_t &= -(\id + \boldsymbol{\hat{R}})^* \Theta^\top (\id+\boldsymbol{R}^{\Theta}_t)\Theta(\id+\boldsymbol{\hat{R}})\\
	&= - \Theta^\top \Theta \id  - \boldsymbol{\hat{R}}^* \Theta^\top \Theta  - \Theta^\top \Theta  \boldsymbol{\hat{R}}- \boldsymbol{\hat{R}}^* \Theta^\top \boldsymbol{R}^{\Theta }_t \Theta    - \Theta^\top \boldsymbol{R}^{\Theta }_t  \Theta  \boldsymbol{\hat{R}}\\
	& \quad - \boldsymbol{\hat{R}}^* \Theta^\top  \boldsymbol{R}^{\Theta }_t  \Theta  \boldsymbol{\hat{R}}- \boldsymbol{\hat{R}}^* \Theta^\top \Theta  \boldsymbol{\hat{R}}- \Theta^\top  \boldsymbol{R}^{\Theta }_t \Theta,
}
showing that $\boldsymbol{\Psi}_t$ is a bounded operator.\\

\noindent \ref{L:Psi1}: From \eqref{eq:boundary}, we see that  $(\Theta^\top \Theta \id + \boldsymbol{\Psi}_t)$  is an integral operator whose kernel is of the form
\bes{
	\psi_t&= -{\hat{R}}^* \Theta^\top \Theta  -\Theta^\top \Theta {\hat{R}}- {\hat{R}}^* \Theta^\top \star R^{\Theta }_t  \Theta    - \Theta^\top   R^{\Theta }_t \star  \Theta  {\hat{R}} \\
	&\quad - {\hat{R}}^*  \Theta^\top \star R^{ \Theta }_t  \star \Theta  \hat{R} - {\hat{R}}^*\Theta^\top \star \Theta  {\hat{R}} - \Theta^\top R^{\Theta }_t  \Theta .
}
Then, from  \citet[Lemma C.1]{jaber2019laplace} we get that 
\bes{
	\sup_{t\leq T} \int_{[0,T]^2}|R^{\Theta}_t(s,u)|^2 ds du<\infty,
}
which, combined with \eqref{hyp:R_hat} ensures \eqref{eq:bound_psi_leb}.\\

\noindent \ref{L:Psi2}: Fix $f \in L^2\left([0,T],\R^N\right)$ and $t\leq T$. We first argue that 
\begin{align}\label{eq:bordR}
R^{\Theta}_t(t,.) = 0 \mbox{ and } \hat{R}(s,u)=0, \quad  \mbox{for any }  s<u .
\end{align}
Indeed, since $\hat K$ is a Volterra kernel, its resolvent $\hat R$ is also a Volterra kernel so that $R(s,u)=0$ whenever $s<u$. This, combined with the fact that $\Sigma_t(t,\cdot)=0$ and   \eqref{eq:tildekernel}, yields that $\tilde \Sigma_t(t,\cdot)=0$, so that $R^{\Theta}_t(t,\cdot)=0$  by virtue of the resolvent equation \eqref{eq:resolventeqkernel}. 
Using the relations \eqref{eq:bordR}, we compute
\bes{
	\label{eq:zero_terms}
	\left(\Theta ^\top  \Theta  \boldsymbol{\hat{R}}\right)(f1_t)(t) & = \;  \Theta ^\top  \Theta  \int_0^T \hat{R}(t,s) f(s)1_t(s)ds \; = \;  0, \\
	\left( \Theta^\top \boldsymbol{R}_t^{\Theta} \Theta \right) (f1_t)(t) & = \;  \Theta^\top \int_0^T R^{\Theta}_t(t,s) \Theta f(s)1_t(s)ds \; = \; 0, \\
	\left(\Theta^\top \boldsymbol{R}^{\Theta}_t \Theta \boldsymbol{\hat{R}}\right)(f1_t)(t) & = \;  \Theta^\top \int_0^T \int_0^T R^{\Theta}_t(t,u)\Theta\hat{R}(u,s) f(s)1_t(s) du ds \; = \; 0.
} 
Thus, \eqref{eq:zero_terms} combined with \eqref{eq:boundary} and the resolvent's relations $\boldsymbol{\hat{R}}= \boldsymbol{\hat{K}} + \boldsymbol{\hat{K}}\boldsymbol{\hat{R}}$ and $\boldsymbol{\hat{R}}^* = \boldsymbol{\hat{K}}^* + \boldsymbol{\hat{K}}^*\boldsymbol{\hat{R}}^*$ yield
\bes{
	-({\Theta}^\top {\Theta}\id+\boldsymbol{\Psi}_t) (f 1_t)(t) =& (\boldsymbol{\hat{R}}^* {\Theta}^\top {\Theta}+ \boldsymbol{\hat{R}}^*{\Theta}^\top \boldsymbol{R}^{\Theta} {\Theta}+ \boldsymbol{\hat{R}}^*{\Theta}^\top  \boldsymbol{R}_t^{\Theta} {\Theta} \boldsymbol{\hat{R}}+ \boldsymbol{\hat{R}}^*{\Theta^\top}{\Theta} \boldsymbol{\hat{R}} )(f 1_t)(t) \\
	=& -(\boldsymbol{ \hat{K}}^* \boldsymbol{\Psi}_t)(f1_t)(t) 
}
which proves the second claim \ref{L:Psi2}.\\

\noindent \ref{L:Psi3}: Under \eqref{eq:assumptionkerneldiff1},   \citet[Lemma 3.2]{jaber2019laplace} yields that $t\mapsto \boldsymbol{\Sigma}_t$ is strongly differentiable on $[0,T]$ with a derivative given by $t\mapsto\dot{\boldsymbol{\Sigma}}_t$ induced by the kernel \eqref{eq:diffkernelsigma}. Whence, it follows from  \eqref{def:C_tilde}, that $t\mapsto \boldsymbol{\tilde{\Sigma}}_t$ is also differentiable such that  $\dot{\boldsymbol{\tilde{\Sigma}}}_t=(\id -\boldsymbol{\hat K})^{-1} \dot{\boldsymbol{{\Sigma}}}_t(\id -\boldsymbol{\hat K})^{-*}$. Thus, \eqref{def:riccati_operator} yields that  $t\mapsto \boldsymbol{\Psi}_{t}$ is strongly differentiable with a derivative given by  
\bes{
	\dot{\boldsymbol{\Psi}}_t =& \;  2(\id - \boldsymbol{\hat{K}})^{-*} \Theta^\top  (\id + 2\Theta \tilde{\boldsymbol{\Sigma}}_t\Theta^\top)^{-1}\Theta \dot{\boldsymbol{\tilde{\Sigma}}}_t \Theta^\top (\id + 2\Theta \tilde{\boldsymbol{\Sigma}}_t\Theta^\top)^{-1}   \Theta(\id - \boldsymbol{\hat{K}})^{-1} \\
	=& \; 2\boldsymbol{\Psi}_t \dot{\boldsymbol{\Sigma}}_t  \boldsymbol{\Psi}_t.
}
Finally, evaluating \eqref{eq:sigmakernel} at $t=T$, yields that $\Sigma_T(s,u)=0$ for all $s,u\leq T$, leading to $\boldsymbol{\Sigma}_T=\boldsymbol{0}$ so that $\boldsymbol{\Psi}_T=-\left(\id - \boldsymbol{\hat{K}}\right)^{-*} { \Theta^\top \Theta}\left(\id - \boldsymbol{\hat{K}}\right)^{-1}$. This proves \eqref{eq:riccati_psiBold}.


\subsection{Proof of Lemma~\ref{L:lemmacondtheta1}} \label{seclemma1}
We start with a  lemma to bound the kernel  $\tilde \Sigma$.
\begin{lemma}
\label{l:bound_sig_bar}
Let $f(\Theta) = D-2\eta C^\T \Theta$ and assume that $|f(\Theta)| \times \|K\|_{L^2([0,T]^2)}^2 < 1$. Then there exists a constant $c>0$ such that 
    \bes{
    \label{eq:bound_sigma_tilde}
        \sup_{t\leq T} \| \tilde{\Sigma}_t \|^2_{L^2([0,T]^2)} \leq {c (1 +  \hat \kappa(\Theta))},
    }
    where $ \hat \kappa$ is defined as 
    \bes{
        \label{eq:def_k_hat}
         \hat \kappa(\Theta) =  \left(\frac{|f(\Theta)| \times \|K\|_{L^2([0,T]^2)}^2}{ 1 -|f(\Theta)| \times \|K\|_{L^2([0,T]^2)}^2} \right)^4.
    }
\end{lemma}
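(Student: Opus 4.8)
The plan is to estimate $\tilde\Sigma_t$ through its resolvent representation. Recall from \eqref{eq:tildekernel} that $\tilde\Sigma_t=\Sigma_t+\Sigma_t\star\hat R^{*}+\hat R\star\Sigma_t+\hat R\star\Sigma_t\star\hat R^{*}$, where $\hat R$ is the resolvent of $\hat K=Kf(\Theta)$ and $\Sigma_t$ is the kernel in \eqref{eq:sigmakernel}. Since the $\star$-product is submultiplicative for the $L^{2}([0,T]^{2})$-norm --- exactly the Cauchy--Schwarz estimate that makes $\star$ well defined --- and $\|\hat R^{*}\|_{L^{2}([0,T]^{2})}=\|\hat R\|_{L^{2}([0,T]^{2})}$, the first step is to record
\begin{align}
\|\tilde\Sigma_t\|_{L^{2}([0,T]^{2})}\;\leq\;\|\Sigma_t\|_{L^{2}([0,T]^{2})}\,\bigl(1+\|\hat R\|_{L^{2}([0,T]^{2})}\bigr)^{2},\qquad t\leq T.
\end{align}
Squaring this and using the elementary inequality $(1+x)^{4}\leq 8(1+x^{4})$ valid for all $x\geq0$, the lemma reduces to two estimates: (a) a bound on $\sup_{t\leq T}\|\Sigma_t\|_{L^{2}([0,T]^{2})}$ by a constant independent of $t$ and $\Theta$; and (b) the bound $\|\hat R\|_{L^{2}([0,T]^{2})}\leq |f(\Theta)|\,\|K\|^{2}_{L^{2}([0,T]^{2})}\,/\,(1-|f(\Theta)|\,\|K\|^{2}_{L^{2}([0,T]^{2})})$, whose fourth power is precisely $\hat\kappa(\Theta)$ from \eqref{eq:def_k_hat}, so that $\|\hat R\|^{4}\leq\hat\kappa(\Theta)$.

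For (a), set $A=\eta(U-2C^{\top}C)\eta^{\top}$. From \eqref{eq:sigmakernel} and Cauchy--Schwarz in the inner variable $z$, one gets $|\Sigma_t(s,u)|^{2}\leq|A|^{2}\bigl(\int_{0}^{T}|K(s,z)|^{2}dz\bigr)\bigl(\int_{0}^{T}|K(u,z)|^{2}dz\bigr)$ for all $s,u,t\leq T$; integrating over $(s,u)\in[0,T]^{2}$ yields $\sup_{t\leq T}\|\Sigma_t\|^{2}_{L^{2}([0,T]^{2})}\leq|A|^{2}\,\|K\|^{4}_{L^{2}([0,T]^{2})}<\infty$, a constant depending only on $\eta,U,C$ and $K$. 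For (b), since $K$ is a Volterra kernel of continuous and bounded type in $L^{2}$, so is $\hat K=Kf(\Theta)$; its resolvent $\hat R$ exists by Lemma~\ref{L:resolventvolterra}-\ref{L:resolventvolterrai} and is given by the iterated-kernel (Neumann) series $\hat R=\sum_{n\geq1}\hat K^{\star n}$. Combining the submultiplicativity of $\star$ with the control of $\|\hat K\|$ in terms of $|f(\Theta)|$ and $\|K\|_{L^{2}([0,T]^{2})}$, each additional $\star$-factor contributes at most $|f(\Theta)|\,\|K\|^{2}_{L^{2}([0,T]^{2})}$, so under the standing assumption $|f(\Theta)|\,\|K\|^{2}_{L^{2}([0,T]^{2})}<1$ the series converges geometrically and produces the bound in (b). Plugging (a) and (b) back into the first display then proves the claim with, e.g., $c=8\,|A|^{2}\,\|K\|^{4}_{L^{2}([0,T]^{2})}$.

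The only step that requires genuine care --- everything else being Cauchy--Schwarz and bookkeeping of constants --- is the quantitative control of the iterated kernels $\hat K^{\star n}$, so that the geometric ratio comes out exactly as $|f(\Theta)|\,\|K\|^{2}_{L^{2}([0,T]^{2})}$ and the resulting bound on $\|\hat R\|_{L^{2}}$ is uniform in $t$. As an alternative to estimating the Neumann series directly, one may lean on the general quantitative estimates for resolvents of integral kernels (in the spirit of \citet[Lemma C.1]{jaber2019laplace}) and on the uniform resolvent bounds \eqref{hyp:R_hat} already established in the proof of Lemma~\ref{L:Psi}, which shortcut part of the argument.
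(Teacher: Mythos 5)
Your proposal is correct and follows essentially the same route as the paper's proof: the decomposition $\tilde{\Sigma}_t=\Sigma_t+\hat{R}\star\Sigma_t+\Sigma_t\star\hat{R}^*+\hat{R}\star\Sigma_t\star\hat{R}^*$, the $L^2([0,T]^2)$-submultiplicativity of the $\star$-product, and the Neumann-series geometric bound on $\|\hat{R}\|_{L^2([0,T]^2)}$ whose fourth power is $\hat\kappa(\Theta)$. The only difference is cosmetic: you make explicit the uniform bound $\sup_{t\leq T}\|\Sigma_t\|^2_{L^2([0,T]^2)}\leq |\eta(U-2C^\top C)\eta^\top|^2\,\|K\|^4_{L^2([0,T]^2)}$, which the paper simply absorbs into the constant $c$.
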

\begin{proof}
 Let $\hat{R}$ denote the resolvent kernel of $\hat{K}=K f(\Theta)$ as in the proof of Lemma \ref{L:Psi}. First note that the relation $(\id - \hat{\boldsymbol{K}})^{-1} = \id + \hat {\boldsymbol{R}}$ yields  
    \bes{
        \| \tilde{\Sigma}_t \|^2_{L^2([0,T]^2)} =& \|(\id - \hat{K})^{-1} \star\Sigma_t \star (\id - \hat{K})^{-*} \|^2_{L^2([0,T]^2)}\\
        =& \| \Sigma_t + \hat{R} \star \Sigma_t + \Sigma_t \star  \hat{R} + \hat{R} \star \Sigma_t \star \hat{R} \|^2_{L^2([0,T]^2)} \\
        \leq & 2^{3} \Big(  \|\Sigma_t \|^2_{L^2([0,T]^2)} + \|\hat{R} \star \Sigma_t\|^2_{L^2([0,T]^2)}  \\
        &+ \|\Sigma_t \star  \hat{R}\|^2_{L^2([0,T]^2)} + \|\hat{R} \star \Sigma_t \star \hat{R}\|^2_{L^2([0,T]^2)} \Big).
    }    
      An application of the Cauchy-Schwarz inequality combined with Tonelli's theorem implies  that
    \bes{
        \label{eq:cs}
         \|K \star H \|_{L^2([0,T]^2)}     \leq& \|K \|_{L^2([0,T]^2)} \| H \|_{L^2([0,T]^2)}, \qquad K,H \in L^2([0,T]^2, \R^{N \times N}),
    }
so that 
    \bes{
        \| \tilde{\Sigma}_t \|^2_{L^2([0,T]^2)} \leq & 2^3 \left(  \|\Sigma_t\|^2_{L^2([0,T]^2)} + \|\hat{R} \star \Sigma_t\|^2_{L^2([0,T]^2)} + \|\Sigma_t \star  \hat{R}\|^2_{L^2([0,T]^2)} + \|\hat{R} \star \Sigma_t \star \hat{R}\|^2_{L^2([0,T]^2)} \right) \\
        \leq & 2^3  \|\Sigma_t\|^2_{L^2([0,T]^2)}\left(1 +  \|\hat{R}\|^2_{L^2([0,T]^2)} + \|\hat{R}\|^4_{L^2([0,T]^2)}\right)\\
        \leq & c  \|\Sigma_t\|^2_{L^2([0,T]^2)}\left(1 + \|\hat{R}\|^4_{L^2([0,T]^2)}\right),\\
    }
where $c>0$ is a constant independent of $\Sigma$ and $\hat R$. Thus, to obtain \eqref{eq:bound_sigma_tilde} it is enough to show that $ \|\hat{R}\|^2_{L^2([0,T]^2)} \leq\left(\frac{|f(\Theta)| \times \|K\|^{2 }_{L^2([0,T]^2)}}{ 1 -| f(\Theta)| \times \|K\|^{2 }_{L^2([0,T]^2)} } \right)^2 $. For this, note that applying successive Picard's iteration to $\hat{R} = \hat{K} + \hat{K} \star R $ yields 
    \bes{
    \label{eq:dev_serie}
    \hat{R}(s,u) =\sum_{n=1}^\infty \hat{K}^{\star n}(s,u) = \sum_{n=1}^\infty  (K f(\Theta))^{\star n}(s,u),
    }
    {where $\hat K^{\star n}$ is the $(n)$-fold $\star$-product of $\hat K$ by itself.}
    Combining  \eqref{eq:cs} and \eqref{eq:dev_serie} together with the submultiplicativity of the Frobenius norm  yields
    \bes{
         \|\hat{R}\|^2_{L^2([0,T]^2)} \leq & \sum_{1 \leq n,m \leq \infty} \int_0^T\int_0^T |\left(K(s,u)f(\Theta)\right)^{\star n}| |(K(s,u)f(\Theta))^{\star n}| ds du \\
         \leq & \sum_{1 \leq n,m \leq \infty}| f(\Theta)|^{n+m} \|K^{\star n}\|^2_{L^2([0,T]^2)}  \|K^{\star m}\|^2_{L^2([0,T]^2)} \\
         {=} & \left( \sum_{n=1}^\infty| f(\Theta)|^{n} \|K^{\star n}\|^2_{L^2([0,T]^2)}  \right)^2 \\
         \leq & \left( \sum_{n=1}^\infty| f(\Theta)|^{n} \|K\|^{2 n}_{L^2([0,T]^2)}  \right)^2 \\
         \leq & \left(\frac{|f(\Theta)| \times  \|K\|^{2 }_{L^2([0,T]^2)}}{ 1 -| f(\Theta)| \times \|K\|^{2 }_{L^2([0,T]^2)} } \right)^2.
    }
    This proves the desired inequality on $\hat{R}$ and the claimed inequality \eqref{eq:bound_sigma_tilde} follows.
\end{proof}

We can now complete the proof of Lemma~\ref{L:lemmacondtheta1}.

\vspace{1mm}

\noindent {\textit{Proof of Lemma \ref{L:lemmacondtheta1}.}}
	Fix $s\leq T$ and $\Theta \in \R^{d\times N}$. We first note that 
	\bes{ |\lambda_s|^2 + {\left|Z^1_s\right|^2 + \left|Z^2_s\right|^2} =&  |\Theta g_s(s)|^2 + 4\left|\left(\left( \boldsymbol{\Psi}_{s} \boldsymbol{K}\eta \right)^* g_s\right)(s)\right|^2.    
	}
	Using \ref{L:Psi}-\ref{L:Psi1}, and denoting by $\psi^{\rm op}_s$ the operator induced by the kernel $\psi_s$ there, we write
	\bes{
		\left|\left(\left( \boldsymbol{\Psi}_{s} \boldsymbol{K}\eta \right)^* g_s\right)(s)\right|^2 =& \;  |-((\Theta^\T \Theta \boldsymbol{K} \eta)^*g_s)(s) + ((\psi_s^{\rm op}\boldsymbol{K} \eta)^*g_s)(s)|^2 \\
		= & \; |\textbf{1} + \textbf{2} |^2 \\
		\leq & \;  2 (|\textbf{1}|^2 + |\textbf{2}|^2).
	}
	An application of the Cauchy-Schwarz inequality combined with   \eqref{eq:assumptionkerneldiff1}  leads to 
	\bes{
	    \label{eq:1_bold}
		|\textbf{1}|^2  
		=& \left|- \int_0^T \eta^T K(z,s)^\T \Theta^\T \Theta g_s(z) dz \right|^2 \leq  |\eta|^2 |\Theta \Theta^\T|^2 \sup_{u'\leq T}\int_0^T |K(z,{u'})|^2 du' \int_0^T |g_s(u)|^2 du.    }
	Similarly,
	\bes{
		|\textbf{2}|^2 =& 
		\left(\int_0^T \eta^\T \left(\int_0^T K(r,s)^\T \psi_s(r,z) dr \right) g_s(z) dz \right)^2 \\
		\leq & |\eta|^2 \left( \int_0^T \int_0^T |K(r,s)|^2|\psi_s(r,z)|^2 drdz \right) \left( \int_0^T |g_s(z)|^2 dz \right) \\
		\leq & |\eta|^2  \sup_{u'\leq T} \int_0^T |K(r, {u'})|^2 dr {\left(\int_0^T\int_0^T |\psi_s(r,z)|^2 dr dz\right)}   \left(\int_0^T |g_s(z)|^2 dz \right),
	}
	where we stress that $\psi_s$ is the only term on the right hand side depending on $\Theta$.
	Let us now  show that there exists a constant $c>0$ independant of $\Theta$ such that
	\bes{
		\label{eq:bound_psi_theta}
		\sup_{s \in [0,T]} \int_0^T \int_0^T |\psi_s(r,z)|^2 dr dz \leq  c|\Theta|^2(1 + |\Theta|^4 \hat \kappa(\Theta)),
	}
    where $\hat \kappa$ is defined  as in  \eqref{eq:def_k_hat}.
	Recall from \eqref{eq:boundary} that we have 
	\bes{
		\psi_t&= -{\hat{R}}^* \Theta^\top \Theta  -\Theta^\top \Theta {\hat{R}}- {\hat{R}}^* \Theta^\top \star R^{\Theta }_t  \Theta    - \Theta^\top   R^{\Theta }_t \star  \Theta  {\hat{R}} \\
		&\quad - {\hat{R}}^*  \Theta^\top \star R^{ \Theta }_t  \star \Theta  \hat{R} - {\hat{R}}^*\Theta^\top \star \Theta  {\hat{R}} - \Theta^\top R^{\Theta }_t  \Theta .
	}
	Thus,  recalling \eqref{hyp:R_hat},   there exists a constant $c>0$ independent of $\Theta$ such that
		\bes{
		\label{eq:bound_psi_theta_2}
		\sup_{s \in [0,T]} \int_0^T \int_0^T |\psi_s(r,z)|^2 dr dz \leq c |\Theta|^2 \left(1 + 	\sup_{t \in [0,T]} \int_0^T \int_0^T |R_t^\Theta(s,u)|^2 ds du \right).
	}
		To obtain \eqref{eq:bound_psi_theta},  
		it is enough to show that 
		\bes{
		\label{eq:bound_R}
			\sup_{t \in [0,T]} \int_0^T \int_0^T |R_t^\Theta(s,u)|^2 ds du \leq c  |\Theta|^4  (1 +  \hat \kappa(\Theta)), 
		}
 for some constant $c>0$ not depending on $\Theta$ and $\hat \kappa$ defined in \eqref{eq:def_k_hat}. For this recall that $\boldsymbol{R}_t^\Theta$ is the resolvent of $ -2 \Theta \boldsymbol{\tilde{\Sigma}}_t \Theta^\T$ which implies that $\boldsymbol{R}_t^{\Theta}=(\id+2\Theta \boldsymbol{\tilde \Sigma}_t \Theta^T)^{-1} - \id$.  Since, for each $t\leq T$, $\Theta \boldsymbol{\tilde \Sigma}_t \Theta
	^\T$ is a positive symmetric operator on $L^2([0,T], \R^d )$ induced by a continuous kernel, an application of Mercer's theorem, see \citet[Theorem 1, p.208]{shorack2009empirical}, yields the existence of  a countable orthonormal basis $(e_{t, \Theta}^n)_{n \geq 1}$ of $L^2([0,T], \R^d )$ such   that 
	\bes{
		2\Theta \tilde \Sigma_t(s,u) \Theta = \sum_{n\geq 1 } \lambda^{n}_{t,\Theta} e_{t, \Theta}^n(s)e_{t, \Theta}^n(u)^\T,
	}
	where $\lambda^n_{t,\Theta}\geq 0$, for all $n\geq 1$.
	Consequently
	\bes{
		R_t^\Theta(s,u) = \sum_{n \geq 1} \frac{-\lambda^{n}_{t,\Theta} }{ 1 + \lambda^{n}_{t,\Theta} }e_{t, \Theta}^n(s)e_{t, \Theta}^n(u)^\T, 
	}
	which yields 
	\begin{align}
	\int_0^T  \int_0^T |R^{\Theta}_t(s,u)|^2 ds du &= \displaystyle\sum_{n \geq 1} \frac{(\lambda^{n}_{t,\Theta})^2}{(1+\lambda^{n}_{t,\Theta})^2} \\
	&\leq \displaystyle\sum_{n \geq 1} (\lambda^{n}_{t,\Theta})^2  \; = \;  \int_0^T\int_0^T |2\Theta \Tilde \Sigma_t(s,u) \Theta^\T  |^2 ds du  \\
	&\leq 4 |\Theta|^4  \sup_{t\leq T}\int_0^T\int_0^T |\Tilde \Sigma_t(s,u)|^2 ds du \\
	& \leq {c  |\Theta|^4  (1 +  \hat \kappa(\Theta))},
	\end{align}
	{where the last inequality comes from Lemma \ref{l:bound_sig_bar}.  Consequently, inequality \eqref{eq:bound_R} combined with  \eqref{eq:bound_psi_theta_2} yield inequality \eqref{eq:bound_psi_theta}. Finally, the claimed  bound \eqref{eq:bound_lambda_plus_Z} follows by recollecting inequalities \eqref{eq:bound_psi_theta} and \eqref{eq:1_bold}. }

\subsection{Proof of Lemma~\ref{L:lemmacondtheta2}} \label{seclemma2}
\begin{proof}
{Recalling the decomposition \eqref{eq:decompbarsigma}, 
the process $Z$ admits the following Karhunen-Loeve representation 
    \bes{
    \label{eq:Z_decomp}
        Z(s,u) = \sum_{n \geq 1} \xi_n  e^n(s,u), \qquad s,u \in [0,T]^2,
    }
    where $(\xi_n)_{n\geq 1}$ is a sequence of independent  Gaussian random variables with mean $\mu_n = \langle \mu,e^n\rangle_{L^2([0,T]^2, \R^{2N})}$ and variance $\lambda^n$, for each  $n\in \mathbb N$.
    Now observe that the representation \eqref{eq:Z_decomp} combined with the orthogonality of $(e_n)_{n \geq 1}$ in $L^2([0,T]^2,\R^{2N})$ yields
    \bes{
    \label{eq:Z_norm}
     a\int_0^T\left( |g_s(s)|^2 +  \int_0^T |g_s(u)|^2 du \right)ds &= a\|  Z\|^2_{L^{2}([0,T]^2, \R^{2N})}= \sum_{n\geq 1} a \xi^2_n,
    }
   so that the independence of $(\xi_n)_{n \geq 1}$ leads to 
    \begin{align}
        \label{eq:tr_laplace}
        \E\left[ \exp \left(a \int_0^T \left(|g_s(s)|^2 +  \int_0^T |g_s(u)|^2 du\right)ds \right) \right] = & \E\left[ \exp \left(  \sum_{n \geq 1} a\xi_n^2 \right) \right]  \\
       = & \prod_{n \geq 1}  \E \left[\exp\left(a \xi_n^2 \right)\right]  \\
        = & \prod_{n \geq 1} \frac{e^{\frac{a\mu_n^2}{1 - 2 a\lambda^n}   }}{\sqrt{1 - 2 a\lambda^n}}
    \end{align}
    where the last equality follows from the fact that $\xi_n^2$ is chi-squared distributed and $0 < 1 - 2 a \lambda^1 < 1-2a \lambda^n$ by hypothesis. We now argue that the right hand side  of \eqref{eq:tr_laplace} is finite.  For the denominator, due to  $\sum_{n\geq 1} \lambda^n < \infty$, we obtain that $0<\prod_{n\geq 1}(1-2a\lambda^n)<\infty$. For the numerator, since $\lambda^n\to 0$, as $n\to \infty$, $\left(\frac{1}{1-2a\lambda^n}\right)_{n\geq 1}$ is uniformly bounded by a constant $c>0$  so that an application of Parseval's identity yields
	\begin{align}
	\prod_{n\geq 1}\exp\left(  \frac{a\mu_n^2}{1-2a\lambda^n}\right) & \leq \exp\left( c a \|\mu\|^2_{L^2([0,T]^2, \R^{2N})}  \right)\\
	&= \exp\left( c a  \left( \int_0^T \int_0^T \left(\frac{1}{T^2}|g_0(s)|^2 + |g_0(u)|^2 \right) dsdu \right)  \right)\\
	&= \exp\left( c a \left(T+\frac 1 T\right) \| g_0\|_{L^2([0,T],\R^N)}^2   \right)< \infty.
	\end{align}
	The proof is complete.
}
\end{proof}

\section{Additional proof for the martingale {property}}
For completeness, we adapt  \citet[Lemma 7.3]{AJLP17} to the multi-dimensional setting  to prove that the  local martingale
\begin{align} 
M_t &= \; M_0\, \mathcal  E\Big( -\int_0^t \sum_{i=1}^d \psi^i(T-s)\nu_i \sqrt{V^i_s}dW^i_s \Big). 
\end{align}
is a true martingale.  For this we set $U=\int_0^{\cdot} V_s ds$ and we observe that, thanks to stochastic Fubini's theorem, integrating \eqref{VolSqrt} yields
$$ U^i_t = \int_0^t g^i_0(s)ds + \int_0^t  K_i(t-s) Z^i_s ds    $$ 
with $$ Z^i_t = \int_0^t (DV_s)_i ds + \int_0^t \nu_i \sqrt{V_s^i}dW_s^i.$$
\begin{proof}
 Since $M$ is a nonnegative local martingale, it is a supermartingale by Fatou's lemma. Whence to obtain the true martingality it suffices to show that $\E[M_T]= 1$ for any $T\in\R_+$. To this end, fix $T>0$ and  define the stopping times $\tau_n=\inf\{t\ge0\colon \int_0^{t}V^i_s ds> n \mbox{ for some i}\leq d\}\wedge T$. Novikov's condition, recall that $\psi$ is bounded on $[0,T]$ being continuous, yields that   $M^{\tau_n}=M_{\tau_n \wedge \cdot }$ is a uniformly integrable martingale for each $n$. Whence,
\begin{align}
1=M^{\tau_n}_0 = \E_{\P}\left[ M^{\tau_n}_T \right]=
\E_{\P}\left[ M_T \bm 1_{\tau_n\geq T} \right]+ \E_{\P}\left[ M_{\tau_n} \bm 1_{\tau_n< T} \right],
\end{align}
where we made the dependence of the expectation on $\P$ explicit. 
Since $ \E_{\P}\left[ M_T \bm 1_{\tau_n\geq T} \right] \to \E_{\P}\left[ M_T  \right]$ as $n\to \infty$, by dominated convergence, in order to get that  $\E_{\P}[M_T]=1$, it suffices to  prove that 
\begin{align}\label{eq:girsanovproof1}
\E_{\P}\left[ M_{\tau_n} \bm 1_{\tau_n< T} \right]\to 0, \quad \mbox{as } n\to \infty. 
\end{align}
To this end, since $M^{\tau_n}$ is a martingale, we may define probability measures $\Q^n$ by
\[
\frac{d\Q^n}{d\P} = M^{\tau_n}_{\tau_n}.
\]
By Girsanov's theorem, the process $W^n=(W^{n,1},\ldots,W^{n,d})$ defined by
$$ W^{n,i}= W^i + \int_0^{\cdot} \bm 1_{s\leq \tau_n} \psi^i(T-s)\nu_i \sqrt{V^i_s}ds, \quad i=1,\ldots,d, $$ is a  Brownian motion under $\Q^n$. Furthermore, under $\mathbb Q^n$, we  have
\begin{equation} 
\begin{aligned}
U^i_t &= \int_0^t g^i_0(s)ds  + \int_0^t K_i(t-s)  Z^{n,i}_s ds  \\
Z^{n,i}_t&= \int_0^t ((D V_s)_i -  \bm 1_{s\leq \tau_n} \psi^i(T-s)\nu_i^2 {V^i_s}) ds  + \int_0^t \nu_i \sqrt{V^i_s}dW^{n,i}_s.
\end{aligned}
\end{equation}
and we observe that, due to the boundedness of $\psi$, the drift of $Z^n$ under $\Q^n$ satisfy a linear growth condition in $U$  for some constant $\kappa_{L}$ independent of $n$. An application of the generalized Gr\"onwall inequality for convolution equations would yield the moment bound
\[
\E_{\Q^n}[|U_T|^2 ] \le \eta(\kappa_L,T,K,g_0),
\] 
where $\eta(\kappa_L,T,K,g_0)$ does not depend on $n$, see for instance \citet[Lemma 3.1]{ajL1}.
We then get by an application of Chebyshev's inequality
\begin{align*}
\E_{\P}\left[ M_{\tau_n} \bm 1_{\tau_n\leq T} \right] &= 	 \Q^n(\tau_n< T) \\
&\leq \sum_{i=1}^d \Q^n\left( U^i_T>n \right)\\
&\le  \sum_{i=1}^d  \frac{1}{n^2} \E_{\Q^n}\left[|U^i_T|^2 \right] \\
&=   \frac{1}{n^2} \E_{\Q^n}\left[|U_T|^2 \right]\\ 
&\le \frac{1}{n^2} \eta(\kappa_L,T,K,g_0).
\end{align*}
Sending $n\to \infty$, we obtain \eqref{eq:girsanovproof1}, proving that $M$ is  martingale.	
\end{proof}

\vspace{5mm}

\small

\bibliographystyle{plainnat}
\bibliography{bibl1}
\end{document}